\theoremstyle{plain}
   \newtheorem{teo}{Theorem}
   \newtheorem{coro}[teo]{Corollary}
   \newtheorem{lema}[teo]{Lemma}
   \newtheorem{propo}[teo]{Proposition}
\theoremstyle{definition}
\theoremstyle{remark}
 \newtheorem{obs}{Remark}
\numberwithin{equation}{section}
\newcommand{\R}{\mathbb{R}} 
\newcommand{\N}{\mathbb{N}} 
\newcommand{\norm}[1]{\left\|#1\right\|} 
\newcommand{\supp}{\mathrm{supp}}
\begin{document}

\title[Mixed inequalities for multilinear commutators]{Mixed inequalities for commutators with multilinear symbol}

\author[F. Berra]{Fabio Berra}
\address{CONICET and Departamento de Matem\'{a}tica (FIQ-UNL),  Santa Fe, Argentina.}
\email{fberra@santafe-conicet.gov.ar}

\author[M. Carena]{Marilina Carena}
\address{CONICET and Departamento de Matem\'{a}tica (FIQ-UNL),  Santa Fe, Argentina.}
\email{marilcarena@gmail.com}

\author[G. Pradolini]{Gladis Pradolini}
\address{CONICET and Departamento de Matem\'{a}tica (FIQ-UNL),  Santa Fe, Argentina.}
\email{gladis.pradolini@gmail.com}

\thanks{The authors were supported by CONICET, UNL and ANPCyT}

\subjclass[2010]{42B20, 42B25}

\keywords{Multilinear symbol, commutators, Young functions, Muckenhoupt weights}

\begin{abstract}
	We prove mixed inequalities for commutators of Calder\'on-Zygmund operators (CZO) with multilinear symbols. Concretely, let $m\in\mathbb{N}$ and $\mathbf{b}=(b_1,b_2,\dots, b_m)$ be a vectorial symbol such that each component $b_i\in \mathrm{Osc}_{\mathrm{exp}\, L^{r_i}}$, with $r_i\geq 1$. If  $u\in A_1$ and $v\in A_\infty(u)$ we prove that the inequality
	\[uv\left(\left\{x\in \mathbb{R}^n: \frac{|T_\mathbf{b}(fv)(x)|}{v(x)}>t\right\}\right)\leq C\int_{\mathbb{R}^n}\Phi\left(\|\mathbf{b}\|\frac{|f(x)|}{t}\right)u(x)v(x)\,dx\]
	holds for every $t>0$, where $\Phi(t)=t(1+\log^+t)^r$, with $1/r=\sum_{i=1}^m 1/r_i$.
	
	We also consider operators of convolution type with kernels satisfying less regularity properties than CZO. In this setting, we give a Coifman type inequality for the associated commutators with multilinear symbol. This result allows us to deduce the $L^p(w)$-boundedness of these operators when $1<p<\infty$ and $w\in A_p$. As a consequence, we can obtain the desired mixed inequality in this context.
\end{abstract}

\maketitle

\section{Introduction}
The problem of characterizing the nonnegative functions $w$ for which the Hardy-Littlewood maximal operator $M$ is bounded in $L^p(w)$, for $1<p<\infty$, was first solved by Muckenhoupt in \cite{Muck72} and established the emergence of the well-known $A_p$ classes. 

Some years later, looking for an alternative proof of Muckenhoupt theorem, Sawyer showed in \cite{Sawyer} that the inequality
\[uv\left(\left\{x\in \mathbb{R}: \frac{M(fv)(x)}{v(x)}>t\right\}\right)\leq \frac{C}{t}\int_{\mathbb{R}} |f(x)|u(x)v(x)\,dx\]
holds if $u$ and $v$ are $A_1$ weights, for every positive $t$.
Notice that this inequality can be seen as a generalization of the weak $(1,1)$ type of $M$, which corresponds to the case $v=1$, characterized by $A_1$ weights.
Thus, Muckenhoupt theorem can be obtained by combining the inequality above with interpolation techiques and the Jones factorization theorem. We shall refer to these type of estimates as mixed inequalities because they include two different weights.

A further extension of this inequality was established and proved in \cite{CruzUribe-Martell-Perez}. More precisely, it was shown that if $u$ and $v$ satisfy either $u,v\in A_1$ or $u\in A_1$ and $v\in A_\infty(u)$, then the inequality
 \[uv\left(\left\{x\in \mathbb{R}^n: \frac{|\mathcal{T}(fv)(x)|}{v(x)}>t\right\}\right)\leq \frac{C}{t}\int_{\mathbb{R}^n} |f(x)|u(x)v(x)\,dx\]
holds for every positive $t$ where $\mathcal{T}$ is either $M$ or a Calder\'on-Zygmund operator (CZO). 
More general results were proved in \cite{L-O-P} and in \cite{Berra21} by considering weaker conditions on the weights given by $u\in A_1$ and $v\in A_\infty$. The former involves the operator $M$ and the latter,  the generalized maximal operator $M_\Phi$ associated to a Young function $\Phi$ with additional properties.

In the next sections we state our main results concerning to Calderón-Zygmund and Hörmander type operators, separately. We shall be dealing with a linear operator $T$, bounded on $L^2(\R^n)$ and such that for $f\in L^2$ with compact support we have the representation
\begin{equation}\label{eq: representacion integral del T}Tf(x)=\int_{\R^n}K(x-y)f(y)\,dy ,\quad\quad x\notin \supp f,\end{equation}
where $K$ is a measurable function defined away from the origin. Note that we are assuming this integral representation only for $x\not\in \supp f$.

\subsection*{Commutators of CZO}
Given a locally integrable function $b$ and an operator $T$ as in \eqref{eq: representacion integral del T}, the commutator of $T$ is denoted by $T_b$ or $[b,T]$ and defined by the expression
\begin{equation}\label{eq: conmutador de orden 1}
[b,T]f(x)=bTf(x)-T(bf)(x).
\end{equation}
For $m\in\mathbb{N}$, the higher order commutator of $T$ is given recursively by 
\[T_b^m f =\left[b, T_b^{m-1}f\right].\]

In \cite{Berra-Carena-Pradolini(M)} we proved mixed inequalities for higher order commutators of Calderón-Zygmund operators. More precisely, given $u\in A_1$, $v\in A_\infty(u)$, $b\in\mathrm{BMO}$, $m\in \mathbb{N}$ and a CZO $T$, the inequality
\begin{equation}\label{eq: desigualdad mixta para Tbm OCZ b en BMO}
uv\left(\left\{x\in \mathbb{R}^n: \frac{|T_b^m(fv)(x)|}{v(x)}>t\right\}\right)\leq C\int_{\mathbb{R}^n}\Phi\left(\frac{\|b\|_{\mathrm{BMO}}^m|f(x)|}{t}\right)u(x)v(x)\,dx
\end{equation}
holds for every positive $t$, where $\Phi(t)=t(1+\log^+t)^m$. Notice that condition $v\in A_\infty(u)$ implies that both $uv$ and $v$ belong to $ A_\infty$. In this case many classical tools of Harmonic Analysis, like Calder\'on-Zygmund decomposition, can be used to achieve the desired estimates. On the other hand, when $v=1$ inequality \eqref{eq: desigualdad mixta para Tbm OCZ b en BMO} was proved by Pérez in \cite{Perez95}.

In this paper we study mixed inequalities for commutators associated to a multilinear symbol $\mathbf{b}=(b_1,b_2,\dots,b_m)$, where each $b_i$ is a locally integrable function.  
These operators are defined by
\begin{equation}\label{eq: conmutador de orden m con simbolo multilineal}
T_{\mathbf{b}}f=\left[b_1, T_{\overline{\mathbf{b}}_{1}}f\right]. 
\end{equation}
where $\overline{\mathbf{b}}_{1}=(b_2,\dots,b_m)$ (see Section \ref{seccion: propiedades de Tb} for details). 
In fact, we shall prove that we can define $T_{\mathbf{b}}$ by commuting any component $b_i$ with the corresponding operator $T_{\overline{\mathbf{b}}_{i}}$ (see Corollary~\ref{coro: definicion recursiva de Tb independiente de la componente con que se conmuta} below). If $\mathbf{b}=(b_1)$ we simply write $T_{\mathbf{b}}=T_{b_1}$, as in \eqref{eq: conmutador de orden 1}.

We describe now the space of functions considered for the components of $\mathbf{b}$. Given $r\geq 1$ and $b\in L^1_{loc}$ we say that $b\in\mathrm{Osc}_{\mathrm{exp}\,L^r}$ if
\begin{equation}\label{eq: definicion de Osc_exp_L^r}
\|b\|_{\mathrm{Osc}_{\mathrm{exp}\,L^r}}=\sup_Q \|b-b_Q\|_{\mathrm{exp}\,L^r, Q}<\infty
\end{equation}
where $Q$ denotes any cube in $\mathbb{R}^n$ with sides parallel to the coordinate axes, $b_Q$ is the average of $b$ over $Q$ and $\|g\|_{\varphi,Q}$ denotes a $\varphi$-Luxemburg type average (see section below for details). Given $\mathbf{b}=(b_1,\dots,b_m)$ where $b_i\in \mathrm{Osc}_{\mathrm{exp}\,L^{r_i}}$ and $r_i\geq 1$ for every $1\leq i\leq m$, we will denote 
\[\|\mathbf{b}\|=\prod_{i=1}^m \|b_i\|_{\mathrm{Osc}_{\mathrm{exp}\,L^{r_i}}}.\]

We are now in a position to state our main result concerning to mixed inequalities for $T_{\mathbf{b}}$. 

\begin{teo}\label{teo: desigualdad mixta para Tb caso CZO}
	Let $m\in \mathbb{N}$, $r_i\geq 1$ for $1\leq i\leq m$ and $1/r=\sum_{i=1}^m1/r_i$. Let $\Phi(t)=t(1+\log^+t)^{1/r}$ and $\mathbf{b}=(b_1,b_2,\dots,b_m)$ a multilinear symbol such that each $b_i\in \mathrm{Osc}_{\mathrm{exp}\,L^{r_i}}$. If $u\in A_1$, $v\in A_\infty(u)$ and $T$ is a CZO, then there exists a positive constant $C$ such that the inequality
	\[uv\left(\left\{x\in \mathbb{R}^n: \frac{|T_\mathbf{b}(fv)(x)|}{v(x)}>t\right\}\right)\leq C\int_{\mathbb{R}^n}\Phi\left(\|\mathbf{b}\|\frac{|f(x)|}{t}\right)u(x)v(x)\,dx,\]
	holds for every $t>0$.
\end{teo}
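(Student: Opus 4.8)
The plan is to follow the Calderón–Zygmund machinery adapted to the mixed setting, as in \cite{Berra-Carena-Pradolini(M)}, replacing the single symbol $b\in\mathrm{BMO}$ by the multilinear symbol $\mathbf{b}$. First I would reduce to the case $t=1$ by homogeneity (replacing $f$ by $f/t$), and I would assume $f\geq 0$ and $f$ bounded with compact support by a density argument, so that the Calderón–Zygmund decomposition is available. Since $v\in A_\infty(u)$ implies $v\in A_\infty$ and $uv\in A_\infty$, one may perform a Calderón–Zygmund decomposition of $f$ at height $1$ with respect to the measure $v\,dx$: this yields a family of pairwise disjoint cubes $\{Q_j\}$ and the standard splitting $f=g+h$, where $g$ is the "good" part (bounded by a constant times $1$ in the $v$-a.e. sense) and $h=\sum_j h_j$ is the "bad" part supported on $\Omega=\bigcup_j Q_j$ with mean zero (with respect to $v$) on each $Q_j$. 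One controls the good part through an $L^2(v)$ or $L^s(v)$ estimate for $T_{\mathbf{b}}$ together with the pointwise control of $g$, and one controls the contribution of $h$ on the enlarged set $\widetilde\Omega=\bigcup_j 2Q_j$ directly from the measure estimate $uv(\widetilde\Omega)\lesssim \int \Phi(\|\mathbf{b}\||f|)uv$.

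The crux is the estimate away from $\widetilde\Omega$, i.e.\ bounding
\[
uv\Bigl(\Bigl\{x\notin\widetilde\Omega:\ \tfrac{|T_{\mathbf{b}}(hv)(x)|}{v(x)}>c\Bigr\}\Bigr).
\]
Here I would expand the commutator. By the recursive definition \eqref{eq: conmutador de orden m con simbolo multilineal} and the identity for commutators with multilinear symbol (analogous to the standard $m$-fold expansion), for $x\notin \supp(h_jv)$ one writes $T_{\mathbf{b}}(h_jv)(x)$ as a sum over subsets $\sigma\subseteq\{1,\dots,m\}$ of terms of the form $c_\sigma\,\bigl(\prod_{i\in\sigma}(b_i(x)-(b_i)_{Q_j})\bigr)\,T_{\overline{\mathbf{b}}_\sigma}\bigl((\prod_{i\notin\sigma}(b_i-(b_i)_{Q_j}))\,h_jv\bigr)(x)$, exploiting that each $h_j$ has $v$-mean zero on $Q_j$ so that we may subtract the constants $(b_i)_{Q_j}$ freely inside the innermost operator and use the kernel cancellation $\int_{Q_j} K(x-y)\,h_j(y)v(y)\,dy=0$. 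Summing over $j$ and integrating in $x$ against $u$, the terms with $\sigma=\emptyset$ are handled by the Hörmander regularity of $K$ (the CZO smoothness), producing an extra factor that is absorbed; the terms with $\sigma\neq\emptyset$ require estimating, for $x\notin 2Q_j$,
\[
\int_{Q_j}\frac{|K(x-y)|}{\ \ }\Bigl|\prod_{i\notin\sigma}\bigl(b_i(y)-(b_i)_{Q_j}\bigr)\Bigr|\,h_j(y)v(y)\,dy,
\]
and then multiplying by $\prod_{i\in\sigma}|b_i(x)-(b_i)_{Q_j}|$ and integrating $u(x)\,dx$ over $x\notin 2Q_j$. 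The key tools here are: the generalized Hölder inequality in Orlicz spaces to split the product of oscillations (each $b_i-(b_i)_{Q_j}$ lies in $\exp L^{r_i}$ locally, with norm controlled by $\|b_i\|_{\mathrm{Osc}_{\mathrm{exp}\,L^{r_i}}}$), controlling the resulting $L\log^{1/r_i}L$-type averages; the fact that $u\in A_1$ so that $u(x)\lesssim \inf_{Q_j}u$ for $x$ close to $Q_j$ and dyadic-annulus summation in the far region; and the pointwise domination of the innermost quantities by iterated maximal operators of the form $M_{L(\log L)^{1/r}}(h_jv)$. The target estimate then follows from the weak-type bound for the relevant Orlicz maximal operator against the pair $(u, v)$ — concretely $\int M_{L(\log L)^{1/r}}(hv)\,u\lesssim \int \Phi(\|\mathbf{b}\||f|)uv$ plus the $L^1(v)$ bound on $h$ — exactly the mechanism used in the $\mathrm{BMO}$ case, now with $m$ replaced by $1/r=\sum 1/r_i$ in the exponent of the logarithm.

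The main obstacle I anticipate is the bookkeeping of the multilinear expansion combined with the two distinct Orlicz scales: one must verify that the generalized Hölder inequality indeed yields the composite Young function $\bar\Phi(t)\sim t(\log^+ t)^{1/r}$ (not something larger) when one multiplies $m$ factors from $\exp L^{r_i}$ against the single $L^1$-type factor $h_jv$, i.e.\ that $\prod_{i=1}^m(\exp L^{r_i})$ pairs correctly with $L(\log L)^{1/r}$ via $1/r=\sum 1/r_i$; this is where the precise choice of $\Phi$ in the statement comes from. The second delicate point is handling the "diagonal" terms $\sigma=\{1,\dots,m\}$, where no kernel regularity is available and one instead relies on a weak-type endpoint estimate for $M_{\exp L^{r_1}}\circ\cdots$ type compositions together with the $A_1$ property of $u$ and the $A_\infty(u)$ property of $v$ to sum the local contributions $\sum_j \int_{Q_j}\cdots$ back into a single integral of $\Phi(\|\mathbf b\||f|)uv$ over $\mathbb R^n$. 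Once these two points are in place, the assembly of the good part, the bad part on $\widetilde\Omega$, and the bad part off $\widetilde\Omega$ reproduces the argument of \eqref{eq: desigualdad mixta para Tbm OCZ b en BMO} verbatim, giving the claimed inequality for every $t>0$.
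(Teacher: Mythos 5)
Your overall strategy --- Calder\'on--Zygmund decomposition with respect to $v\,dx$, splitting into good/bad parts, and handling the bad part off the dilated cubes by expanding the commutator and exploiting kernel smoothness, $u\in A_1$, and generalized H\"older in Orlicz spaces --- matches the paper's approach. However, there are two concrete problems with your proposal.

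First, the expansion identity you write down is not a valid identity. You propose
\[
T_{\mathbf{b}}(h_jv)(x)=\sum_{\sigma\subseteq\{1,\dots,m\}}c_\sigma\,\Bigl(\prod_{i\in\sigma}(b_i(x)-(b_i)_{Q_j})\Bigr)\,T_{\overline{\mathbf{b}}_\sigma}\Bigl(\Bigl(\prod_{i\notin\sigma}(b_i-(b_i)_{Q_j})\Bigr)h_jv\Bigr)(x),
\]
with products of oscillations at $x$ multiplying lower-order \emph{commutators}. Already for $m=1$ this cannot hold: the only two terms would be $(b_1(x)-\lambda_1)T(h_jv)(x)$ and $T_{b_1}((b_1-\lambda_1)h_jv)(x)$, and the latter expands to $(b_1(x)-\lambda_1)T((b_1-\lambda_1)h_jv)(x)-T((b_1-\lambda_1)^2h_jv)(x)$, which does not linearly combine with the first term to give $T_{b_1}(h_jv)(x)=(b_1(x)-\lambda_1)T(h_jv)(x)-T((b_1-\lambda_1)h_jv)(x)$. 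The correct expansion (Proposition~\ref{propo: Tb en termino de conmutadores con simbolos de menor orden} in the paper) produces exactly one term with the full product $\prod_i(b_i(x)-\lambda_i)$ outside a plain $T$, one term with plain $T$ acting on the full product times $h_jv$, and a sum of lower-order commutator terms $T_{\mathbf b_\sigma}\bigl((\prod_i(b_i-\lambda_i)^{\bar\sigma_i})h_jv\bigr)$ with \emph{no} external product of oscillations at $x$. Your miswritten identity propagates: the ``$\sigma=\emptyset$'' term you say is handled by kernel regularity would be the full commutator $T_{\mathbf{b}}$, to which the smoothness of $K$ is of no help.

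Second, and related, you do not set up the argument as an induction on $m$, whereas this is precisely how the paper disposes of the lower-order commutator terms $T_{\mathbf b_\sigma}$: the inductive hypothesis, applied with the Orlicz scale $\Phi_\sigma(t)=t(1+\log^+t)^{1/r_\sigma}$ where $1/r_\sigma=\sum_{i:\sigma_i=1}1/r_i$, gives exactly the $\Phi$-integral bound after absorbing the residual $\prod_{i:\sigma_i=0}|b_i-\lambda_{i,j}|$ factors via generalized H\"older. You instead appeal to a weak-type estimate for $M_{L(\log L)^{1/r}}$ with respect to $(u,v)$; this is not what the paper uses in the CZO case, and you do not justify it. The remaining term --- $T$ applied to the full product times $h_jv$ --- is handled in the paper by the mixed weak-$(1,1)$ estimate for $T$ itself (Theorem~\ref{teo: desigualdad mixta para T}), a tool that does not appear in your outline. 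Without the correct expansion and without the induction, the argument as sketched does not go through.
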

Observe that if we consider $b_i=b\in \mathrm{Osc}_{\mathrm{exp}\,L^{1}} $ for every $1\leq i\leq m$, then we get \eqref{eq: desigualdad mixta para Tbm OCZ b en BMO}, since $\mathrm{Osc}_{\mathrm{exp}\,L^{1}}=\mathrm{BMO}$.

\subsection*{Commutators of operators of Hörmander type}
Let $T$ be a linear operator as in \eqref{eq: representacion integral del T}. We shall consider kernels $K$ with less regularity properties associated to a Young function $\varphi$, that will be called $H_{\varphi,m}$, where $m\in \mathbb{N}$ and $H_{\varphi,0}=H_{\varphi}$ (for further details see Section~\ref{seccion: preliminares}). 

In this section we state mixed inequalities involving the commutators of these type of operators with multilinear symbol $\mathbf{b}=(b_1, \dots, b_m)$. Our main result is contained in the following theorem.
\begin{teo}\label{teo: desigualdad mixta para Tb multilineal Hormander}
	Let $1<q<\infty$ and $q^2/(2q-1)<\beta<q$. Assume that $u\in A_1\cap \mathrm{RH}_s$ for some $s>1$ and $v^{\alpha}\in A_{(q/\beta)'}(u)$, with $\alpha=\beta(q-1)/(q-\beta)$. Let $m\in\mathbb{N}$, $r_i\geq 1$ for every $1\leq i\leq m$, $1/r=\sum_{i=1}^m 1/r_i$ and $\mathbf{b}=(b_1,b_2,\dots,b_m)$ where $b_i\in \mathrm{Osc}_{\mathrm{exp}\, L^{r_i}}$, for $1\leq i\leq m$. If $T$ is an operator with kernel $K\in H_{\phi,m}$,  where $\bar{\eta}^{-1}(t)\phi^{-1}(t)(\mathrm{log}\, t)^{1/r}\lesssim t$ for $t\geq e$ and $\bar\eta\in B_\rho$ for every $\rho\geq \min\{\beta, s\}$, then there exists $C>0$ such that the inequality
	\[uv\left(\left\{x\in\mathbb{R}^n: \left|\frac{T_{\mathbf{b}}(fv)(x)}{v(x)}\right|>t\right\}\right)\leq C\int_{\mathbb{R}^n} \Phi\left(\frac{\|\mathbf{b}\||f(x)|}{t}\right)u(x)v(x)\,dx\]
	holds for every positive $t$, and where $\Phi(t)=t(1+\log^+t)^{1/r}$.
\end{teo}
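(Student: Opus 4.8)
The plan is to follow the general strategy for mixed inequalities à la Cruz-Uribe--Martell--Pérez, reducing everything to a Coifman type estimate for $T_{\mathbf{b}}$ in terms of an appropriate Orlicz maximal operator, together with the structural properties of $A_1$ weights and the $A_\infty(u)$ condition. First I would normalize $\|\mathbf{b}\|=1$ and, after a homogeneity argument, reduce to $t=1$; I would also assume $f\geq 0$ and $f$ bounded with compact support by a standard density/truncation argument. The key ingredient is the Coifman type inequality for commutators of Hörmander operators with multilinear symbol (proved earlier in the paper), which gives, for every $w\in A_\infty$,
\[
\int_{\R^n}|T_{\mathbf{b}}g(x)|^p w(x)\,dx \lesssim \int_{\R^n}\left(M_{\bar\eta}\circ M_{L(\log L)^{1/r}}\,g(x)\right)^p w(x)\,dx,
\]
or the corresponding weak-type/distributional form, where $M_{\bar\eta}$ is the Orlicz maximal operator associated with the complementary-type function coming from $K\in H_{\phi,m}$ and the hypothesis $\bar\eta^{-1}(t)\phi^{-1}(t)(\log t)^{1/r}\lesssim t$; this is exactly what lets the symbol oscillation $\mathrm{Osc}_{\mathrm{exp}\,L^{r_i}}$ interact with the kernel regularity to produce the logarithmic bump $L(\log L)^{1/r}$. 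From this, via the $B_\rho$ condition on $\bar\eta$ (which makes $M_{\bar\eta}$ bounded on $L^\rho$ for $\rho\geq\min\{\beta,s\}$) the problem is transferred to controlling $M_{L(\log L)^{1/r}}$, i.e. essentially $M_\Phi$ with $\Phi(t)=t(1+\log^+t)^{1/r}$.

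Next I would run the Cruz-Uribe--Martell--Pérez machinery adapted to this setting. Write $g=fv$ and split according to the Calderón-Zygmund decomposition of $\Phi(f)$ at level $1$ associated to the weight $uv\in A_\infty$ (recall $v\in A_\infty(u)\Rightarrow uv\in A_\infty$), obtaining a "good" part $f_1$ bounded by an absolute constant and a "bad" part $f_2=\sum_j f_{2,j}$ supported on disjoint cubes $Q_j$ with controlled averages. For the good part, one uses $L^\rho(uv)$ boundedness — available because $u\in A_1\cap\RH_s$ forces $uv$ into a Muckenhoupt class compatible with the exponents $q/\beta$, $\beta$, $s$ through the interpolation exponent $\alpha=\beta(q-1)/(q-\beta)$ — combined with the Coifman estimate and boundedness of $M_{\bar\eta}$ and $M_\Phi$ on the relevant $L^\rho$. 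For the bad part, one exploits the cancellation $\int f_{2,j}=0$ and the smoothness of the kernel away from the support, together with the symbol estimates in $\mathrm{Osc}_{\mathrm{exp}\,L^{r_i}}$, to push the singular integral estimate outside a dilate $\widetilde{Q}_j$ of each $Q_j$; the set $\bigcup_j\widetilde{Q}_j$ has $uv$-measure controlled by $\int\Phi(f)uv$ because $uv\in A_\infty$, and the contribution outside is handled by a pointwise bound of $T_{\mathbf{b}}f_2$ in terms of $M_{L(\log L)^{1/r}}f$ type quantities acting on the $Q_j$'s. The interplay $v\in A_\infty(u)$ is used precisely in the way Pérez's lemma on $M_\Phi$ and $A_1$ weights is used: one has the pointwise/integral estimate relating $\int\Phi(\lambda M_\Phi h)\,u\lesssim \int\Phi(c\lambda h)\,u$ for $u\in A_1$, which is where the specific form of $\Phi$ (submultiplicative-type, $\Phi\in$ the correct class) matters.

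The main obstacle I expect is the bad-part estimate: controlling $T_{\mathbf{b}}f_{2,j}$ outside $\widetilde{Q}_j$ requires iterating the commutator structure — decomposing $T_{\mathbf{b}}$ via the recursive definition \eqref{eq: conmutador de orden m con simbolo multilineal} and the Corollary on commuting any component — so that at each stage one peels off a factor $(b_i-(b_i)_{Q_j})$ controlled in $\mathrm{exp}\,L^{r_i}$-average on $Q_j$, while the reduced commutator $T_{\overline{\mathbf{b}}_\sigma}$ acts on $f_{2,j}$; combining the $m$ Hölder-in-Orlicz splittings (generalized Hölder inequality for $\prod \mathrm{exp}\,L^{r_i}$ against $L(\log L)$ spaces) produces exactly the exponent $1/r=\sum 1/r_i$ in the target bump. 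Making the kernel smoothness bookkeeping in $H_{\phi,m}$ match this decomposition — i.e. verifying that the weaker Hörmander condition still suffices after $m$ iterations of peeling off symbol factors, which is where the hypothesis $\bar\eta^{-1}(t)\phi^{-1}(t)(\log t)^{1/r}\lesssim t$ enters — is the delicate technical point. The range restriction $q^2/(2q-1)<\beta<q$ and the weight hypotheses $u\in A_1\cap\RH_s$, $v^\alpha\in A_{(q/\beta)'}(u)$ are there exactly to guarantee all the $L^\rho$ boundedness invoked above holds simultaneously for $\rho\geq\min\{\beta,s\}$, so once the Coifman inequality and the bad-part estimate are in hand, assembling the final bound is routine.
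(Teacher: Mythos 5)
Your proposal captures the broad shape of the argument (Calder\'on--Zygmund split, kernel smoothness for the bad part, generalized H\"older in Orlicz spaces to produce the exponent $1/r$), but it diverges from the paper's proof in ways that leave genuine gaps.

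The paper's proof is by induction on $m$. On the bad part $h=\sum_j h_j$ it applies Proposition~\ref{propo: Tb en termino de conmutadores con simbolos de menor orden} to expand $T_{\mathbf{b}}(h_jv)$ into three kinds of pieces: $T$ acting on $\bigl(\prod_i(b_i-\lambda_{i,j})\bigr)h_jv$; the product $\prod_i(b_i(x)-\lambda_{i,j})$ multiplying $T(h_jv)(x)$; and lower-order commutators $T_{\mathbf{b}_\sigma}$, $0<|\sigma|<m$, acting on partial products of symbol differences times $h_jv$. The third family is handled by the \emph{inductive hypothesis} (the mixed inequality for commutators with fewer symbol components), and the first is handled by the mixed weak-type inequality for the base operator $T$ proved in the H\"ormander setting (Theorem~\ref{teo: desigualdad mixta para T Hormander}). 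Neither of these ingredients appears in your sketch: the ``peeling off factors'' idea is what actually happens inside the proof of the sharp-maximal pointwise estimate (Proposition~\ref{propo: M sharp delta en terminos de conmutadores de orden menor}), not in the machinery that closes the weak estimate. Without invoking Theorem~\ref{teo: desigualdad mixta para T Hormander} and without setting up the recursion on $m$, your bad-part estimate has no visible way to conclude.

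Second, the Coifman estimate you quote is not the one the paper proves. Theorem~\ref{teo: desigualdad tipo Coifman para Tb^m Hormander} gives $\|T_{\mathbf{b}}f\|_{L^p(w)}\lesssim\|\mathbf{b}\|\,\|M_{\bar\eta}f\|_{L^p(w)}$, with a single maximal operator: the hypothesis $\bar\eta^{-1}(t)\phi^{-1}(t)(\log t)^{1/r}\lesssim t$ already forces the logarithmic bump into $\bar\eta$. Your composed form $M_{\bar\eta}\circ M_{L(\log L)^{1/r}}$ is a different statement (it mirrors the CZO/BMO iteration picture) and is not established here; invoking it would require a separate proof. Relatedly, the paper performs the Calder\'on--Zygmund decomposition on $f$ at level $t$ with respect to the measure $v(x)\,dx$, not on $\Phi(f)$ at level $1$ with respect to $uv$; that choice is precisely what makes Lemma~\ref{lema: lema fundamental}, relating $uv(Q_j)/v(Q_j)$ to $\inf_{Q_j}u$, directly applicable in every term. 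Your alternative normalization might be workable, but you would have to redo the bookkeeping rather than import the paper's lemmas wholesale.
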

{\begin{obs}\label{obs: H_{phi,m} implica H_{eta,m}}
		Notice that the hypothesis $K\in H_{\phi,m}$ implies that $K\in H_{\eta,m}$, since we are assuming the relation
		\[\bar{\eta}^{-1}(t)\phi^{-1}(t)(\mathrm{log}\, t)^{1/r}\lesssim t \quad\textrm{ for } \quad t\geq e.\]
\end{obs}}

When we consider a single symbol $b \in \mathrm{BMO}$ the result above was proved by the authors in~\cite{Berra-Carena-Pradolini(M)}. The proof involves the Calderón-Zygmund decomposition combined with the boundedness of $T_b^m$ on $L^p(w)$ for $1<p<\infty$ and $w\in A_p$. As far as we know, this last result is unknown for the commutators of $T$ when $\mathbf{b}$ is a multilinear symbol with components belonging to the spaces $\mathrm{Osc}_{\mathrm{exp}\, L^r}$, $r\geq 1$. We prove this result in the next theorem.

\begin{teo}\label{teo: tipo fuerte de Tb Hormander}
	Let $m\in\mathbb{N}$, $r_i\geq 1$ for every $1\leq i\leq m$ and $1/r=\sum_{i=1}^m 1/r_i$. Let  $1\leq\beta<p$ and $\eta, \phi$ be two Young functions verifying $\bar{\eta}^{-1}(t)\phi^{-1}(t)(\mathrm{log}\, t)^{1/r}\lesssim t$, for every $t\geq e$. Let $T$ be an operator with kernel $K\in H_{\phi,m}$ and $\mathbf{b}=(b_1,b_2,\dots,b_m)$ where $b_i\in \mathrm{Osc}_{\mathrm{exp}\,L^{r_i}}$, for $1\leq i\leq m$. If $\bar\eta\in B_\rho$ for $\rho>\beta$ and $w^p\in A_{p/\beta}$, then there exists a positive constant $C$ such that
	\[\|(T_\mathbf{b}f)w\|_{L^p}\leq C\|\mathbf{b}\|\|fw\|_{L^p},\]
	provided the left-hand side is finite.
\end{teo}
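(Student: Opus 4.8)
The plan is to obtain the weighted $L^p$ bound from a pointwise Coifman-type estimate controlling $T_\mathbf{b}f$ by a composition of maximal operators, and then invoke the known $A_p$ theory for those maximal operators. Concretely, I would first prove (or cite from the companion results in this paper, which should appear in the Hörmander section) an inequality of the form
\[
\int_{\R^n}|T_\mathbf{b}f(x)|^{p_0}w(x)\,dx\le C\|\mathbf{b}\|^{p_0}\int_{\R^n}\left(M_{\Psi}f(x)\right)^{p_0}w(x)\,dx,
\]
valid for $w\in A_\infty$ and some fixed exponent $p_0$, where $M_\Psi$ is the maximal operator associated to a Young function $\Psi$ built from the data: roughly $\Psi$ should be comparable to $\bar\eta$ composed with $t(1+\log^+t)^{1/r}$, so that it absorbs both the regularity deficit of the kernel (the $H_{\phi,m}$ condition, which by the relation $\bar\eta^{-1}(t)\phi^{-1}(t)(\log t)^{1/r}\lesssim t$ forces $K\in H_{\eta,m}$) and the logarithmic loss coming from the $m$ oscillation symbols. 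This is the analogue, for multilinear symbols in $\mathrm{Osc}_{\exp L^{r_i}}$, of the classical Pérez-type good-$\lambda$ or sparse-domination arguments; the key analytic input is the generalized Hölder inequality for Luxemburg averages, which converts the product $\prod_i\|b_i-(b_i)_Q\|_{\exp L^{r_i},Q}\cdot\|\,\cdot\,\|_{\eta,Q}$ into a single $\|\,\cdot\,\|_{\Psi,Q}$ average with $\Psi$ as above.

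Granting such a Coifman estimate, the argument is then structural. Since $w^p\in A_{p/\beta}$, write $w^p=\sigma^{p/\beta}$ appropriately, or more directly: the hypothesis $w^p\in A_{p/\beta}$ with $\beta<p$ guarantees, via the standard equivalence, that there is $p_0$ with $\beta<p_0<p$ and $w^{p}\in A_{p/p_0}$ still, hence $w^{p_0}\in A_\infty$ and moreover $M$ (indeed $M_r$ for $r$ slightly above $\beta$) is bounded on $L^{p}(w^p)$ in the appropriate rescaled sense. The role of $\bar\eta\in B_\rho$ for some $\rho>\beta$ is exactly to ensure that $M_{\bar\eta}$ — and therefore the larger $M_\Psi$ after peeling off the logarithmic factor, which is harmless because $B_\rho$ is stable under multiplication by powers of logarithms — is bounded on $L^{p/\beta}$-type spaces, so that $\|M_\Psi f\cdot w\|_{L^p}\lesssim\|fw\|_{L^p}$ once one knows $w^p\in A_{p/\beta}$: this is precisely the Pérez boundedness criterion for $M_\Psi$ on weighted $L^p$. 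Chaining the Coifman inequality (with the $A_\infty$ weight $w^{p}$, after raising to the power $p/p_0$) with this boundedness of $M_\Psi$ yields $\|(T_\mathbf{b}f)w\|_{L^p}\le C\|\mathbf{b}\|\,\|fw\|_{L^p}$.

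A technical point to handle with care is the a priori finiteness qualification in the statement: the Coifman inequality is only meaningful once the left-hand side is known to be finite, which is why one argues first for, say, bounded $f$ of compact support and a truncated operator, establishes the estimate with constants independent of the truncation, and then removes it by monotone convergence — exactly the device flagged by the phrase ``provided the left-hand side is finite.'' One must also verify that $T_\mathbf{b}$ is well defined on the relevant dense class, which follows from the recursive structure \eqref{eq: conmutador de orden m con simbolo multilineal} together with the $L^2$-boundedness of $T$ and the properties of $T_\mathbf{b}$ developed in Section~\ref{seccion: propiedades de Tb}, and that $M_\Psi f$ is finite a.e.\ when $fw\in L^p$, which again comes from $\bar\eta\in B_\rho$.

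**Main obstacle.** The crux is the Coifman-type pointwise (or integral) inequality for $T_\mathbf{b}$ with a multilinear symbol whose components lie in the different spaces $\mathrm{Osc}_{\exp L^{r_i}}$: one must iterate the commutator structure $m$ times, at each step splitting $b_i-(b_i)_Q$ and controlling the ``far'' contribution using only the Hörmander-type regularity $K\in H_{\phi,m}$, and then assemble the $m$ oscillation factors via the generalized Hölder inequality into the single Young function $\Psi(t)\sim\bar\eta\!\big(t(1+\log^+t)^{1/r}\big)$ — the bookkeeping of which maximal operator dominates which piece, and checking that the resulting $\Psi$ still satisfies a $B_\rho$-type condition compatible with $\beta<p$, is where the real work lies. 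Once that estimate is in hand, the passage to the weighted norm inequality is routine $A_p$–$B_\rho$ machinery.
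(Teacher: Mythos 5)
Your overall architecture --- a Coifman-type integral estimate comparing $T_\mathbf{b}$ to a generalized maximal function, then a Pérez--type $B_\rho$/$A_{p/\beta}$ boundedness criterion for that maximal function --- is exactly the one the paper uses, so the proposal is on the right track. The paper's proof is in fact just two lines: apply the already-established Coifman estimate (Theorem~\ref{teo: desigualdad tipo Coifman para Tb^m Hormander}) with exponent $p$ and weight $w^p\in A_{p/\beta}\subset A_\infty$, then cite \cite{B-D-P}, Theorem~2.5, for $\|(M_{\bar\eta}f)w\|_{L^p}\le C\|fw\|_{L^p}$ under the hypotheses $\bar\eta\in B_\rho$ for some $\rho>\beta$ and $w^p\in A_{p/\beta}$.

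Where you diverge from the paper is in misidentifying the Young function appearing in the Coifman estimate. You propose $M_\Psi$ with $\Psi\sim\bar\eta\bigl(t(1+\log^+t)^{1/r}\bigr)$, on the grounds that the maximal operator must ``absorb'' a logarithmic loss from the $m$ oscillation symbols. But that loss is already priced into the hypothesis $\bar\eta^{-1}(t)\phi^{-1}(t)(\log t)^{1/r}\lesssim t$: since $\varphi_i^{-1}(t)\approx(\log t)^{1/r_i}$ for $\varphi_i(t)=e^{t^{r_i}}-1$ and $\sum 1/r_i=1/r$, this relation is precisely the multi-function generalized Hölder condition with $\varphi_1,\dots,\varphi_m,\phi,\bar\eta$, and Hölder then bounds $\frac{1}{|Q|}\int_Q\bigl(\prod_i|b_i-c_i|\bigr)|K||f|$ by $\bigl(\prod_i\|b_i-c_i\|_{\exp L^{r_i},Q}\bigr)\|K\|_{\phi}\|f\|_{\bar\eta,Q}$, so the maximal operator that emerges on $f$ is $M_{\bar\eta}$ itself. (Your description of Hölder as ``converting a product of Luxemburg norms into a single one'' has the direction reversed: it bounds an integral of a product by a product of Luxemburg norms.) Using $M_\Psi$ with a genuinely larger $\Psi$ would be a weaker Coifman estimate, and you would then have to re-justify the weighted boundedness of $M_\Psi$ rather than citing [B-D-P] for $M_{\bar\eta}$ directly; your remark that $B_\rho$ is ``stable under multiplication by powers of logarithms'' is plausible but would need an argument. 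Similarly, the detour through a fixed exponent $p_0$ and a rescaling of the weight is unnecessary: the Coifman estimate is stated for every $0<p<\infty$ and $w\in A_\infty$, so one applies it once, at exponent $p$, with weight $w^p$. None of these issues is fatal --- the argument can be patched --- but they complicate what in the paper is a clean, two-step deduction.
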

When  $b$ is a single symbol belonging to BMO this result was proved in \cite{LRdlT}. On the other hand, if $T$ is a CZO and $\mathbf{b}$ as above an analogous result is contained in \cite{P-T}. The theorem above is a consequence of a Coifman type estimate that we shall state in Section~{\ref{seccion: caso Hormander}}.

The article is organized as follows. In Section~\ref{seccion: preliminares} we give the previous definitions and basic results. In Section~\ref{seccion: propiedades de Tb} we prove some technical facts concerning commutators with a multilinear symbol $\mathbf{b}$. Finally, in Section~\ref{seccion: caso CZO} and Section~\ref{seccion: caso Hormander} we prove the results for CZO and Hörmander type operators, respectively.

\section{Preliminaries and basic definitions}\label{seccion: preliminares}
In this section we provide the concepts and basic results required for the reading of this work. 

A weight $w$ is a locally integrable  function defined on
$\R^n$, such that $0<w(x)<\infty$ in a.e. $x\in \R^n$. For $1<p<\infty$ the Muckenhoupt $A_p$ class is defined as the set of all weights $w$
for which there exists a positive constant $C$ such that the inequality
\[\left(\frac{1}{|Q|}\int_Q w\right)\left(\frac{1}{|Q|}\int_Q w^{-\frac{1}{p-1}}\right)^{p-1}
\leq C\] 
holds for every cube $Q\subset \R^n$  with sides parallel
to the coordinate axes.  For $p=1$ we say that $w\in A_1$ if there
exists a positive constant $C$ such that
\begin{equation*}
\frac{1}{|Q|}\int_Q w\le C\, \inf_Q w(x),
\end{equation*}
for every cube $Q\subset \R^n$. %
The smallest constant $C$ for which the Muckenhoupt condition holds
is called the characteristic $A_p$-constant of $w$, and denoted by $[w]_{A_p}$.
The $A_\infty$ class is defined by the collection of all the $A_p$ classes.
It is easy to see
that if $p<q$ then $A_p\subseteq A_q$. For further properties and the basic theory of Muckenhoupt weights
we refer the reader to~\cite{javi} and~\cite{GC-RdF}.

Given weights $u$ and $w$ we say $w\in A_p(u)$, $1\leq p\leq \infty$, if the above inequalities hold with the Lebesgue measure replaced by $d\mu(x)=u(x)\,dx$. The corresponding characteristic constants are denoted by $[w]_{A_p(u)}$. The following result about $A_p(u)$ classes was proved in \cite{CruzUribe-Martell-Perez}.

\begin{lema}\label{lema: u en A1 y v en Ap(u) implica uv en Ap}
	If $1\leq p\leq \infty$, $u\in A_1$ and $v\in A_p(u)$, then $uv\in A_p$.
\end{lema}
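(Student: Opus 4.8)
The plan is to treat the three cases $1<p<\infty$, $p=1$, and $p=\infty$ separately, the first carrying essentially all of the work. Throughout, write $u(Q)=\int_Q u$, and recall that the $A_1$ condition on $u$ is equivalent to the pointwise bound $u(Q)/|Q|\le [u]_{A_1}\,u(x)$ for almost every $x\in Q$ and every cube $Q$. Equivalently, $u(x)^{-1}\le [u]_{A_1}\,|Q|/u(Q)$ a.e. on $Q$; this is the device that converts averages taken with respect to $d\mu=u\,dx$ into ordinary Lebesgue averages, which is exactly what is needed to pass from the $A_p(u)$ condition on $v$ to the $A_p$ condition on $uv$.

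For $1<p<\infty$ I would start from the quotient
\[\left(\frac{1}{|Q|}\int_Q uv\right)\left(\frac{1}{|Q|}\int_Q (uv)^{-1/(p-1)}\right)^{p-1}\]
and estimate the two factors. The first factor is rewritten as $\frac{u(Q)}{|Q|}\cdot\frac{1}{u(Q)}\int_Q v\,u\,dx$. For the second, I would use the $A_1$ bound on $u$ to write, for a.e. $x\in Q$,
\[(uv)^{-1/(p-1)}=v^{-1/(p-1)}\,u^{-p/(p-1)}\cdot u\le [u]_{A_1}^{p/(p-1)}\left(\frac{|Q|}{u(Q)}\right)^{p/(p-1)}v^{-1/(p-1)}\,u,\]
so that $\frac{1}{|Q|}\int_Q (uv)^{-1/(p-1)}\le [u]_{A_1}^{p/(p-1)}\bigl(|Q|/u(Q)\bigr)^{p/(p-1)}\frac{1}{|Q|}\int_Q v^{-1/(p-1)}\,u\,dx$. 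Multiplying the estimate for the first factor by the $(p-1)$-th power of the estimate for the second, every power of $|Q|$ cancels and the powers of $u(Q)$ reassemble into precisely the $A_p(u)$ quotient of $v$, yielding $[uv]_{A_p}\le [u]_{A_1}^{p}\,[v]_{A_p(u)}$.

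The case $p=1$ follows from the same splitting: $\frac{1}{|Q|}\int_Q uv\le\bigl([u]_{A_1}\operatorname{essinf}_Q u\bigr)\cdot\frac{1}{u(Q)}\int_Q v\,u\,dx\le [u]_{A_1}\,[v]_{A_1(u)}\,(\operatorname{essinf}_Q u)(\operatorname{essinf}_Q v)$, and since $(\operatorname{essinf}_Q u)(\operatorname{essinf}_Q v)\le\operatorname{essinf}_Q(uv)$ this is the $A_1$ condition for $uv$; here one uses that $u,v>0$ a.e., so that essential infima with respect to $u\,dx$ and to Lebesgue measure coincide. For $p=\infty$ one simply invokes the definition $A_\infty(u)=\bigcup_{p\ge 1}A_p(u)$: if $v\in A_\infty(u)$ then $v\in A_p(u)$ for some finite $p$, and the case just proved gives $uv\in A_p\subset A_\infty$.

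I expect the only delicate point to be the bookkeeping of the exponents of $|Q|$ and $u(Q)$ in the case $1<p<\infty$; once the substitution $u^{-p/(p-1)}\le [u]_{A_1}^{p/(p-1)}(|Q|/u(Q))^{p/(p-1)}$ coming from $u\in A_1$ is made correctly, the remaining factors telescope without further effort. The argument is the one given in \cite{CruzUribe-Martell-Perez}.
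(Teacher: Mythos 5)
Your argument is correct: the exponent bookkeeping in the case $1<p<\infty$ checks out (the product of the first factor with the $(p-1)$-th power of the second indeed telescopes to $[u]_{A_1}^p$ times the $A_p(u)$ quotient of $v$), and the $p=1$ and $p=\infty$ cases are handled properly. The paper itself gives no proof of this lemma — it is quoted from \cite{CruzUribe-Martell-Perez} — and your computation is precisely the standard argument from that reference, so there is nothing further to compare.
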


The following result will be key for some estimates in our main results. The proof can be found in \cite[p. 538]{Berra-Carena-Pradolini(M)}.

\begin{lema}\label{lema: lema fundamental}
	Let $u\in A_1$ and $v$ such that $uv\in A_\infty$. Then there exists a positive constant $C$ such that for every cube $Q$
	\[\frac{uv(Q)}{v(Q)}\leq C\inf_Q u.\] 
\end{lema}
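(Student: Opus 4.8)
The plan is to estimate $uv(Q)=\int_Q uv$ by a layer-cake decomposition over the level sets of $u$ and then to play the $A_\infty$ hypothesis on $uv$ against a reverse H\"older inequality for $u$. Write $a:=\inf_Q u$, which is finite and strictly positive since $u$ is a weight and $u\in A_1$ forces $|Q|^{-1}u(Q)\le[u]_{A_1}a$. For $\lambda>0$ put $\Omega_\lambda:=\{x\in Q:u(x)>\lambda\}$. From $u(x)=\int_0^\infty\mathbf{1}_{\{u(x)>\lambda\}}\,d\lambda$ and Fubini one gets
\[uv(Q)=\int_0^\infty v(\Omega_\lambda)\,d\lambda=a\,v(Q)+\int_a^\infty v(\Omega_\lambda)\,d\lambda,\]
because $\Omega_\lambda=Q$ up to a null set when $\lambda<a$. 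Hence it suffices to bound the tail $\int_a^\infty v(\Omega_\lambda)\,d\lambda$ by a constant multiple of $a\,v(Q)$.

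For the tail I would use three ingredients. First, since $u>\lambda$ on $\Omega_\lambda$, we have $v(\Omega_\lambda)=\int_{\Omega_\lambda}\frac{uv}{u}\le\lambda^{-1}(uv)(\Omega_\lambda)$. Second, as $uv\in A_\infty$, the standard $A_\infty$ characterization gives constants $C_1,\delta>0$ with $(uv)(\Omega_\lambda)\le C_1(|\Omega_\lambda|/|Q|)^{\delta}(uv)(Q)$ for every measurable $\Omega_\lambda\subseteq Q$. Third, $u\in A_1\subset A_\infty$ satisfies a reverse H\"older inequality with some exponent $s>1$, which together with $|Q|^{-1}u(Q)\le[u]_{A_1}a$ and Chebyshev yields $|\Omega_\lambda|\le|Q|(C_2a/\lambda)^{s}$, where $C_2$ depends only on $[u]_{A_1}$ and the reverse H\"older constant.

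The delicate point is that inserting these bounds directly would only bound $uv(Q)$ in terms of a multiple of $uv(Q)$ itself, since no gain is available for $\lambda$ close to $a$, where $\Omega_\lambda$ almost fills $Q$. I would resolve this by an absorption argument: split $\int_a^\infty=\int_a^{Ka}+\int_{Ka}^\infty$ with a threshold $K\ge C_2$ to be chosen. On $[a,Ka]$ use $v(\Omega_\lambda)\le v(Q)$, contributing at most $Ka\,v(Q)$. On $[Ka,\infty)$ the condition $\lambda\ge Ka\ge C_2a$ makes the bound $(|\Omega_\lambda|/|Q|)^{\delta}\le(C_2a/\lambda)^{s\delta}$ legitimate, and combining the three ingredients and integrating gives $\int_{Ka}^\infty v(\Omega_\lambda)\,d\lambda\le\frac{C_1}{s\delta}(C_2/K)^{s\delta}\,uv(Q)$. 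Choosing $K$ so large that $\frac{C_1}{s\delta}(C_2/K)^{s\delta}\le\frac12$ and absorbing the term $\frac12 uv(Q)$ into the left-hand side leaves $uv(Q)\le 2(K+1)\,a\,v(Q)$, i.e.\ the claim with $C=2(K+1)$ depending only on $[u]_{A_1}$ and the $A_\infty$ constants of $uv$. The main obstacle is thus this absorption step, in particular checking that the coefficient $\frac{C_1}{s\delta}(C_2/K)^{s\delta}$ can indeed be forced below $1/2$ by enlarging $K$; the rest is a routine application of the layer-cake formula, the $A_\infty$ characterization, and reverse H\"older.
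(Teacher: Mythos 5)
Your argument is correct. The layer--cake identity $uv(Q)=a\,v(Q)+\int_a^\infty v(\Omega_\lambda)\,d\lambda$ is right, each of the three ingredients (the pointwise bound $v(\Omega_\lambda)\le\lambda^{-1}(uv)(\Omega_\lambda)$, the $A_\infty$ comparison $(uv)(E)\le C_1(|E|/|Q|)^\delta(uv)(Q)$, and the reverse H\"older/Chebyshev estimate $|\Omega_\lambda|\le|Q|(C_2a/\lambda)^s$) is valid, the tail integral computes to $\frac{C_1}{s\delta}(C_2/K)^{s\delta}\,uv(Q)$ as you claim, and the absorption is legitimate because $uv\in A_\infty$ forces $uv$ to be locally integrable, so $uv(Q)<\infty$. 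Since $s\delta>0$ is fixed, taking $K$ large makes the coefficient at most $1/2$, so the step you flag as delicate does go through. That said, the paper does not prove the lemma here but cites \cite[p.~538]{Berra-Carena-Pradolini(M)}, and the argument there (which is the standard one) is considerably shorter and avoids both the reverse H\"older inequality and the absorption: set $a=\inf_Q u$ and $E=\{x\in Q: u(x)\le 2[u]_{A_1}a\}$; the $A_1$ condition and Chebyshev give $|Q\setminus E|\le u(Q)/(2[u]_{A_1}a)\le |Q|/2$, hence $|E|\ge|Q|/2$; the $A_\infty$ property of $uv$ in the form ``sets of relative measure $\ge 1/2$ carry a fixed proportion $\beta$ of the $uv$-mass'' yields $\beta\,uv(Q)\le uv(E)\le 2[u]_{A_1}a\,v(E)\le 2[u]_{A_1}a\,v(Q)$, which is the claim. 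Your route uses the opposite direction of the $A_\infty$ characterization (small sets carry little mass) and compensates with the extra reverse H\"older input on $u$ and the absorption trick; it is a perfectly sound alternative, just heavier than necessary.
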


An important property of  Muckenhoupt weights is the reverse H\"{o}lder condition. Given $1<s<\infty$ we say that $w\in \mathrm{RH}_s$ if there exists a positive constant $C$
 such that for every cube~$Q$
\begin{equation*}
\left(\frac{1}{|Q|}\int_Q w^s(x)\,dx\right)^{1/s}\leq
\frac{C}{|Q|}\int_Q w(x)\,dx.
\end{equation*}
We say $w$ belongs to RH$_\infty$ if
there exists a positive constant $C$ such
that
\begin{equation*}
\sup_Q w\leq\frac{C}{|Q|}\int_Q w,
\end{equation*}
for every $Q\subset \R^n$. 
We denote by $[w]_{\textrm{RH}_s}$ the smallest constant $C$ for
which the conditions above hold. Notice that
$\textrm{RH}_\infty\subseteq \textrm{RH}_s\subseteq \textrm{RH}_q$,
for every $1<q<s$.

We say that $\varphi:[0,\infty)\to [0,\infty]$ is a Young function if it is strictly increasing, convex,  $\varphi(0)=0$ and $\varphi(t)\to\infty$ when $t\to\infty$. Given a Young function $\varphi$
and a Muckenhoupt weight $w$, the generalized maximal
	operator $M_{\varphi,w}$ is defined by
\[M_{\varphi,w} f(x):=\sup_{Q\ni x}\|f\|_{\varphi,Q,w},\]
where $\|f\|_{\varphi,Q,w}$ denotes the weighted Luxemburg average over $Q$ defined by
\begin{equation}\label{eq: luxem norm}
\|f\|_{\varphi,Q,w}:=\inf\left\{\lambda>0:
\frac{1}{w(Q)}\int_Q\varphi\left(\frac{|f|}{\lambda}\right)w\leq
1\right\}.
\end{equation}
When $w=1$ we simply denote $\|f\|_{\varphi,Q,w}=\|f\|_{\varphi,Q}$. We use $\bar{\varphi}$ to denote the
complementary function \label{page: complementary} associated to $\varphi$, defined
for $t\geq 0$ by
\[\bar{\varphi}(t)=\sup\{ts-\varphi(s):s\geq 0\}.\]
It is well known that $\bar{\varphi}$ satisfies
\[t\leq \varphi^{-1}(t)\bar{\varphi}^{-1}(t)\leq 2t,\quad\forall t>0,\]
where $\varphi^{-1}$ denotes the generalized inverse of $\varphi$, defined by
\[\varphi^{-1}(t)=\inf\{s\geq 0: \varphi(s)\geq t\},\]
where we understand $\inf\emptyset=\infty$.

We say that $\varphi$ belongs to $B_p$, $1<p<\infty$, if there exists $c>0$ satisfying
\begin{equation}
\int_c^\infty \frac{\varphi(t)}{t^p}\,\frac{dt}{t}<\infty.
\end{equation}

These classes were introduced in \cite{Perez-95-Onsuf} and it was shown that $\varphi \in B_p$ if and only if $M_{\varphi}$ is bounded on $L^p(\mathbb R^n)$.
Observe that if $1<p\leq q<\infty$, then $B_p\subseteq B_q$.

The following lemma contains a useful fact about $M_\varphi$ and $A_1$ weights. 

\begin{lema}[\cite{Berra-Carena-Pradolini(M)}, Lemma 15]\label{lema: control puntual de MPhi u por u}
	Let $s>1$, $u\in A_1\cap \mathrm{RH}_s$ and $\varphi$ a Young function in $B_s$. Then there exists a positive constant $C$ such that $M_\varphi u(x)\leq Cu(x)$, for almost every $x$.
\end{lema}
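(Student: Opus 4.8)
\textbf{Proof plan for Lemma~\ref{lema: control puntual de MPhi u por u}.}

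The plan is to reduce the generalized Luxemburg average $\|u\|_{\varphi,Q}$ appearing in $M_\varphi u$ to an ordinary $L^s$ average of $u$ over $Q$, and then to invoke successively the reverse H\"older condition $u\in\mathrm{RH}_s$ and the $A_1$ condition on $u$. The key observation is that the hypothesis $\varphi\in B_s$ forces $\varphi$ to grow no faster than $t^s$ in a suitable integrated sense; more precisely, one shows that $\varphi\in B_s$ implies $\|g\|_{\varphi,Q}\le C\,\|g\|_{s,Q}:=C\left(\frac1{|Q|}\int_Q|g|^s\right)^{1/s}$ for every cube $Q$ and every $g$, with $C$ depending only on $s$ and the $B_s$-constant of $\varphi$. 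This is the standard comparison between $M_\varphi$ and $M_s f=(M(|f|^s))^{1/s}$: indeed $M_\varphi$ is bounded on $L^s$ precisely when $\varphi\in B_s$, and a localized (single-cube) version of this boundedness gives exactly the pointwise domination of the Luxemburg average by the $L^s$ average; this can be extracted from the averaging/John-Str\"omberg type arguments in \cite{Perez-95-Onsuf}, or proved directly by splitting $g$ according to its level sets on $Q$ and using $\int_c^\infty \varphi(t)t^{-s}\,dt/t<\infty$.

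Granting this reduction, I would argue as follows. Fix $x$ and a cube $Q\ni x$. Then
\[
\|u\|_{\varphi,Q}\le C\left(\frac{1}{|Q|}\int_Q u^s\right)^{1/s}\le \frac{C}{|Q|}\int_Q u \le C\inf_Q u\le Cu(x)\quad\text{a.e.},
\]
where the first inequality is the $B_s\Rightarrow$ ($L^s$-domination) step just described, the second is the definition of $u\in\mathrm{RH}_s$, the third is the $A_1$ condition on $u$, and the last holds for almost every $x\in Q$ (the exceptional null set being independent of $Q$ once we restrict to, say, Lebesgue points of $u$, or simply absorb it since $\inf_Q u\le \mathrm{ess\,inf}_Q u \le u(x)$ a.e.). Taking the supremum over all cubes $Q\ni x$ yields $M_\varphi u(x)\le Cu(x)$ for a.e. $x$, with $C=C(n,s,[u]_{A_1},[u]_{\mathrm{RH}_s},\varphi)$.

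The main obstacle is making the first step precise, i.e. the passage from the abstract membership $\varphi\in B_s$ to the concrete pointwise bound $\|g\|_{\varphi,Q}\lesssim\|g\|_{s,Q}$ on an arbitrary cube. One clean way is this: by homogeneity it suffices to show that $\|g\|_{s,Q}\le 1$ implies $\frac1{|Q|}\int_Q\varphi(|g|/c_0)\le 1$ for a fixed constant $c_0$. Writing the integral via the distribution function of $|g|$ on $Q$, one has $\frac1{|Q|}\int_Q\varphi(|g|)=\int_0^\infty \varphi'(t)\,\frac{|\{x\in Q:|g(x)|>t\}|}{|Q|}\,dt$, and Chebyshev gives $|\{|g|>t\}\cap Q|/|Q|\le t^{-s}\|g\|_{s,Q}^s\le t^{-s}$; combining this with the elementary inequality $\varphi'(t)\lesssim \varphi(t)/t$ (valid for convex $\varphi$ with $\varphi(0)=0$) and the $B_s$ integrability $\int_1^\infty \varphi(t)t^{-s-1}\,dt<\infty$ controls the tail, while the portion $t\le 1$ contributes at most $\varphi(1)\lesssim 1$ after the normalization. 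Rescaling $g$ by the appropriate constant turns this into the claimed Luxemburg estimate. Once this lemma-within-the-proof is in hand, the chain of inequalities above closes the argument immediately.
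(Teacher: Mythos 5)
Your overall strategy is sound and is essentially the standard proof of the cited Lemma 15: reduce the Luxemburg average to the $L^s$ average using $\varphi\in B_s$, then apply $\mathrm{RH}_s$ and the $A_1$ condition. The chain $\|u\|_{\varphi,Q}\lesssim\|u\|_{s,Q}\lesssim\frac1{|Q|}\int_Qu\lesssim\inf_Qu\le u(x)$ a.e.\ is exactly what is needed, and your handling of the exceptional null set is fine.

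The one genuine error is in your direct proof of the sublemma $\|g\|_{\varphi,Q}\lesssim\|g\|_{s,Q}$: the inequality $\varphi'(t)\lesssim\varphi(t)/t$ is \emph{not} valid for a general convex $\varphi$ with $\varphi(0)=0$. Convexity gives the reverse bound $\varphi'(t)\ge\varphi(t)/t$ (the chord slope from the origin is at most the right-hand derivative), and the inequality you invoke is a $\Delta_2$-type condition that fails for, e.g., $\varphi(t)=e^t-1$. Two easy repairs. (i) Integrate by parts: $\int_1^\infty\varphi'(t)t^{-s}\,dt=-\varphi(1)+\lim_{T\to\infty}\varphi(T)T^{-s}+s\int_1^\infty\varphi(t)t^{-s-1}\,dt$, and note that $B_s$ together with the monotonicity of $\varphi$ forces $\varphi(T)T^{-s}\to0$, since $\varphi(T)T^{-s}\lesssim\int_T^{2T}\varphi(t)t^{-s-1}\,dt\to0$. (ii) More simply, skip the distribution function: the same tail comparison shows $B_s$ implies $\varphi(t)\le Ct^s$ for $t\ge t_0$, hence $\varphi(v)\le\varphi(t_0)+Cv^s$ for all $v\ge0$; normalizing $\|g\|_{s,Q}=1$ gives $\frac1{|Q|}\int_Q\varphi(|g|)\le\varphi(t_0)+C=:A$, and convexity then yields $\|g\|_{\varphi,Q}\le\max(A,1)$. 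With either fix the argument closes.
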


We will be dealing with functions of the type $\Phi(t)=t(1+\log^+t)^\delta$, with $\delta>0$. It is well-known that
\begin{equation}\label{eq: inversa y complementaria de LlogL}
\Phi^{-1}(t) \approx \frac{t}{(1+\log^+t)^{\delta}} \quad\textrm{ and }\quad \bar{\Phi}(t)\approx\left(e^{t^{1/\delta}}-e\right)\mathcal{X}_{(1,\infty)}(t).
\end{equation}
When $\delta=1/r$, $r\geq 1$, we shall refer to the average $\|\cdot\|_{\bar{\Phi}, Q, w}$ as $\|\cdot\|_{\mathrm{exp}\,{L^r}, Q, w}$.

The generalized H\"{o}lder inequality
\begin{equation*}
\frac{1}{w(Q)}\int_Q |fg| w\lesssim  \|f\|_{\varphi,Q,w}\|g\|_{\bar{\varphi},Q,w}
\end{equation*}
holds for every Muckenhoupt weight $w$ and every Young function $\varphi$. More generally, if $\varphi$, $\phi$ and $\eta$ are Young functions such that the inequality 
\begin{equation}\label{eq: holder generalizada}
\phi^{-1}(t)\eta^{-1}(t)\leq C \varphi^{-1}(t)
\end{equation}
holds for every $t \geq t_0 > 0$, then
\begin{equation*}
\|fg\|_{\varphi,Q,w}\leq C \|f\|_{\phi,Q,w}\|g\|_{\eta,Q,w}.
\end{equation*}
We can also replace the cube $Q$ above with any bounded measurable set $E$.

The next lemma is a variant of Lemma~11 in \cite{Berra-Carena-Pradolini(M)}, where it was shown for the case $r=1$. Its proof can be achieved with minor modifications and we will omit it.

\begin{lema}\label{lema: control de norma exp L^r respecto de w por norma exp L^r}
	Let $r\geq1$ and $w\in \mathrm{RH}_s$, for some $s>1$. Then there exists $C>0$ such that
	\[\|f\|_{\mathrm{exp}\,L^r,Q,w}\leq C\|f\|_{\mathrm{exp}\,L^r,Q}\]
	for every cube $Q$.
\end{lema}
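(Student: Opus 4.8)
The plan is to reduce the estimate to an elementary pointwise inequality for the Young function defining the $\mathrm{exp}\,L^r$ average and then exploit the reverse Hölder property of $w$ through the classical Hölder inequality. By \eqref{eq: inversa y complementaria de LlogL} with $\delta=1/r$, and since equivalent Young functions produce comparable Luxemburg averages (weighted and unweighted), we may assume that $\|\cdot\|_{\mathrm{exp}\,L^r,Q,w}$ is the Luxemburg average associated to $\bar\Phi(t)=(e^{t^r}-e)\mathcal{X}_{(1,\infty)}(t)$. By positive homogeneity of the Luxemburg average it suffices to treat an $f$ with $\|f\|_{\mathrm{exp}\,L^r,Q}=1$, the cases $0$ and $\infty$ being trivial; then $\frac{1}{|Q|}\int_Q\bar\Phi(|f|)\le 1$ (the infimum defining the average being attained, by monotone convergence), and in particular $\int_Q\bar\Phi(|f|)\le|Q|$.

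The first step is the pointwise bound $\bar\Phi(t/c_0)\le\bar\Phi(t)^{1/s'}$ for every $t\ge 0$, with $c_0:=(s')^{1/r}$. This is clear when $t\le c_0$, since then the left-hand side is $0$; and for $t>c_0$, writing $a:=t^r>s'$, it reduces after raising to the power $s'$ to $(e^{a/s'}-e)^{s'}\le e^{a}-e$, which in turn follows from the elementary inequality $(x-1)^{s'}\le x^{s'}-1$ (valid for $x\ge 1$, $s'\ge 1$, since both sides agree at $x=1$ and the derivative of the difference is nonnegative) applied with $x=e^{a/s'-1}\ge 1$ and then multiplied by $e^{s'}$, using $e^{s'}\ge e$. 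The second step combines this with Hölder's inequality with exponents $s$ and $s'$, the $\mathrm{RH}_s$ condition on $w$, and $\int_Q\bar\Phi(|f|)\le|Q|$:
\[\frac{1}{w(Q)}\int_Q\bar\Phi\!\left(\frac{|f|}{c_0}\right)w\le\frac{1}{w(Q)}\left(\int_Q\bar\Phi(|f|)\right)^{1/s'}\!\left(\int_Q w^s\right)^{1/s}\le\frac{|Q|^{1/s'}}{w(Q)}\,[w]_{\mathrm{RH}_s}\,|Q|^{1/s-1}w(Q)=[w]_{\mathrm{RH}_s}.\]
Finally, putting $A:=\max\{1,[w]_{\mathrm{RH}_s}\}$ and using the convexity of $\bar\Phi$ together with $\bar\Phi(0)=0$ (so that $\bar\Phi(t/A)\le A^{-1}\bar\Phi(t)$), I would deduce $\frac{1}{w(Q)}\int_Q\bar\Phi\big(|f|/(c_0A)\big)w\le A^{-1}[w]_{\mathrm{RH}_s}\le 1$, hence $\|f\|_{\mathrm{exp}\,L^r,Q,w}\le c_0A$. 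Removing the normalization gives the lemma with $C=(s')^{1/r}\max\{1,[w]_{\mathrm{RH}_s}\}$, uniform in $Q$.

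I do not expect a real obstacle: this is simply the $r\ge 1$ version of the classical $\mathrm{RH}_s$-stability of $\mathrm{exp}\,L$ averages, with the convexity step of the case $r=1$ replaced by the elementary inequality $(x-1)^{s'}\le x^{s'}-1$. The only point deserving a line of care is ensuring the argument is insensitive to the (immaterial) freedom in choosing the Young function representing $\mathrm{exp}\,L^r$ near the origin, which is precisely what the equivalence of Luxemburg norms under equivalent Young functions provides.
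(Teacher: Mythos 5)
Your proof is correct and complete: the pointwise inequality $\bar\Phi(t/(s')^{1/r})\le\bar\Phi(t)^{1/s'}$ is verified correctly via $(x-1)^{s'}\le x^{s'}-1$, and the combination with H\"older's inequality and the $\mathrm{RH}_s$ condition yields the uniform bound $C=(s')^{1/r}\max\{1,[w]_{\mathrm{RH}_s}\}$. The paper omits this proof, referring to the $r=1$ case (Lemma~11 of \cite{Berra-Carena-Pradolini(M)}), and your argument is precisely the ``minor modification'' alluded to there, so no further comparison is needed.
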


The following fact is a well-known property satisfied for $\mathrm{BMO}$ symbols. A proof for the $\mathrm{Osc}_{\mathrm{exp}\, L^r}$ case can be found in \cite{P-T}.   
\begin{lema}\label{lema: promedios sobre cubos dilatados de b acotada por norma osc}
	Let $r\geq 1$ and $b\in \mathrm{Osc}_{\mathrm{exp}\, L^r}$. For every $k\in \mathbb{N}$ and every cube $Q$ we have that
	\[|b_{Q}-b_{2^k Q}|\leq C k \|b\|_{\mathrm{Osc}_{\mathrm{exp}\, L^r}}.\]
\end{lema}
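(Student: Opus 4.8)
\textbf{Proof plan for Lemma~\ref{lema: promedios sobre cubos dilatados de b acotada por norma osc}.}

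The plan is to estimate $|b_Q - b_{2^kQ}|$ by telescoping through the intermediate dilates $Q, 2Q, 4Q, \dots, 2^kQ$ and bounding each one-step difference $|b_{2^{j}Q} - b_{2^{j+1}Q}|$ by a constant multiple of $\|b\|_{\mathrm{Osc}_{\mathrm{exp}\,L^r}}$. First I would write
\[
|b_Q - b_{2^kQ}| \le \sum_{j=0}^{k-1} |b_{2^{j}Q} - b_{2^{j+1}Q}|,
\]
so it suffices to control a single term. For the $j$-th term, observe that $2^{j}Q \subset 2^{j+1}Q$ with $|2^{j+1}Q| = 2^n |2^{j}Q|$, hence
\[
|b_{2^{j}Q} - b_{2^{j+1}Q}| = \left|\frac{1}{|2^{j}Q|}\int_{2^{j}Q}(b - b_{2^{j+1}Q})\right| \le \frac{2^n}{|2^{j+1}Q|}\int_{2^{j+1}Q}|b - b_{2^{j+1}Q}|.
\]

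Next I would pass from the $L^1$-average on the right to the Luxemburg average $\|b - b_{2^{j+1}Q}\|_{\mathrm{exp}\,L^r, 2^{j+1}Q}$. Since the Young function $\Phi$ with $\Phi(t) = t(1+\log^+ t)^{1/r}$ (whose complementary function is, up to equivalence, $\mathrm{exp}\,L^r$ — see \eqref{eq: inversa y complementaria de LlogL}) is convex and satisfies $\Phi(t)\ge t$ for $t\ge 1$, or more directly by the generalized H\"older inequality \eqref{eq: holder generalizada} applied with the pair $(\mathrm{exp}\,L^r, L\log^{1/r}L)$ and using $\|1\|_{\Phi, E} \le C$ for any $E$, one gets the comparison
\[
\frac{1}{|E|}\int_E |g| \le C\,\|g\|_{\mathrm{exp}\,L^r, E}
\]
for every bounded set $E$, with $C$ depending only on $n$ and $r$. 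Applying this with $E = 2^{j+1}Q$ and $g = b - b_{2^{j+1}Q}$, and then invoking the defining property \eqref{eq: definicion de Osc_exp_L^r} of $\mathrm{Osc}_{\mathrm{exp}\,L^r}$, yields
\[
|b_{2^{j}Q} - b_{2^{j+1}Q}| \le 2^n C\,\|b - b_{2^{j+1}Q}\|_{\mathrm{exp}\,L^r, 2^{j+1}Q} \le 2^n C\,\|b\|_{\mathrm{Osc}_{\mathrm{exp}\,L^r}},
\]
where the constant is independent of $j$ and of $Q$. Summing over $0\le j\le k-1$ gives $|b_Q - b_{2^kQ}| \le C k\,\|b\|_{\mathrm{Osc}_{\mathrm{exp}\,L^r}}$, as claimed.

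The only genuine point requiring care is the comparison $\frac{1}{|E|}\int_E |g| \le C\|g\|_{\mathrm{exp}\,L^r, E}$, i.e.\ that the (unweighted) $L^1$-average is dominated by the exponential Luxemburg average; this is where the hypothesis $r \ge 1$ enters, ensuring $\mathrm{exp}\,L^r \subseteq \mathrm{exp}\,L^1 = \mathrm{exp}\,L$ and hence that the average is even finite. It follows from Jensen's inequality together with $\bar\Phi(t)/t \to \infty$ as $t\to\infty$ (so that $\bar\Phi(t)\ge t$ for $t$ large, absorbing a fixed multiplicative constant), or directly from \eqref{eq: holder generalizada}. Everything else — the telescoping and the volume ratio $2^n$ — is routine, and the constant $C$ in the final bound depends only on the dimension $n$ and on $r$.
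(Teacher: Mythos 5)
Your proof is correct and is the standard telescoping argument: the paper itself does not prove this lemma but cites \cite{P-T} for it, and the argument there is essentially the one you give (telescope through the dilates, bound each one-step difference by the volume ratio $2^n$ times the $L^1$-oscillation over the larger cube, and dominate the $L^1$-average by the $\mathrm{exp}\,L^r$ Luxemburg average via generalized H\"older with $\|1\|_{\Phi,E}\leq C$). The only small inaccuracy is your closing remark that $r\geq 1$ is what makes the comparison $\frac{1}{|E|}\int_E|g|\leq C\|g\|_{\mathrm{exp}\,L^r,E}$ work: this comparison holds for any Young function with a constant depending only on that function, so nothing essential hinges on $r\geq1$ there.
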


The next is a purely technical result, which will be useful in some of our estimates. The proof can be performed by induction and we shall omit it.

\begin{lema}\label{lema: producto de sumas}
	Let $a_i$ and $b_i$ real numbers for $1\leq i\leq m$. Then
	\[\prod_{i=1}^m(a_i+b_i)=\sum_{\sigma\in S_m} \prod_{i=1}^ma_i^{\sigma_i}b_i^{\bar\sigma_i}.\]
\end{lema}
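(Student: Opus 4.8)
The plan is to prove the identity by a straightforward induction on $m$, once the right-hand side is read correctly. Here $S_m$ stands for the set of binary vectors $\sigma=(\sigma_1,\dots,\sigma_m)\in\{0,1\}^m$, the bit $\bar\sigma_i:=1-\sigma_i$ is the complement of $\sigma_i$, and we adopt the usual convention $x^0=1$, $x^1=x$. Under this reading each summand $\prod_{i=1}^m a_i^{\sigma_i}b_i^{\bar\sigma_i}$ is precisely the product obtained by picking, for every index $i$, the factor $a_i$ when $\sigma_i=1$ and the factor $b_i$ when $\sigma_i=0$; thus the statement is the familiar fact that distributing the product of the $m$ binomials $a_i+b_i$ yields exactly these $2^m$ mixed monomials.

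First I would dispose of the base case $m=1$, which is just $a_1+b_1=a_1^1 b_1^0+a_1^0 b_1^1$. For the inductive step, assuming the identity for $m-1$, I would factor off the last binomial, insert the inductive hypothesis, and distribute $a_m$ and $b_m$ over the finite sum:
\begin{align*}
\prod_{i=1}^m(a_i+b_i)&=(a_m+b_m)\sum_{\sigma\in S_{m-1}}\prod_{i=1}^{m-1}a_i^{\sigma_i}b_i^{\bar\sigma_i}\\
&=\sum_{\sigma\in S_{m-1}}\Bigl(a_m\prod_{i=1}^{m-1}a_i^{\sigma_i}b_i^{\bar\sigma_i}+b_m\prod_{i=1}^{m-1}a_i^{\sigma_i}b_i^{\bar\sigma_i}\Bigr).
\end{align*}
Identifying $(\sigma,1)$ with the extension of $\sigma\in S_{m-1}$ by a final bit $1$ and $(\sigma,0)$ with its extension by $0$, the first term inside the parentheses equals $\prod_{i=1}^m a_i^{\tau_i}b_i^{\bar\tau_i}$ for $\tau=(\sigma,1)$ and the second for $\tau=(\sigma,0)$. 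Since $\sigma\mapsto(\sigma,\epsilon)$ is a bijection $S_{m-1}\times\{0,1\}\to S_m$, the double sum ranges over every $\tau\in S_m$ exactly once, which is the desired formula.

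I do not expect any genuine obstacle: the content is purely formal, and the only mild care needed is in spelling out the reindexing $S_{m-1}\times\{0,1\}\cong S_m$ and in checking that the exponent convention $x^0=1$ correctly accounts for the ``absent'' factors. As an alternative to induction, one can simply iterate the distributive law $m$ times, recording at the $i$-th step whether $a_i$ or $b_i$ was selected; the resulting record is exactly the vector $\sigma\in S_m$, and this produces the identity directly.
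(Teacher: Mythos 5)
Your induction is correct and is exactly the argument the paper has in mind: the authors state that the lemma "can be performed by induction" and omit the details. Your base case, the factoring of the last binomial, and the reindexing via the bijection $S_{m-1}\times\{0,1\}\to S_m$ fill in precisely what was omitted.
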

 Let $T$ be as in \eqref{eq: representacion integral del T}. Recall that $T$ is a CZO if $K$ is a standard kernel, which means that
$K:\mathbb R^n\backslash\{0\}\to\mathbb C$  satisfies a size condition given by 
\[|K(x)|\lesssim \frac{1}{|x|^n},\]
and the smoothness conditions, usually called Lipschitz conditions,
\begin{equation}\label{eq:prop del nucleo}
	|K(x-y)-K(x-z)|\lesssim \frac{|x-z|}{|x-y|^{n+1}},\quad \textrm{ if } |x-y|>2|y-z|,
\end{equation}
We write $f(t)\lesssim g(t)$ when there exists a positive constant $C$ such that $f(t)\leq Cg(t)$ for every $t$ in the domain.
Throughout the paper, the constant $C$ may change on each occurrence.

We shall also deal with kernels with less regularity properties than \eqref{eq:prop del nucleo}. Given a Young function $\varphi$, we denote
	\[\norm{f}_{\varphi, |x|\sim s}=\norm{f\mathcal{X}_{\{|x|\sim s\}}}_{\varphi, B(0,2s)},\]
	where $|x|\sim s$ means $s<|x|\leq 2s$.
	
	We say that $K$ satisfies the $L^{\varphi}-$H\"{o}rmander condition and we denote it by $K\in H_{\varphi}$ if there exist constants $c\geq 1$ and $C_{\varphi}>0$ such that, for every $y\in \R^n$ and $R>c|y|$
	\begin{equation}\label{eq: condicion_hormander_phi}
		\sum_{k=1}^{\infty}(2^kR)^n\norm{K(\cdot-y)-K(\cdot)}_{\varphi,|x|\sim 2^kR}\leq C_\varphi.
	\end{equation}
	We also say that $K\in H_{\varphi,m}$ for $m\in\N$ if there exist constants $c\geq 1$ and $C_{\varphi,m}>0$ such that the inequality
	\begin{equation}\label{eq: condicion_hormander_phi_m}
		\sum_{k=1}^{\infty}(2^kR)^nk^m\norm{K(\cdot-y)-K(\cdot)}_{\varphi,|x|\sim 2^kR}\leq C_{\varphi,m}.
	\end{equation}
	holds for every $y\in \mathbb{R}^n$ and $R>c|y|$. It is immediate from the definition that $H_{\varphi,m}\subset H_{\varphi,i}$ for every $0\leq i\leq m$.

\section{Some useful facts about $T_{\mathbf{b}}$}\label{seccion: propiedades de Tb}

We devote this section to prove some technical results concerning the representation of commutators with multilinear symbols. Although the most part of them do not rely on specific properties of $T$, we shall assume that $T$ is a linear operator.
We introduce some useful notation. Given $m\in\mathbb{N}$, we denote with $S_m$ the set\label{pag: notacion para Tb}
	\[S_m=\{0,1\}^m=\{\sigma \in \mathbb{R}^m: \sigma=(\sigma_1,\sigma_2,\dots,\sigma_m), \textrm{ where } \sigma_i=0 \textrm{ or } \sigma_i=1\}.\]
	Notice that $S_m$ has exactly $2^m$ elements. We shall write this as $\#S_m=2^m$. For $\sigma\in S_m$ we define
	$|\sigma|=\sum_{i=1}^m\sigma_i$ and $\bar{\sigma}=(\bar{\sigma}_1,\bar{\sigma}_2,\dots,\bar{\sigma}_m)$,
	where
	\[\bar{\sigma}_i=\left\{\begin{array}{ccl}
		0&\textrm{ if }& \sigma_i=1,\\
		1&\textrm{ if }& \sigma_i=0.
	\end{array}
	\right.\]
	
	If $\mathbf{b}=(b_1,\dots,b_m)$ and $\sigma\in S_m$ we denote by $\mathbf{b}_\sigma$ the symbol with $|\sigma|$ components corresponding to those indices $i$ for which $\sigma_i=1$. For example, for $m=4$, $\mathbf{b}=(b_1,b_2,b_3,b_4)$ and $\sigma=(1,0,1,0)$, we have $\mathbf{b}_\sigma=(b_1,b_3)$. When we need to emphasize that $\mathbf{b}_\sigma$ has length $k$ we will write $\mathbf{b}_\sigma^{k}$. Also, we shall use $\overline{\mathbf{b}}_{i}$ to denote the multilinear symbol containing all the components of $\mathbf{b}$ except $b_i$, this is  
	\[\overline{\mathbf{b}}_{i}=(b_1,\dots,b_{i-1},b_{i+1},\dots b_m).\]

\begin{propo}\label{propo: representacion de Tb con oscilaciones medias de bi}
	Let $\mathbf{b}=(b_1,b_2,\dots,b_m)$ and $\lambda_i$ be fixed constants for $1\leq i\leq m$. Then
	\begin{equation}\label{eq: representacion de Tb con oscilaciones medias de bi}
	T_{\mathbf{b}}f(x)=\sum_{\sigma\in S_m}(-1)^{m-|\sigma|}\left(\prod_{i=1}^m \left[b_i(x)-\lambda_i\right]^{\sigma_i}\right)T\left(\left(\prod_{i=1}^m \left[b_i-\lambda_i\right]^{\bar{\sigma}_i}\right)f\right)(x).
	\end{equation}
\end{propo}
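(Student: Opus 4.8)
The plan is to argue by induction on $m$, the length of the multilinear symbol $\mathbf{b}$. For $m=1$ the claim reads $T_{b_1}f = (b_1-\lambda_1)Tf - T((b_1-\lambda_1)f)$, which follows directly from the definition \eqref{eq: conmutador de orden 1} and the linearity of $T$: writing $b_1 = (b_1-\lambda_1) + \lambda_1$, the constant $\lambda_1$ cancels, and the two surviving terms correspond exactly to $\sigma=(1)$ with sign $(-1)^{1-1}=+1$ and $\sigma=(0)$ with sign $(-1)^{1-0}=-1$. For the inductive step I would assume \eqref{eq: representacion de Tb con oscilaciones medias de bi} holds for the symbol $\overline{\mathbf{b}}_{1}=(b_2,\dots,b_m)$ of length $m-1$, apply the definition $T_{\mathbf{b}}f = [b_1, T_{\overline{\mathbf{b}}_1}f] = b_1 T_{\overline{\mathbf{b}}_1}f - T_{\overline{\mathbf{b}}_1}(b_1 f)$ from \eqref{eq: conmutador de orden m con simbolo multilineal}, and expand both occurrences of $T_{\overline{\mathbf{b}}_1}$ using the inductive hypothesis.

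Concretely, write $\tau$ for a generic element of $S_{m-1}$ indexing the components $(b_2,\dots,b_m)$. The inductive hypothesis gives
\[
T_{\overline{\mathbf{b}}_1}g(x)=\sum_{\tau\in S_{m-1}}(-1)^{(m-1)-|\tau|}\left(\prod_{i=2}^m\left[b_i(x)-\lambda_i\right]^{\tau_i}\right)T\left(\left(\prod_{i=2}^m\left[b_i-\lambda_i\right]^{\bar\tau_i}\right)g\right)(x).
\]
Applying this with $g=f$ and multiplying by $b_1(x)$, then applying it with $g=b_1 f$ and subtracting, the key manipulation is to insert $b_1(x) = (b_1(x)-\lambda_1)+\lambda_1$ in the first family of terms and $b_1 = (b_1-\lambda_1)+\lambda_1$ inside the argument of $T$ in the second family. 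The $\lambda_1$-pieces from the two families cancel term by term (for each fixed $\tau$), because in the second family $\lambda_1$ is a constant and can be pulled out of the linear operator $T$. What remains are exactly the terms carrying a genuine factor $[b_1(x)-\lambda_1]$ (from the first family, contributing $\sigma_1=1$) and a genuine factor $[b_1-\lambda_1]$ inside $T$ (from the second family, contributing $\sigma_1=0$, with an extra sign $-1$). Re-indexing: each $\tau\in S_{m-1}$ together with a choice of $\sigma_1\in\{0,1\}$ produces a unique $\sigma=(\sigma_1,\tau)\in S_m$, and one checks the signs match: for $\sigma_1=1$ the sign is $(-1)^{(m-1)-|\tau|}=(-1)^{m-|\sigma|}$ since $|\sigma|=|\tau|+1$; for $\sigma_1=0$ the extra minus gives $-(-1)^{(m-1)-|\tau|}=(-1)^{m-|\tau|}=(-1)^{m-|\sigma|}$ since $|\sigma|=|\tau|$. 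The products over $i$ reorganize to $\prod_{i=1}^m$ as claimed.

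The main obstacle is purely bookkeeping rather than conceptual: one must be careful that $T$ is only linear (not, say, multiplication-friendly) so the constant $\lambda_1$ may be extracted from $T(\lambda_1\,\cdot)$ but $b_1(x)$ outside $T$ cannot be moved inside, and one must track the interplay between the pointwise evaluation at $x$ of the factors $[b_i(x)-\lambda_i]^{\tau_i}$ and the factors $[b_i-\lambda_i]^{\bar\tau_i}$ that sit inside the argument of $T$. I would also note at the end that the statement is independent of which component is commuted first — this is exactly the content of the referenced Corollary on the recursive definition of $T_{\mathbf{b}}$ — so the induction could equally be set up by peeling off $b_m$ or any $b_i$; peeling off $b_1$ is merely the most notationally convenient choice.
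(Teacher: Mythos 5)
Your proposal is correct and matches the paper's proof in essence: both argue by induction on $m$, peel off the component $b_1$ via the definition $T_{\mathbf{b}}f=[b_1,T_{\overline{\mathbf{b}}_1}]f$, apply the inductive hypothesis to $T_{\overline{\mathbf{b}}_1}$, and use linearity of $T$ to cancel the $\lambda_1$-pieces before re-indexing pairs $(\sigma_1,\tau)\in\{0,1\}\times S_{m-1}$ as elements $\sigma\in S_m$. The only superficial difference is directionality — the paper starts from the right-hand side of \eqref{eq: representacion de Tb con oscilaciones medias de bi}, splits it according to $\sigma_1\in\{0,1\}$, and collapses it to $[b_1,T_{\overline{\mathbf{b}}_1}]f$, whereas you expand the definition outward — but the underlying algebra and bookkeeping are identical.
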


\begin{proof}
	We proceed again by induction on $m$. If $m=1$ then $\mathbf{b}=(b_1)$. In this case, the right-hand side of \eqref{eq: representacion de Tb con oscilaciones medias de bi} becomes
	\[\left(b_1(x)-\lambda_1\right)Tf(x)-T\left(\left(b_1-\lambda_1\right)f\right)(x)=b_1(x)Tf(x)-T(b_1f)(x)=T_{\mathbf{b}}f(x),\]
	since $T$ is linear.
	
	Assume now that the representation holds for every multilinear symbol with $k$ components and
		let $\mathbf{b}=(b_1,b_2,\dots,b_{k+1})$. The right-hand side of \eqref{eq: representacion de Tb con oscilaciones medias de bi} can be split in two terms $A+B$, where
	\[A=\sum_{\sigma\in S_{k+1}: \sigma_1=1}(-1)^{k+1-|\sigma|}\left(b_1(x)-\lambda_1\right)\left(\prod_{i=2}^{k+1} \left[b_i(x)-\lambda_i\right]^{\sigma_i}\right)T\left(\left(\prod_{i=2}^{k+1} \left[b_i-\lambda_i\right]^{\bar{\sigma}_i}\right)f\right)(x)\]
	and
	\[B=\sum_{\sigma\in S_{k+1}: \sigma_1=0}(-1)^{k+1-|\sigma|}\left(\prod_{i=2}^{k+1} \left[b_i(x)-\lambda_i\right]^{\sigma_i}\right)T\left(\left(b_1-\lambda_1\right)\left(\prod_{i=2}^{k+1} \left[b_i-\lambda_i\right]^{\bar{\sigma}_i}\right)f\right)(x).\]
	Now, by using the inductive hypothesis, we have that
	\begin{align*}
	A&=\left(b_1(x)-\lambda_1\right)\left[\sum_{\theta\in S_k} (-1)^{k-|\theta|}\left(\prod_{i=2}^{k+1} \left[b_i(x)-\lambda_i\right]^{\theta_i}\right)T\left(\left(\prod_{i=2}^{k+1} \left[b_i-\lambda_i\right]^{\bar{\theta}_i}\right)f\right)(x)\right]\\
	&=(b_1(x)-\lambda_1)T_{\overline{\mathbf{b}}_{1}}f(x).
	\end{align*}
	On the other hand, since $T$ is linear 
	\begin{align*}
	B&=-\sum_{\theta\in S_k}(-1)^{k-|\theta|}\left(\prod_{i=2}^{k+1} \left[b_i(x)-\lambda_i\right]^{\theta_i}\right)T\left(\left(b_1-\lambda_1\right)\left(\prod_{i=2}^{k+1} \left[b_i-\lambda_i\right]^{\bar{\theta}_i}\right)f\right)(x)\\
	&=-\sum_{\theta\in S_k}(-1)^{k-|\theta|}\left(\prod_{i=2}^{k+1} \left[b_i(x)-\lambda_i\right]^{\theta_i}\right)T\left(b_1\left(\prod_{i=2}^{k+1} \left[b_i-\lambda_i\right]^{\bar{\theta}_i}\right)f\right)(x)+\\
	&\quad + \lambda_1\sum_{\theta\in S_k}(-1)^{k-|\theta|}\left(\prod_{i=2}^{k+1} \left[b_i(x)-\lambda_i\right]^{\theta_i}\right)T\left(\left(\prod_{i=2}^{k+1} \left[b_i-\lambda_i\right]^{\bar{\theta}_i}\right)f\right)(x)\\
	&=-T_{\overline{\mathbf{b}}_{1}}(b_1f)(x)+\lambda_1T_{\overline{\mathbf{b}}_{1}}f(x).
	\end{align*}
	Finally, we obtain that 
	\[A+B=b_1(x)T_{\overline{\mathbf{b}}_1}f(x)-T_{\overline{\mathbf{b}}_1}(b_1f)(x)=\left[b_1, T_{\overline{\mathbf{b}}_{1}}f\right](x)=T_{\mathbf{b}}f(x),\]
	which completes the proof.\qedhere
\end{proof}

\begin{obs}
	If we take $\lambda_i=0$ for every $i$ in the result above, we obtain
	\begin{equation}\label{eq: representacion de Tb con b multilineal}
	T_{\mathbf{b}}f(x)=\sum_{\sigma\in S_m}(-1)^{m-|\sigma|}\left(\prod_{i=1}^m b_i^{\sigma_i}(x)\right)T\left(\left(\prod_{i=1}^m b_i^{\bar{\sigma}_i}\right)f\right)(x).
	\end{equation}
\end{obs}

\begin{coro}\label{coro: definicion recursiva de Tb independiente de la componente con que se conmuta}
	Given $1\leq \ell\leq m$ and $\mathbf{b}=(b_1,b_2,\dots,b_m)$ we have that
	\[T_{\mathbf{b}}f(x)=\left[b_\ell,T_{\overline{\mathbf{b}}_{\ell}}f\right](x).\]
\end{coro}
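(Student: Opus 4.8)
The plan is to derive the identity directly from the representation \eqref{eq: representacion de Tb con b multilineal} established after Proposition~\ref{propo: representacion de Tb con oscilaciones medias de bi}, mimicking the splitting carried out in the proof of that proposition but isolating the $\ell$-th coordinate instead of the first one.

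First I would split the sum over $S_m$ in \eqref{eq: representacion de Tb con b multilineal} according to the value of $\sigma_\ell$, writing $T_{\mathbf{b}}f(x)=A_\ell+B_\ell$, where $A_\ell$ collects the terms with $\sigma_\ell=1$ and $B_\ell$ those with $\sigma_\ell=0$. Deleting the $\ell$-th entry gives a bijection between $\{\sigma\in S_m:\sigma_\ell=1\}$ and $S_{m-1}$, and likewise between $\{\sigma\in S_m:\sigma_\ell=0\}$ and $S_{m-1}$; I will write $\theta\in S_{m-1}$ for the image, indexing the components of $\overline{\mathbf{b}}_{\ell}$. When $\sigma_\ell=1$ one has $|\sigma|=|\theta|+1$, so the sign $(-1)^{m-|\sigma|}$ equals $(-1)^{(m-1)-|\theta|}$; factoring $b_\ell(x)$ out of every such term, $A_\ell$ becomes exactly $b_\ell(x)$ times the right-hand side of \eqref{eq: representacion de Tb con b multilineal} applied to the $(m-1)$-symbol $\overline{\mathbf{b}}_{\ell}$, that is, $A_\ell=b_\ell(x)\,T_{\overline{\mathbf{b}}_{\ell}}f(x)$.

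For $B_\ell$, when $\sigma_\ell=0$ the factor $b_\ell$ sits inside the argument of $T$ (since $\bar\sigma_\ell=1$), and now $|\sigma|=|\theta|$, so $(-1)^{m-|\sigma|}=-(-1)^{(m-1)-|\theta|}$; using linearity of $T$ to group $b_\ell f$ together, $B_\ell$ equals $-1$ times the right-hand side of \eqref{eq: representacion de Tb con b multilineal} applied to $\overline{\mathbf{b}}_{\ell}$ with $b_\ell f$ in place of $f$, i.e. $B_\ell=-T_{\overline{\mathbf{b}}_{\ell}}(b_\ell f)(x)$. Adding the two pieces yields
\[T_{\mathbf{b}}f(x)=b_\ell(x)\,T_{\overline{\mathbf{b}}_{\ell}}f(x)-T_{\overline{\mathbf{b}}_{\ell}}(b_\ell f)(x)=\left[b_\ell,T_{\overline{\mathbf{b}}_{\ell}}f\right](x),\]
which is the asserted identity.

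The only delicate point is the bookkeeping: verifying that the deletion map is a bijection onto $S_{m-1}$ and that the truncated products $\prod_{i\neq\ell}[b_i(x)]^{\sigma_i}$ and $\prod_{i\neq\ell}[b_i]^{\bar\sigma_i}$ correspond under it to the full products appearing in \eqref{eq: representacion de Tb con b multilineal} for $\overline{\mathbf{b}}_{\ell}$. This is routine and is essentially the $\ell=1$ computation already performed inside the proof of Proposition~\ref{propo: representacion de Tb con oscilaciones medias de bi}. (Alternatively, one may simply note that the right-hand side of \eqref{eq: representacion de Tb con oscilaciones medias de bi} is invariant under any permutation of the indices together with the corresponding permutation of the constants $\lambda_i$; taking $\lambda_i=0$ then reduces the statement to that proof applied with the $\ell$-th coordinate playing the role of the first.)
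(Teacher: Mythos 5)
Your proof is correct and follows essentially the same route as the paper's: start from the representation \eqref{eq: representacion de Tb con b multilineal}, split the sum over $S_m$ according to $\sigma_\ell\in\{0,1\}$, and re-index each half by $S_{m-1}$ to recognize $b_\ell(x)\,T_{\overline{\mathbf{b}}_\ell}f(x)$ and $-T_{\overline{\mathbf{b}}_\ell}(b_\ell f)(x)$ respectively. The paper states the re-indexing step tersely ("by writing these sums over elements of $S_{m-1}$"), and your write-up simply makes explicit the sign bookkeeping and the bijection that the paper leaves implicit.
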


\begin{proof}
	By virtue of \eqref{eq: representacion de Tb con b multilineal} we have that
	\[T_{\mathbf{b}}f(x)=\sum_{\sigma\in S_m}(-1)^{m-|\sigma|}\left(\prod_{i=1}^m b_i^{\sigma_i}(x)\right)T\left(\left(\prod_{i=1}^m b_i^{\bar{\sigma}_i}\right)f\right)(x)=\sum_{\sigma\in S_m: \sigma_\ell=0}+\sum_{\sigma\in S_m: \sigma_\ell=1}.\]
	By writing these sums over elements of $S_{m-1}$ we get
	\[T_{\mathbf{b}}f(x)=b_\ell(x)T_{\overline{\mathbf{b}}_{\ell}}f(x)-T_{\overline{\mathbf{b}}_{\ell}}(b_\ell f)(x),\]
	as desired.
\end{proof}

We shall deal with linear operators $T$ such that $Tf(x)$ has an integral representation for adequate values of $x$. The following proposition states that its commutators have the same property. The proof is straightforward and we shall omit it.

\begin{propo}\label{propo: linealidad y rep integral de Tb}
	Let $T$ be linear, $m\in \mathbb{N}$ and $\mathbf{b}=(b_1,\dots,b_m)$ be a  multilinear symbol. Then 
	\begin{enumerate}[label=(\alph*)]
		\item\label{item:lineal} $T_\mathbf{b}$ is a linear operator;
		\item\label{item:representacion} if $T$ has the representation
		\begin{equation}\label{eq: representacion integral de T}
		Tf(x)=\int_{\mathbb{R}^n} K(x-y)f(y)\,dy
		\end{equation}
		for every $x\not\in \mathrm{supp}(f)$, then
		\begin{equation}\label{eq: representacion integral de Tb}
		T_\mathbf{b}f(x)=\int_{\mathbb{R}^n}\prod_{i=1}^m\left(b_i(x)-b_i(y)\right)K(x-y)f(y)\,dy
		\end{equation}
		for these $x$.
	\end{enumerate}
\end{propo}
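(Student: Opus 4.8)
The plan is to deduce both assertions from the explicit expansion \eqref{eq: representacion de Tb con b multilineal}, obtained (with $\lambda_i=0$) in the Remark after Proposition~\ref{propo: representacion de Tb con oscilaciones medias de bi}, whose own proof uses nothing but the linearity of $T$.

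For part~\ref{item:lineal} I would read off linearity directly from \eqref{eq: representacion de Tb con b multilineal}: for each fixed $\sigma\in S_m$ the map $f\mapsto\left(\prod_{i=1}^m b_i^{\sigma_i}(x)\right)T\left(\left(\prod_{i=1}^m b_i^{\bar{\sigma}_i}\right)f\right)(x)$ is the composition of the linear map $f\mapsto\left(\prod_{i=1}^m b_i^{\bar{\sigma}_i}\right)f$, the linear operator $T$, and multiplication by the fixed scalar $\prod_{i=1}^m b_i^{\sigma_i}(x)$; hence it is linear, and so is the finite sum of such maps. (Equivalently one can induct on $m$ using $T_{\mathbf{b}}f=b_1T_{\overline{\mathbf{b}}_1}f-T_{\overline{\mathbf{b}}_1}(b_1f)$ and the linearity of $T_{\overline{\mathbf{b}}_1}$ from the inductive hypothesis, the base case $m=1$ being immediate from \eqref{eq: conmutador de orden 1}.)

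For part~\ref{item:representacion}, fix $x\notin\supp f$. For every $\sigma\in S_m$ the function $g_\sigma:=\left(\prod_{i=1}^m b_i^{\bar{\sigma}_i}\right)f$ satisfies $\supp g_\sigma\subseteq\supp f$, so $x\notin\supp g_\sigma$ and the assumed representation \eqref{eq: representacion integral de T} yields
\[T(g_\sigma)(x)=\int_{\mathbb{R}^n}K(x-y)\left(\prod_{i=1}^m b_i^{\bar{\sigma}_i}(y)\right)f(y)\,dy.\]
Substituting this into \eqref{eq: representacion de Tb con b multilineal} and moving the finite sum inside the integral gives
\[T_{\mathbf{b}}f(x)=\int_{\mathbb{R}^n}\left[\sum_{\sigma\in S_m}(-1)^{m-|\sigma|}\left(\prod_{i=1}^m b_i^{\sigma_i}(x)\right)\left(\prod_{i=1}^m b_i^{\bar{\sigma}_i}(y)\right)\right]K(x-y)f(y)\,dy.\]
It then remains to identify the bracketed factor with $\prod_{i=1}^m(b_i(x)-b_i(y))$, which is exactly Lemma~\ref{lema: producto de sumas} applied with $a_i=b_i(x)$ and $b_i=-b_i(y)$, since $\sum_{i=1}^m\bar{\sigma}_i=m-|\sigma|$ forces $\prod_{i=1}^m(-b_i(y))^{\bar{\sigma}_i}=(-1)^{m-|\sigma|}\prod_{i=1}^m b_i(y)^{\bar{\sigma}_i}$. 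This is \eqref{eq: representacion integral de Tb}.

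I do not expect a genuine obstacle here; the computation is routine. The only points worth a word of care are that multiplying $f$ by the factors $b_i$ does not enlarge its support, so that \eqref{eq: representacion integral de T} may be invoked termwise at $x$, and that the interchange of the finite sum with the integral is immediate. Throughout one works with $f$ for which $Tf$ and each $T(g_\sigma)$ are defined (for instance $f$ bounded with compact support, as in the applications), so that all the expressions above are meaningful.
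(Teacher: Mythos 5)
Your proof is correct, and since the paper explicitly omits the proof of this proposition as ``straightforward,'' there is nothing to compare it against directly. The key point you are careful about — that $\supp g_\sigma\subseteq\supp f$, so the integral representation of $T$ applies to each $T(g_\sigma)$ at $x\notin\supp f$ — is exactly the one worth stating. The recombination step via Lemma~\ref{lema: producto de sumas} with $a_i=b_i(x)$ and $b_i=-b_i(y)$ is verified correctly, and there is no circularity in invoking \eqref{eq: representacion de Tb con b multilineal} since Proposition~\ref{propo: representacion de Tb con oscilaciones medias de bi} is proved using only the linearity of $T$ and the recursive definition of $T_{\mathbf b}$.

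One remark: the argument the authors likely had in mind is a single induction on $m$ using the recursive definition $T_{\mathbf b}f=b_1 T_{\overline{\mathbf b}_1}f-T_{\overline{\mathbf b}_1}(b_1f)$, which handles \ref{item:lineal} and \ref{item:representacion} simultaneously and bypasses the combinatorial expansion entirely: for $x\notin\supp f$, apply the inductive hypothesis to $T_{\overline{\mathbf b}_1}f$ and to $T_{\overline{\mathbf b}_1}(b_1f)$ (again using $\supp(b_1f)\subseteq\supp f$) and factor out $b_1(x)-b_1(y)$ inside the integral. Your route via \eqref{eq: representacion de Tb con b multilineal} is equally valid and has the minor advantage that it makes explicit how the sign and the binomial-type sum encode the product of differences; the inductive route is a touch shorter and avoids appealing to Lemma~\ref{lema: producto de sumas}. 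Either way the result is obtained correctly.
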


The following result states a representation for $T_\mathbf{b}$ by means of lower order commutators.

\begin{propo}\label{propo: Tb en termino de conmutadores con simbolos de menor orden}
	Let $T$ be an operator with the representation given by \eqref{eq: representacion integral de T} for $x\not\in\mathrm{supp}(f)$ and $\mathbf{b}=(b_1,\dots,b_m)$. If $\lambda_i$ are fixed constants for every $1\leq i\leq m$ and $x\not\in \mathrm{supp}(f)$, then
	\begin{align*}
	T_{\mathbf b} f(x)&=\left(\prod_{i=1}^m(b_i(x)-\lambda_i)\right)Tf(x)-T\left(\left(\prod_{i=1}^m(b_i-\lambda_i)\right)f\right)(x)\\
	&\quad -\sum_{\sigma\in S_m,\,0<|\sigma|<m }T_{\mathbf b_\sigma}\left(\left(\prod_{i=1}^m(b_i-\lambda_i)^{\bar\sigma_i}\right)f\right)(x).
	\end{align*}
\end{propo}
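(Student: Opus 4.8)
The plan is to reduce everything to an inclusion--exclusion identity on the Boolean lattice $S_m$, starting from the representation in Proposition~\ref{propo: representacion de Tb con oscilaciones medias de bi}. It is convenient to identify $\sigma\in S_m$ with the set $\{i:\sigma_i=1\}$, to write $\sigma\leq\tau$ for the induced order (so $\sigma\leq\tau$ iff $\sigma_i\leq\tau_i$ for all $i$), to denote by $\mathbf{1},\mathbf{0}$ the all-ones and all-zeros elements of $S_m$, and to set, for $\sigma\in S_m$,
\[
g_\sigma:=\Bigl(\prod_{i=1}^m (b_i-\lambda_i)^{\bar{\sigma}_i}\Bigr)f,
\qquad
Q_\sigma(x):=\Bigl(\prod_{i=1}^m (b_i(x)-\lambda_i)^{\sigma_i}\Bigr)\,T(g_\sigma)(x).
\]
With this notation \eqref{eq: representacion de Tb con oscilaciones medias de bi} reads $T_{\mathbf{b}}f(x)=\sum_{\sigma\in S_m}(-1)^{m-|\sigma|}Q_\sigma(x)$; the first two terms of the claimed identity are exactly $Q_{\mathbf{1}}(x)=\bigl(\prod_{i=1}^m(b_i(x)-\lambda_i)\bigr)Tf(x)$ and $-Q_{\mathbf{0}}(x)=-T\bigl(\prod_{i=1}^m(b_i-\lambda_i)f\bigr)(x)$; and $g_\sigma$ is precisely the function to which $T_{\mathbf{b}_\sigma}$ is applied in the final sum.

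The first step is to prove the following localized version of \eqref{eq: representacion de Tb con oscilaciones medias de bi}: for every $\sigma\in S_m$ and every $x\notin\supp f$,
\[
T_{\mathbf{b}_\sigma}(g_\sigma)(x)=\sum_{\tau\leq\sigma}(-1)^{|\sigma|-|\tau|}Q_\tau(x).
\]
To get this I would apply the integral representation \eqref{eq: representacion integral de Tb} to the symbol $\mathbf{b}_\sigma$ and the function $g_\sigma$ (legitimate for $x\notin\supp f$, since $\supp g_\sigma\subseteq\supp f$), write $b_i(x)-b_i(y)=(b_i(x)-\lambda_i)-(b_i(y)-\lambda_i)$ for each $i$ with $\sigma_i=1$, and expand the product $\prod_{i:\sigma_i=1}(b_i(x)-b_i(y))$ by means of Lemma~\ref{lema: producto de sumas}. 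Each term of the expansion is indexed by a subvector $\tau\leq\sigma$, carries the sign $(-1)^{|\sigma|-|\tau|}$, the outside factor $\prod_{i:\tau_i=1}(b_i(x)-\lambda_i)$, and an integral whose surviving evaluations in $y$ are $\prod_{i:\sigma_i=1,\,\tau_i=0}(b_i(y)-\lambda_i)$ multiplied by the factors already carried by $g_\sigma$, namely $\prod_{i:\sigma_i=0}(b_i(y)-\lambda_i)$; since $\tau\leq\sigma$, the index set $\{i:\tau_i=0\}$ is the disjoint union of these two sets, so that integral equals $T(g_\tau)(x)$ by \eqref{eq: representacion integral de T}, and the displayed formula follows. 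Taking $\sigma=\mathbf{1}$ here recovers \eqref{eq: representacion de Tb con oscilaciones medias de bi} itself (as $g_{\mathbf{1}}=f$), while $\sigma=\mathbf{0}$ gives the trivial $T(g_{\mathbf{0}})(x)=Q_{\mathbf{0}}(x)$.

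The second step is to invert this relation. Viewing $\sigma\mapsto T_{\mathbf{b}_\sigma}(g_\sigma)(x)$ and $\sigma\mapsto Q_\sigma(x)$ as functions on $S_m$, the formula above expresses the former as the ``down-sum'' of the latter twisted by $(-1)^{|\cdot|}$, so Möbius inversion on the Boolean lattice---equivalently, the elementary identity $\sum_{B\subseteq E}(-1)^{|B|}=0$ for $E\neq\varnothing$---gives $Q_\sigma(x)=\sum_{\tau\leq\sigma}T_{\mathbf{b}_\tau}(g_\tau)(x)$ for every $\sigma$. Specializing to $\sigma=\mathbf{1}$, isolating the term $\tau=\mathbf{1}$ (where $T_{\mathbf{b}_{\mathbf{1}}}(g_{\mathbf{1}})=T_{\mathbf{b}}f$) and the term $\tau=\mathbf{0}$ (where $T_{\mathbf{b}_{\mathbf{0}}}(g_{\mathbf{0}})=T(g_{\mathbf{0}})=Q_{\mathbf{0}}$), and solving for $T_{\mathbf{b}}f(x)$ yields precisely the asserted identity. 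The only genuinely delicate point in the whole argument is the bookkeeping in the first step---verifying that the evaluations in $y$ produced by the expansion recombine with the factors hidden inside $g_\sigma$ to reproduce exactly $T(g_\tau)(x)$, with $\tau$ ranging over the subvectors of $\sigma$; once that identity is in place, the inversion and the final rearrangement are purely formal. One may also bypass the appeal to Möbius inversion and instead substitute the first-step formula directly into the right-hand side of the statement, collect the coefficient of each $Q_\tau$, and check by the same binomial cancellation that it equals $(-1)^{m-|\tau|}$.
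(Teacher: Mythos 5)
Your proof is correct, and it takes a genuinely different route from the paper's. The paper proves the identity by induction on $m$: it commutes out the last symbol via Corollary~\ref{coro: definicion recursiva de Tb independiente de la componente con que se conmuta}, applies the inductive hypothesis to $T_{\overline{\mathbf{b}}_{k+1}}$, and then carries out a somewhat intricate bookkeeping of index sets ($G_{k+1}$, $H_{k+1}$) to reassemble the terms. You instead derive, directly from the integral representation \eqref{eq: representacion integral de Tb} and the binomial expansion of Lemma~\ref{lema: producto de sumas}, the whole family of identities
\[
T_{\mathbf{b}_\sigma}(g_\sigma)(x)=\sum_{\tau\leq\sigma}(-1)^{|\sigma|-|\tau|}Q_\tau(x),\qquad x\notin\supp f,
\]
for every $\sigma\in S_m$ at once, and then invert on the Boolean lattice. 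I checked the key computation: since $\tau\leq\sigma$, the index set $\{i:\tau_i=0\}$ splits as the disjoint union of $\{i:\sigma_i=1,\ \tau_i=0\}$ and $\{i:\sigma_i=0\}$, so the surviving $y$-evaluations indeed recombine with the factors in $g_\sigma$ to give $g_\tau$, and the integral equals $T(g_\tau)(x)$ since $\supp g_\tau\subseteq\supp f$. The Möbius inversion $Q_\sigma=\sum_{\tau\leq\sigma}T_{\mathbf{b}_\tau}(g_\tau)$ is standard and correct, and specializing at $\sigma=\mathbf{1}$ gives exactly the claimed identity. What your approach buys is conceptual transparency: the statement is revealed as an instance of inclusion--exclusion, no induction is needed, no appeal to Corollary~\ref{coro: definicion recursiva de Tb independiente de la componente con que se conmuta} is needed, and in fact the case $\sigma=\mathbf{1}$ of your first-step identity independently re-derives (for $x\notin\supp f$) the representation in Proposition~\ref{propo: representacion de Tb con oscilaciones medias de bi}, so the argument is nearly self-contained. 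The paper's inductive proof is more elementary in the tools it invokes, but the combinatorial structure you isolate is arguably the cleaner way to see why the formula holds.
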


\begin{proof}
	By induction on $m$, if $\mathbf{b}=(b_1)$ by applying Proposition~\ref{propo: representacion de Tb con oscilaciones medias de bi} we have that
	\begin{align*}
	T_{\mathbf b} f(x)&=(b_1(x)-\lambda_1)Tf(x)-T((b_1-\lambda_1)f)(x),
	\end{align*}
	which implies the thesis since the set
	\[\{\sigma \in S_1:  0<|\sigma|<1\}\]
	is empty. Assume now that the equality holds for every multilinear symbol with $k$ components and let $\mathbf b=(b_1,\dots,b_k,b_{k+1})$. From Corollary~\ref{coro: definicion recursiva de Tb independiente de la componente con que se conmuta} and Proposition~\ref{propo: linealidad y rep integral de Tb} we can write
	\[T_{\mathbf{b}}f(x)=[b_{k+1},T_{\overline{\mathbf{b}}_{k+1}}f](x)=(b_{k+1}(x)-\lambda_{k+1})T_{\overline{\mathbf{b}}_{k+1}}f(x)-T_{\overline{\mathbf{b}}_{k+1}}((b_{k+1}-\lambda_{k+1})f)(x),\]
	where $\overline{\mathbf{b}}_{k+1}=(b_1,\dots,b_k)$.
	By the inductive hypothesis we write the first term as 		 
	\begin{align*}
	(b_{k+1}(x)-\lambda_{k+1})T_{\overline{\mathbf{b}}_{k+1}}f(x)&=\left(\prod_{i=1}^{k+1}(b_i(x)-\lambda_i)\right)Tf(x)\\
	&\quad -(b_{k+1}(x)-\lambda_{k+1})T\left(\left(\prod_{i=1}^{k}(b_i-\lambda_i)\right)f\right)(x)\\
	&\quad-\sum_{\theta \in S_k,\,0<|\theta|<k}(b_{k+1}(x)-\lambda_{k+1})T_{\mathbf b_{\theta}}\left(\left(\prod_{i=1}^{k}(b_i-\lambda_i)^{\bar\theta_{i}}\right)f\right)(x)\\
	&=A+B+C.
	\end{align*}

	We shall use Proposition~\ref{propo: linealidad y rep integral de Tb} to estimate some of the previous terms. We have
	\begin{align*}
	B&=-\int_{\mathbb{R}^n}\left(b_{k+1}(x)-b_{k+1}(y)+b_{k+1}(y)-\lambda_{k+1}\right)\left(\prod_{i=1}^k (b_i(y)-\lambda_i)\right)K(x-y)f(y)\,dy\\
	&=-T_{b_{k+1}}\left(\left(\prod_{i=1}^{k}(b_i-\lambda_i)\right)f\right)(x)-T\left(\left(\prod_{i=1}^{k+1}(b_i-\lambda_i)\right)f\right)(x).
	\end{align*}
	On the other hand, if we set
	\[G_{k+1}=\left\{\sigma\in S_{k+1}: \sigma_{k+1}=1 \textrm{ and } 0<\sum_{i=1}^k \sigma_i<k\right\}\]
	and
	\[H_{k+1}=\left\{\sigma\in S_{k+1}: \sigma_{k+1}=0 \textrm{ and } 0<\sum_{i=1}^k \sigma_i<k\right\},\]
	we can write
	\begin{align*}
	C&=-\sum_{\theta \in S_k,\, 0<|\theta|<k}\int_{\mathbb{R}^n}(b_{k+1}(x)-\lambda_{k+1})\left(\prod_{i=1}^k(b_i(x)-b_i(y))^{\theta_i}\right)\left(\prod_{i=1}^k(b_i(y)-\lambda_i)^{\bar\theta_i}\right)K(x-y)f(y)\,dy\\
	&=-\sum_{\theta \in S_k,\, 0<|\theta|<k}\int_{\mathbb{R}^n}(b_{k+1}(x)-b_{k+1}(y))\left(\prod_{i=1}^k(b_i(x)-b_i(y))^{\theta_i}\right)\left(\prod_{i=1}^k(b_i(y)-\lambda_i)^{\bar\theta_i}\right)K(x-y)f(y)\,dy\\
	&\quad -\sum_{\theta \in S_k,\, 0<|\theta|<k}\int_{\mathbb{R}^n}(b_{k+1}(y)-\lambda_{k+1})\left(\prod_{i=1}^k(b_i(x)-b_i(y))^{\theta_i}\right)\left(\prod_{i=1}^k(b_i(y)-\lambda_i)^{\bar\theta_i}\right)K(x-y)f(y)\,dy\\
	&=-\sum_{\sigma\in G_{k+1}} T_{\mathbf{b}_\sigma}\left(\prod_{i=1}^{k+1}(b_i-\lambda_i)^{\bar \sigma_i}f\right)(x)-\sum_{\sigma\in H_{k+1}} T_{\mathbf{b}_\sigma}\left(\prod_{i=1}^{k+1}(b_i-\lambda_i)^{\bar \sigma_i}f\right)(x).
	\end{align*}
	This yields the thesis since 
	\[G_{k+1}\cup H_{k+1}\cup \{(0,0,\dots,0,1),(1,1,\dots,1,0)\}=\{\sigma\in S_{k+1}: 0<|\sigma|<k+1\}.\qedhere\]
	\end{proof}

\begin{propo}\label{propo: Tb en terminos de conmutadores con simbolos de menor orden aplicado a f}
	If $\mathbf{b}=(b_1,\dots,b_m)$ and $\lambda_i$ are fixed constants for every $1\leq i\leq m$, then
	\[T_{\mathbf{b}}f=(-1)^mT\left(\left(\prod_{i=1}^m(b_i-\lambda_i)\right)f\right)+\sum_{\sigma\in S_m,\, |\sigma|<m}(-1)^{m-1-|\sigma|}\left(\prod_{i=1}^m (b_i-\lambda_i)^{\bar\sigma_i}\right)T_{\mathbf{b}_\sigma} f.\]
\end{propo}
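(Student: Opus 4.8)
The plan is to prove the identity by induction on $m$, isolating one component of $\mathbf{b}$ with Corollary~\ref{coro: definicion recursiva de Tb independiente de la componente con que se conmuta} and reducing to commutators with shorter symbols; no property of $T$ beyond linearity will be used. Throughout I write $\beta_i=b_i-\lambda_i$, and I use the convention $T_{\mathbf{b}_\sigma}=T$ when $\sigma$ is the zero vector (so that $\mathbf{b}_\sigma$ is the empty symbol). For $m=1$ the sum on the right consists only of $\sigma=(0)$, so the right-hand side equals $(-1)^{1}T(\beta_1f)+(-1)^{0}\beta_1T_{\mathbf{b}_{(0)}}f=\beta_1Tf-T(\beta_1f)=[b_1,T]f=T_{\mathbf{b}}f$, the $\lambda_1$ terms cancelling by linearity of $T$. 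This is the base case.

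For the inductive step, suppose the formula holds for every symbol with $k$ components and let $\mathbf{b}=(b_1,\dots,b_{k+1})$. Write $c=\beta_{k+1}$ and $\mathbf{b}'=\overline{\mathbf{b}}_{k+1}=(b_1,\dots,b_k)$. By Corollary~\ref{coro: definicion recursiva de Tb independiente de la componente con que se conmuta} together with the linearity of $T_{\mathbf{b}'}$ (Proposition~\ref{propo: linealidad y rep integral de Tb}), and cancelling the $\lambda_{k+1}$ terms,
\[
T_{\mathbf{b}}f=[b_{k+1},T_{\mathbf{b}'}f]=c\,T_{\mathbf{b}'}f-T_{\mathbf{b}'}(cf).
\]
I would then apply the inductive hypothesis (with the constants $\lambda_1,\dots,\lambda_k$) to both occurrences of $T_{\mathbf{b}'}$ appearing here, expressing them as sums over $\theta\in S_k$ with $|\theta|<k$ plus their leading terms; since $c\prod_{i=1}^k\beta_i=\prod_{i=1}^{k+1}\beta_i$, the leading term of $T_{\mathbf{b}'}(cf)$ is $(-1)^kT\big(\prod_{i=1}^{k+1}\beta_if\big)$. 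The expansion of $T_{\mathbf{b}'}(cf)$ contains the terms $T_{\mathbf{b}_\theta}(cf)$, which I would rewrite, applying Corollary~\ref{coro: definicion recursiva de Tb independiente de la componente con que se conmuta} to the symbol $(\mathbf{b}_\theta,b_{k+1})$, as $T_{\mathbf{b}_\theta}(cf)=c\,T_{\mathbf{b}_\theta}f-T_{(\mathbf{b}_\theta,b_{k+1})}f$. Subtracting the two expansions, the terms $c\big(\prod_{i=1}^k\beta_i^{\bar\theta_i}\big)T_{\mathbf{b}_\theta}f$ occur with opposite signs and cancel, which leaves
\begin{align*}
T_{\mathbf{b}}f&=(-1)^kc\,T\Big(\prod_{i=1}^k\beta_if\Big)+(-1)^{k+1}T\Big(\prod_{i=1}^{k+1}\beta_if\Big)\\
&\quad+\sum_{\theta\in S_k,\,|\theta|<k}(-1)^{k-1-|\theta|}\Big(\prod_{i=1}^k\beta_i^{\bar\theta_i}\Big)T_{(\mathbf{b}_\theta,b_{k+1})}f.
\end{align*}

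It remains to identify this with the claimed formula for $k+1$ components. The middle summand is already the leading term. I would split the sum over $\sigma\in S_{k+1}$ with $|\sigma|<k+1$ according to the value of $\sigma_{k+1}$: for $\sigma=(\theta,1)$ one has $|\theta|<k$, $\mathbf{b}_\sigma=(\mathbf{b}_\theta,b_{k+1})$, $\prod_{i=1}^{k+1}\beta_i^{\bar\sigma_i}=\prod_{i=1}^k\beta_i^{\bar\theta_i}$ and $(-1)^{(k+1)-1-|\sigma|}=(-1)^{k-1-|\theta|}$, so these terms are exactly the last sum above; for $\sigma=(\theta,0)$ with $\theta\in S_k$ arbitrary one has $\mathbf{b}_\sigma=\mathbf{b}_\theta$ and $\prod_{i=1}^{k+1}\beta_i^{\bar\sigma_i}=c\prod_{i=1}^k\beta_i^{\bar\theta_i}$, so this part of the sum is $c\sum_{\theta\in S_k}(-1)^{k-|\theta|}\big(\prod_{i=1}^k\beta_i^{\bar\theta_i}\big)T_{\mathbf{b}_\theta}f$. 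Isolating the top index $\theta=(1,\dots,1)$ here — which gives $T_{\mathbf{b}'}f$ — and using the inductive hypothesis for the remaining indices shows that $\sum_{\theta\in S_k}(-1)^{k-|\theta|}\big(\prod_{i=1}^k\beta_i^{\bar\theta_i}\big)T_{\mathbf{b}_\theta}f=(-1)^kT\big(\prod_{i=1}^k\beta_if\big)$, so this part contributes precisely $(-1)^kc\,T\big(\prod_{i=1}^k\beta_if\big)$. Gathering the three matching pieces completes the induction. There is no analytic difficulty here; the one delicate point is the combinatorial bookkeeping in this last step — tracking the signs $(-1)^{m-1-|\sigma|}$, the sub\-symbols $\mathbf{b}_\sigma$, and which products $\prod\beta_i^{\bar\sigma_i}$ stand outside rather than inside the operator — together with the small resummation $\sum_{\theta\in S_k}(-1)^{k-|\theta|}\big(\prod_i\beta_i^{\bar\theta_i}\big)T_{\mathbf{b}_\theta}f=(-1)^kT\big(\prod_i\beta_if\big)$, which is nothing but the inductive hypothesis with its top term $\theta=(1,\dots,1)$ absorbed.
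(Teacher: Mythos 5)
Your proof is correct and takes essentially the same approach as the paper: induct on $m$, peel off the last component via Corollary~\ref{coro: definicion recursiva de Tb independiente de la componente con que se conmuta}, apply the inductive hypothesis to the $k$-component symbol, rewrite $T_{\mathbf{b}_\theta}(cf)$ through the commutator identity $T_{\mathbf{b}_\theta}(cf)=c\,T_{\mathbf{b}_\theta}f-T_{(\mathbf{b}_\theta,b_{k+1})}f$, and then regroup by the value of $\sigma_{k+1}$. The only cosmetic difference is that you expand $c\,T_{\mathbf{b}'}f$ by the inductive hypothesis and later resum it (using the hypothesis in reverse) to recover $(-1)^k c\,T\big(\prod_i\beta_i f\big)$, whereas the paper leaves $c\,T_{\mathbf{b}'}f$ untouched and simply identifies it as the $\sigma=(1,\dots,1,0)$ term of the target sum; the paper's bookkeeping is marginally shorter, but the two arguments are the same in substance.
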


\begin{proof}
	We proceed by induction. Recall that $T_{\mathbf{b}_\sigma}=T$ when each component of $\sigma$ is zero, so that the result trivially holds when $m=1$. Now assume that it is true for every multilinear symbol with $k$ components and let us prove for $\mathbf{b}=(b_1,b_2,\dots,b_{k+1})$. For simplicity, we will denote  $\tilde{\mathbf{b}}=\overline{\mathbf{b}}_{k+1}=(b_1,b_2,\dots,b_k)$. Then, from Corollary~\ref{coro: definicion recursiva de Tb independiente de la componente con que se conmuta} we have that
	\begin{align*}
	T_{\mathbf{b}}f(x)&=[b_{k+1},T_{\tilde{\mathbf{b}}}]f(x)\\
	&=(b_{k+1}(x)-\lambda_{k+1})T_{\tilde{\mathbf{b}}}f(x)-T_{\tilde{\mathbf{b}}}((b_{k+1}-\lambda_{k+1})f)(x)\\
	&=(b_{k+1}(x)-\lambda_{k+1})T_{\tilde{\mathbf{b}}}f(x)-(-1)^kT\left(\left(\prod_{i=1}^{k+1}(b_i-\lambda_i)\right)f\right)(x)+\\
	&\quad+\sum_{\theta\in S_k,\, |\theta|<k}(-1)^{k-|\theta|}\left(\prod_{i=1}^k(b_i-\lambda_i)^{\bar\theta_i}\right)T_{\mathbf{b}_\theta}((b_{k+1}-\lambda_{k+1})f)(x).
	\end{align*}
	By defining the sets
	\[H_k=\left\{\sigma\in S_k: \sigma_k=1 \textrm{ and } \prod_{i=1}^{k-1} \sigma_i=0\right\}\quad\textrm{ and }\quad G_k=\left\{\sigma\in S_k: \sigma_k=0 \textrm{ and } \prod_{i=1}^{k-1} \sigma_i=0\right\}\]
	we get
	\[\{\theta\in S_k, |\theta|<k\}=H_k\cup G_k\cup\left\{\eta^{k}\right\},\]
	where $\eta^{k}=(1,1,1,\dots,1,0)\in S_k$.
	Since $\# (H_k\cup G_k)=\# H_{k+1}$, for every $\sigma\in H_{k+1}$ we can associate a vector $\theta=\theta(\sigma)\in S_k$, with at least one of its components equal to 0 such that $\sigma_i=\theta_i$ for each $1\leq i\leq k$. By using the relation 
	\begin{align*}
	T_{\mathbf{b}_\sigma}f(x)&=[b_{k+1},T_{\mathbf{b}_\theta}]f(x)\\
	&=(b_{k+1}-\lambda_{k+1})T_{\mathbf{b}_\theta}f(x)-T_{\mathbf{b}_\theta}((b_{k+1}-\lambda_{k+1})f)(x).
	\end{align*}
	we can write the sum
	\[\sum_{\theta\in S_k,\, |\theta|<k}(-1)^{k-|\theta|}\left(\prod_{i=1}^k(b_i-\lambda_i)^{\bar\theta_i}\right)T_{\mathbf{b}_\theta}((b_{k+1}-\lambda_{k+1})f)(x)\]
	as
	\[\sum_{\sigma\in H_{k+1}}(-1)^{k-|\sigma|+1}\left(\prod_{i=1}^k(b_i-\lambda_i)^{\bar\sigma_i}\right)\left((b_{k+1}-\lambda_{k+1})T_{\mathbf{b}_\theta}f(x)-T_{\mathbf{b}_\sigma}f(x)\right)=I+II,\]
	where
	\[I=\sum_{\sigma\in G_{k+1}}(-1)^{k-|\sigma|}\left(\prod_{i=1}^{k+1}(b_i-\lambda_i)^{\bar\sigma_i}\right)T_{\mathbf{b}_\sigma}f(x)\]
	and
	\[II=\sum_{\sigma\in H_{k+1}}(-1)^{k-|\sigma|}\left(\prod_{i=1}^k(b_i-\lambda_i)^{\bar\sigma_i}\right)\left((b_{k+1}-\lambda_{k+1})T_{\mathbf{b}_\sigma}f(x)\right).\]
	Finally, the equality
	\[\{\sigma\in S_{k+1},|\sigma|<k+1\}=H_{k+1}\cup G_{k+1}\cup \left\{\eta^{k+1}\right\},\]
	where $\eta^{k+1}=(1,1,1,\dots,1,0)\in S_{k+1}$, implies that the sum
	\[(b_{k+1}(x)-\lambda_{k+1})T_{\tilde{\mathbf{b}}}f(x)+\sum_{\theta\in S_k,\, |\theta|<k}(-1)^{k-|\theta|}\left(\prod_{i=1}^k(b_i-\lambda_i)^{\bar\theta_i}\right)T_{\mathbf{b}_\theta}((b_{k+1}-\lambda_{k+1})f)(x)\]
	is equal to
	\[\sum_{\sigma\in S_{k+1},\, |\sigma|<k+1}(-1)^{k-|\sigma|}\left(\prod_{i=1}^{k+1}(b_i-\lambda_i)^{\bar\sigma_i}\right)T_{\mathbf{b}_\sigma}f(x).\]
	This concludes the proof.
\end{proof}

\section{Mixed inequalities for commutators of CZO}\label{seccion: caso CZO}

We shall use a mixed weak type estimate for $T$ given below. This result was set and proved in \cite{CruzUribe-Martell-Perez}. Also, the proof can be performed in an alternative way by following the ideas involved in \eqref{eq: desigualdad mixta para Tbm OCZ b en BMO} in \cite{Berra-Carena-Pradolini(M)}.

\begin{teo}[\cite{CruzUribe-Martell-Perez}]\label{teo: desigualdad mixta para T}
	Let $T$ be a Calder\'on-Zygmund operator. If $u,v\in A_1$ or  $u\in A_1$ and $v\in A_\infty(u)$, then there exists a positive constant $C$ such that the inequality
	\[uv\left(\left\{x\in \mathbb{R}^n: \frac{|T(fv)(x)|}{v(x)}>t\right\}\right)\leq \frac{C}{t}\int_{\mathbb{R}^n} |f(x)|u(x)v(x)\,dx,\]
	holds for every $t>0$ and every bounded function $f$ with compact support.
\end{teo}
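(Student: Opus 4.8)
The plan is to prove the estimate by a Calderón--Zygmund decomposition taken with respect to the measure $d\mu=v\,dx$ (not Lebesgue measure), splitting $f$ into a good and a bad part and handling them by weighted $L^s$-theory and by the smoothness of the kernel, respectively; the algebraic bridge between the two weights is Lemma~\ref{lema: lema fundamental}. This is essentially the argument of \cite{CruzUribe-Martell-Perez}. Since $u,v\in A_1$ implies $v\in A_\infty(u)$ (combine the reverse Hölder inequality for $v$ with the $A_\infty$ character of $u$), it suffices to treat the hypothesis $u\in A_1$, $v\in A_\infty(u)$. After the usual dilation we may take $t=1$, and writing $f=f^+-f^-$ we may assume $f\ge 0$, bounded, with compact support; all integrals below are then finite because $uv$ and $v$ are locally integrable. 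A preliminary observation is that $u\in A_1\subseteq A_\infty$ together with $v\in A_\infty(u)$ forces $v\in A_\infty$ in the Lebesgue sense: the $A_\infty(u)$ estimate controls $v$-measures of subsets of a cube by a power of their $u$-measures, and $u\in A_\infty$ controls the latter by a power of the Lebesgue measure. In particular $\mu=v\,dx$ is doubling, which is what allows the decomposition to be run, and $uv\,dx$ is doubling by Lemma~\ref{lema: u en A1 y v en Ap(u) implica uv en Ap}.

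I would next decompose $f$ at height $1$ relative to $\mu$, obtaining pairwise disjoint dyadic cubes $\{Q_j\}$ with $1<\tfrac{1}{v(Q_j)}\int_{Q_j}fv\le C$ and $f\le 1$ a.e. on $\Omega^c$, where $\Omega=\bigcup_jQ_j$. Write $f=g+b$ with $g=f\,\mathcal X_{\Omega^c}+\sum_j f_{Q_j}^v\,\mathcal X_{Q_j}$, $f_{Q_j}^v=\tfrac{1}{v(Q_j)}\int_{Q_j}fv$, and $b=\sum_j b_j$, $b_j=(f-f_{Q_j}^v)\mathcal X_{Q_j}$. The key point of decomposing with respect to $v\,dx$ is that then $\int_{Q_j}b_j\,v=0$, so each $b_jv$ has vanishing Lebesgue integral. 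Since $|T(fv)|/v>1$ forces $|T(gv)|/v>1/2$ or $|T(bv)|/v>1/2$, it is enough to bound the $uv$-measure of each of these two sets by $C\int_{\mathbb R^n}f\,uv$.

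For the good part, $\|g\|_\infty\le C$, and using $\tfrac{uv(Q_j)}{v(Q_j)}\le C\inf_{Q_j}u$ from Lemma~\ref{lema: lema fundamental} one gets $\|g\|_{L^1(uv)}\le C\|f\|_{L^1(uv)}$; interpolating, $\int|g|^s uv\le\|g\|_\infty^{s-1}\|g\|_{L^1(uv)}\le C\|f\|_{L^1(uv)}$ for every $s>1$. By Chebyshev with respect to $uv$,
\[uv\Big(\big\{|T(gv)|>v/2\big\}\Big)\le 2^s\int_{\mathbb R^n}|T(gv)|^s\,uv^{1-s},\]
and since $\|gv\|_{L^s(uv^{1-s})}^s=\int|g|^s uv$, the estimate follows from the classical weighted $L^s$-boundedness of Calderón--Zygmund operators once we know $uv^{1-s}\in A_s$ for some $s>1$. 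This is the step needing the most care: expanding the $A_s$-condition for $uv^{1-s}$ and using the reverse Hölder inequality for $u\in A_1$ to absorb the contributions of $u$ and of $u^{-1/(s-1)}$, one checks that what remains is an $A_{1+1/\tau}$-condition for $v$ with $\tau\to 0$ as $s\to 1^+$, which holds for $s$ close enough to $1$ precisely because $v\in A_\infty$.

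For the bad part, split $uv(\{|T(bv)|>v/2\})\le uv(\Omega^*)+uv(\{x\notin\Omega^*:|T(bv)(x)|>v(x)/2\})$ with $\Omega^*=\bigcup_j c_nQ_j$ for a suitable dimensional dilation. By doubling of $uv$ and Lemma~\ref{lema: lema fundamental}, $uv(\Omega^*)\le C\sum_j uv(Q_j)\le C\sum_j v(Q_j)\inf_{Q_j}u\le C\sum_j\inf_{Q_j}u\int_{Q_j}fv\le C\int_{\mathbb R^n}fuv$, where we also used $v(Q_j)<\int_{Q_j}fv$ from the decomposition. For the remaining set, Chebyshev turns its measure into $2\int_{(\Omega^*)^c}|T(bv)(x)|u(x)\,dx$; for $x\notin c_nQ_j$ the vanishing integral of $b_jv$ and the Lipschitz condition \eqref{eq:prop del nucleo} give $|T(b_jv)(x)|\le C\tfrac{\ell(Q_j)}{|x-c_j|^{n+1}}\int_{Q_j}|b_j|v\le C\tfrac{\ell(Q_j)}{|x-c_j|^{n+1}}v(Q_j)$, and summing the geometric series over the dyadic annuli around $Q_j$ (using $u\in A_1$) yields $\int_{(c_nQ_j)^c}\tfrac{\ell(Q_j)}{|x-c_j|^{n+1}}u\le C\inf_{Q_j}u$. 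Hence this term is $\le C\sum_j v(Q_j)\inf_{Q_j}u\le C\sum_j uv(Q_j)\le C\int_{\mathbb R^n}fuv$, where we used $v(Q)\inf_Q u\le\int_Q uv$ and then the decomposition once more. Adding the good and bad contributions finishes the proof. I expect the main obstacle to be the good-part step: recognizing that the correct Chebyshev exponent produces the weight $uv^{1-s}$, and verifying its membership in $A_s$ for $s$ near $1$ from the sole hypotheses $u\in A_1$ and $v\in A_\infty(u)$.
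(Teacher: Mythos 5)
Note first that the paper does not prove this statement: it is imported verbatim from \cite{CruzUribe-Martell-Perez}, so your attempt can only be compared with the standard proof there and with the analogous argument the paper runs for Theorem~\ref{teo: desigualdad mixta para Tb caso CZO}. Your treatment of the branch $u\in A_1$, $v\in A_\infty(u)$ is essentially that argument (Calder\'on--Zygmund decomposition with respect to $v\,dx$, Chebyshev with the weight $uv^{1-s}$ for the good part, cancellation plus the Lipschitz estimate \eqref{eq:prop del nucleo} and $u\in A_1$ for the bad part), and its steps check out. For the membership $uv^{1-s}\in A_s$ you could argue more directly, as the paper does on page~\pageref{pag: estimacion de I}: $v\in A_p(u)$ is equivalent to $v^{1-p'}\in A_{p'}(u)$, and then Lemma~\ref{lema: u en A1 y v en Ap(u) implica uv en Ap} gives $uv^{1-p'}\in A_{p'}$; this avoids the delicate reverse-H\"older bookkeeping you flag as the main obstacle.

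The genuine gap is the opening reduction: it is \emph{not} true that $u,v\in A_1$ implies $v\in A_\infty(u)$, so the first alternative in the hypothesis is left unproved. Indeed, $v\in A_\infty(u)$ together with $u\in A_1$ forces $uv\in A_\infty$ by Lemma~\ref{lema: u en A1 y v en Ap(u) implica uv en Ap}, and in particular $uv\in L^1_{\mathrm{loc}}$; but $u=v=|x|^{-n+\varepsilon}$ with $0<\varepsilon<n/2$ are two $A_1$ weights whose product $|x|^{-2n+2\varepsilon}$ is not even locally integrable near the origin. Your subsequent argument uses $uv\in A_\infty$ in an essential way (doubling of $uv\,dx$, Lemma~\ref{lema: lema fundamental}, the $A_s$ condition for $uv^{1-s}$), so it cannot be salvaged for the case $u,v\in A_1$: that case is genuinely different and is handled in \cite{CruzUribe-Martell-Perez} by a separate argument rather than by reduction to the $A_\infty(u)$ hypothesis.
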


\begin{proof}[Proof of Theorem~\ref{teo: desigualdad mixta para Tb caso CZO}]
We shall proceed by induction on $m$. The case corresponding to $m=1$ can be achieved by following similar lines as in the proof of \eqref{eq: desigualdad mixta para Tbm OCZ b en BMO} by substituting $\mathrm{BMO}$ condition by $\mathrm{Osc}_{\mathrm{exp}\, L^r}$, with $r\geq 1$. For more clarity, we shall provide the details for the case $m=2$. 

Let us assume, without loss of generality, that $f$ is nonnegative, bounded and has compact support. From \eqref{eq: representacion de Tb con b multilineal} we obtain that
	\[T_\mathbf{b}\left(\frac{f}{\|b_1\|_{\mathrm{Osc}_{\mathrm{exp}\, L^{r_1}}}\|b_2\|_{\mathrm{Osc}_{\mathrm{exp}\, L^{r_2}}}}\right)(x)=T_{\tilde{\mathbf{b}}}f(x),\]
	where $\tilde{\mathbf{b}}=(b_1/\|b_1\|_{\mathrm{Osc}_{\mathrm{exp}\, L^{r_1}}},b_2/\|b_2\|_{\mathrm{Osc}_{\mathrm{exp}\, L^{r_2}}})$. Therefore, it will be enough to achieve the estimate for the case in which $\|b_i\|_{\mathrm{Osc}_{\mathrm{exp}\, L^{r_i}}}=1$ for each $i$.

	The hypothesis on $v$ ensures that it belongs to $A_\infty$. We fix $t>0$ and perform the Calder\'on-Zygmund decomposition of $f$ at level $t$, with respect to the measure given by $d\mu(x)=v(x)\,dx$. This yields a disjoint collection of dyadic cubes $\{Q_j\}_{j=1}^\infty$ that verify
	\[t<\frac{1}{v(Q_j)}\int_{Q_j}fv\leq C_n t,\]
	for every $j$. We also split $f=g+h$, where
	\[g(x)=\left\{\begin{array}{ccl}
	f(x)&\textrm{ if }& x\in \mathbb{R}^n\backslash \Omega;\\
	f_{Q_j}^v&\textrm{ if }& x\in Q_j,
	\end{array}
	\right.\]
	where $f_{Q_j}^v$ denotes the average of $f$ over $Q_j$ with respect to the measure $\mu$ and $\Omega=\cup_j Q_j$. Also $h(x)=\sum_j h_j(x)$, where
	\[h_j(x)=(f(x)-f_{Q_j}^v)\mathcal{X}_{Q_j}(x).\]
	That is, every function $h_j$ is supported in $Q_j$ and  
	\begin{equation}\label{eq: integral cero de hj}
	\int_{Q_j}h_jv=0
	\end{equation}
	for every $j$. If we set $Q_j^*=3Q_j$ and $\Omega^*=\bigcup_j Q_j^*$, then we obtain
	\begin{align*}
	uv\left(\left\{x: \frac{|T_\mathbf{b}(fv)(x)|}{v(x)}>t\right\}\right)&= uv\left(\left\{x: \frac{|T_\mathbf{b}(gv)(x)|}{v(x)}>\frac{t}{2}\right\}\right)+uv\left(\left\{x: \frac{|T_\mathbf{b}(hv)(x)|}{v(x)}>\frac{t}{2}\right\}\right)\\
	&\leq uv\left(\left\{x: \frac{|T_\mathbf{b}(gv)(x)|}{v(x)}>\frac{t}{2}\right\}\right)+uv(\Omega^{*})\\
	&\quad +
	uv\left(\left\{x\in \mathbb{R}^n\backslash{\Omega^*}: \frac{|T_\mathbf{b}(hv)(x)|}{v(x)}>\frac{t}{2}\right\}\right)\\
	&=I+II+III.
	\end{align*}
	We begin with the estimate of $I$. Since $v\in A_\infty(u)$, there exists $p>1$ such that $v\in A_p(u)$. This implies that $v^{1-p'}\in A_{p'}(u)$ and by Lemma~\ref{lema: u en A1 y v en Ap(u) implica uv en Ap} we get $uv^{1-p'}\in A_{p'}$. By applying Tchebychev inequality and the strong type estimate for $T_{\mathbf{b}}$ proved in \cite{P-T} we have
	\label{pag: estimacion de I}
	\begin{align*}
	I&\leq \frac{1}{t^{p'}}\int_{\mathbb{R}^n}|T_{\mathbf{b}}(gv)|^{p'}uv^{1-p'}\\
	&\leq \frac{C}{t^{p'}}\int_{\mathbb{R}^n}g^{p'}uv\\
	&\leq \frac{C}{t}\left(\int_{\mathbb{R}^n\backslash \Omega}fuv+\sum_j \int_{Q_j}f_{Q_j}^vuv\right)\\
	&\leq \frac{C}{t}\left(\int_{\mathbb{R}^n\backslash \Omega}fuv+\sum_j \frac{uv(Q_j)}{v(Q_j)}\int_{Q_j}fv\right)\\
	&\leq  \frac{C}{t}\left(\int_{\mathbb{R}^n\backslash \Omega}fuv+C\sum_j \int_{Q_j}fuv\right)\\
	&\leq \frac{C}{t}\int_{\mathbb{R}^n}fuv
	\end{align*}
	by virtue of Lemma~\ref{lema: lema fundamental}. We turn now to the estimate of $II$. Since $uv\in A_\infty$, it is a doubling weight. Then
	\begin{align*}
	II=uv(\Omega^*)
	&=\leq C\sum_j \frac{uv(Q_j)}{v(Q_j)}\frac{1}{t}\int_{Q_j}fv\\
	&\leq C\sum_j \frac{1}{t}\int_{Q_j}fuv\leq \frac{C}{t}\int_{\mathbb{R}^n}fuv.	
	\end{align*}	
\label{pag: estimacion de II}
	For $III$, we fix constants $\lambda_{i,j}$ for $i=1,2$ and every $j$ that will be chosen later. By applying Proposition~\ref{propo: representacion de Tb con oscilaciones medias de bi} we write
	\begin{align*}
	T_{\mathbf b}(hv)(x)&=\sum_j T_{\mathbf b}(h_jv)(x)\\
	&=\sum_j \left(T((b_1-\lambda_{1,j})(b_2-\lambda_{2,j})h_jv)-(b_2(x)-\lambda_{2,j})T((b_1-\lambda_{1,j})h_jv)(x)\right.\\
	&\quad \left.- (b_1(x)-\lambda_{1,j})T((b_2-\lambda_{2,j})h_jv)(x)+(b_1(x)-\lambda_{1,j})(b_2(x)-\lambda_{2,j})T(h_jv)(x)\right).
	\end{align*}
    If $x\in \mathbb{R}^n\backslash \Omega^*$ we can use the integral representation of $T$ to write
	\begin{align*}
	(b_2(x)-\lambda_{2,j})T((b_1-\lambda_{1,j})h_jv)(x)&= \sum_j \int_{\mathbb{R}^n\backslash \Omega^*}(b_2(x)-\lambda_{2,j})(b_1(y)-\lambda_{1,j})h_j(y)K(x-y)v(y)\,dy\\
	&=\sum_j \int_{\mathbb{R}^n\backslash \Omega^*}(b_2(x)-b_2(y))(b_1(y)-\lambda_{1,j})h_j(y)K(x-y)v(y)\,dy\\
	&\quad +\sum_j \int_{\mathbb{R}^n\backslash \Omega^*}(b_2(y)-\lambda_{2,j})(b_1(y)-\lambda_{1,j})h_j(y)K(x-y)v(y)\,dy\\
	&= \sum_j T_{b_2}((b_1-\lambda_{1,j})h_jv)(x)+\\
	&\quad +\sum_jT((b_1-\lambda_{1,j})(b_2-\lambda_{2,j})h_jv)(x).
	\end{align*}
	Similarly,
	\[(b_1-\lambda_{1,j})T((b_2-\lambda_{2,j})h_jv)=\sum_j T_{b_1}((b_2-\lambda_{2,j})h_jv)+\sum_jT((b_1-\lambda_{1,j})(b_2-\lambda_{2,j})h_jv).\]
	Therefore, if $x\in \mathbb{R}^n\backslash \Omega^*$, we have
	\begin{align*}
	T_{\mathbf b} (hv)(x)&=-\sum_j \left(T((b_1-\lambda_{1,j})(b_2-\lambda_{2,j})h_jv)+(b_1(x)-\lambda_{1,j})(b_2(x)-\lambda_{2,j})T(h_jv)(x)\right)\\
	&\quad -\sum_j T_{b_1}((b_2-\lambda_{2,j})h_jv)(x)-\sum_j T_{b_2}((b_1-\lambda_{1,j})h_jv)(x),
	\end{align*}
	which gives
	\begin{align*}
	III&\leq	uv\left(\left\{x\in \mathbb{R}^n\backslash{\Omega^*}: \sum_j \frac{|T((b_1-\lambda_{1,j})(b_2-\lambda_{2,j})h_jv)(x)|}{v(x)}>\frac{t}{8}\right\}\right)\\
	&\quad + uv\left(\left\{x\in \mathbb{R}^n\backslash{\Omega^*}: \sum_j \frac{|(b_1(x)-\lambda_{1,j})(b_2(x)-\lambda_{2,j})T(h_jv)(x)|}{v(x)}>\frac{t}{8}\right\}\right)\\
	&\quad +	uv\left(\left\{x\in \mathbb{R}^n\backslash{\Omega^*}: \frac{|T_{b_1}(\sum_j(b_2-\lambda_{2,j})h_jv)(x)|}{v(x)}>\frac{t}{8}\right\}\right)\\
	&\quad +	uv\left(\left\{x\in \mathbb{R}^n\backslash{\Omega^*}: \frac{|T_{b_2}(\sum_j(b_1-\lambda_{1,j})h_jv)(x)|}{v(x)}>\frac{t}{8}\right\}\right)\\
	&=I_1+I_2+I_3+I_4.
	\end{align*}
	Let us first estimate $I_1$. Let us fix $\lambda_{i,j}=b_{i,j}$, where as usual 
	\[b_{i,j}=\frac{1}{|Q_j|}\int_{Q_j} b_i(x)\,dx \quad \textrm{ for }\quad i=1,2 \,\textrm{ and every }j.\]
	By Theorem~\ref{teo: desigualdad mixta para T} we have
	\begin{align*}
	I_1&\leq \frac{C}{t}\int_{\mathbb{R}^n}\sum_j |b_1(x)-b_{1,j}||b_2(x)-b_{2,j}||h_j(x)|u(x)v(x)\,dx\\
	&\leq\frac{C}{t}\sum_j \int_{Q_j} |b_1(x)-b_{1,j}||b_2(x)-b_{2,j}|f(x)u(x)v(x)\,dx\\
	&\quad +\frac{C}{t}\sum_j\left(\int_{Q_j} |b_1(x)-b_{1,j}||b_2(x)-b_{2,j}|u(x)v(x)\,dx\right)\left(\frac{1}{v(Q_j)}\int_{Q_j}f(y)v(y)\,dy\right)\\
	&=A+B.
	\end{align*}
	We start with $A$. Let $\varphi_i(t)=e^{t^{r_i}}-1$, for $i=1,2$. Observe that
	\[\varphi_1^{-1}(t)\varphi_2^{-1}(t)\Phi^{-1}(t)\lesssim (\log t)^{1/r_1+1/r_2}\frac{t}{(\log t)^{1/r}}=t,\]
	 if $t\geq e$.	It is well-known that for a Young function $\phi$ and an $A_\infty$ weight $w$ we have
	 \begin{equation}\label{eq: equivalencia de norma luxemburgo con integral}
	 \|f\|_{\phi,E,w}\approx \inf_{\tau>0}\left\{\tau+\frac{\tau}{w(E)}\int_E \phi\left(\frac{|f(x)|}{\tau}\right)w(x)\,dx\right\},
	 \end{equation}
	 for every measurable set $E$ with $0<|E|<\infty$ (see, for example, \cite{raoren} or \cite{KR} for the case $w=1$; the proof for $w\in A_\infty$ can be achieved by adapting the argument).
	 
	  Since $uv\in A_\infty$ we apply generalized H\"{o}lder inequality with the functions $\varphi_1$, $\varphi_2$ and $\Phi$, with respect to the measure $d\nu(x)=u(x)v(x)\,dx$. We also combine Lemmas~\ref{lema: control de norma exp L^r respecto de w por norma exp L^r} and~\ref{lema: lema fundamental} with \eqref{eq: equivalencia de norma luxemburgo con integral} to get
	 \begin{align*}
	 A&\leq \frac{C}{t}\sum_j (uv)(Q_j)\|b_1-b_{1,j}\|_{\mathrm{exp}\,L^{r_1},Q_j,uv}\|b_2-b_{2,j}\|_{\mathrm{exp}\,L^{r_2},Q_j,uv}\|f\|_{\Phi,Q_j,uv}\\
	 &\leq \frac{C}{t}\sum_j (uv)(Q_j)\|b_1-b_{1,j}\|_{\mathrm{exp}\,L^{r_1},Q_j}\|b_2-b_{2,j}\|_{\mathrm{exp}\,L^{r_2},Q_j}\|f\|_{\Phi,Q_j,uv}\\
	 &\leq \frac{C}{t}\sum_j (uv)(Q_j)\left(t+\frac{t}{uv(Q_j)}\int_{Q_j}\Phi\left(\frac{f(x)}{t}\right)u(x)v(x)\,dx\right)\\
	 &\leq \frac{C}{t}\sum_j \frac{(uv)(Q_j)}{v(Q_j)}\int_{Q_j}f(x)v(x)\,dx+C\sum_j\int_{Q_j}\Phi\left(\frac{f(x)}{t}\right)u(x)v(x)\,dx\\
	 &\leq C \sum_j\int_{Q_j}\Phi\left(\frac{f(x)}{t}\right)u(x)v(x)\,dx\\
	 &\leq C \int_{\mathbb{R}^n}\Phi\left(\frac{f(x)}{t}\right)u(x)v(x)\,dx.
	 \end{align*}
	 For the term $B$ we apply the same H\"{o}lder inequality as above and Lemmas~\ref{lema: control de norma exp L^r respecto de w por norma exp L^r} and~\ref{lema: lema fundamental}. This yields
	 \begin{align*}
	 B&\leq \sum_j \frac{(uv)(Q_j)}{v(Q_j)}\|b_1-b_{1,j}\|_{\mathrm{exp}\,L^{r_1},Q_j,uv}\|b_2-b_{2,j}\|_{\mathrm{exp}\,L^{r_2},Q_j,uv}\|\mathcal{X}_{Q_j}\|_{\Phi,Q_j,uv}\left(\int_{Q_j}fv\right)\\
	 &\leq C\sum_j\frac{(uv)(Q_j)}{v(Q_j)}\|b_1-b_{1,j}\|_{\mathrm{exp}\,L^{r_1},Q_j}\|b_2-b_{2,j}\|_{\mathrm{exp}\,L^{r_2},Q_j}\left(\int_{Q_j}fv\right)\\
	 &\leq C\sum_j\int_{Q_j}f(x)u(x)v(x)\,dx\\
	 &\leq C\int_{\mathbb{R}^n}f(x)u(x)v(x)\,dx.
	 \end{align*}
	
	We turn now to the estimation of $I_2$. Let $x_{Q_j}$ be the centre of the cube $Q_j$, for every $j$. By applying Tchebychev inequality, Tonelli theorem and \eqref{eq: integral cero de hj} we have that
	\begin{align*}
	I_2&\leq \frac{8}{t}\sum_j \int_{\mathbb{R}^n\backslash \Omega^*}|(b_1(x)-b_{1,j})(b_2(x)-b_{2,j})T(h_jv)(x)|u(x)\,dx\\
	&\leq \frac{8}{t}\sum_j \int_{\mathbb{R}^n\backslash Q_j^*}|b_1(x)-b_{1,j}||b_2(x)-b_{2,j}|\left|\int_{Q_j}h_j(y)v(y)(K(x-y)-K(x-x_{Q_j}))\,dy\right|u(x)\,dx\\
	&\leq \frac{8}{t}\sum_j \int_{Q_j} |h_j(y)|v(y)\left(\int_{\mathbb{R}^n\backslash Q_j^*}|b_1(x)-b_{1,j}||b_2(x)-b_{2,j}||K(x-y)-K(x-x_{Q_j})|u(x)\,dx\right)\,dy\\
	&=\frac{8}{t}\sum_j \int_{Q_j} |h_j(y)|v(y)F_j(y)\,dy.
	\end{align*}
	We shall prove that there exists $C>0$ that satisfies
	\begin{equation}\label{eq: teo: desigualdad mixta para Tb caso m=2 - eq1}
	F_j(y)\leq Cu(y), \textrm{ for }  y\in Q_j \textrm{ and every } j.
	\end{equation} 
	Fix $j$ and $y\in Q_j$. Let $\ell_j=\ell(Q_j)/2$, where $\ell(Q_j)$ is the length of the sides of $Q_j$, and $A_{j,k}=\{x: 2^k\ell_j\leq|x-x_{Q_j}|<2^{k+1}\ell_j\}$. By using the smoothness condition of the kernel $K$, we have
	\begin{align*}
	F_j(y)&\leq \sum_{k=1}^\infty \int_{A_{j,k}} |b_1(x)-b_{1,j}||b_2(x)-b_{2,j}|\frac{|y-x_{Q_j}|}{|x-x_{Q_j}|^{n+1}}u(x)\,dx\\
	&\leq C\frac{\ell_j}{(2^{k}\ell_j)^{n+1}}\int_{2^{k+1}Q_j}|b_1(x)-b_{1,j}||b_2(x)-b_{2,j}|u(x)\,dx\\
	&\leq \frac{C2^{-k}}{|2^{k+1}Q_j|}\int_{2^{k+1}Q_j}|b_1(x)-b_{1,j}||b_2(x)-b_{2,j}|u(x)\,dx.
	\end{align*}
	Let $b_{i,j}^{k}=|2^kQ_j|^{-1}\int_{2^kQ_j}b_i$ for $i=1,2$. Thus,
	\begin{align*}
	|b_1(x)-b_{1,j}||b_2(x)-b_{2,j}|&\leq \left|b_1(x)-b_{1,j}^{k+1}\right|\left|b_2(x)-b_{2,j}^{k+1}\right|+\left|b_1(x)-b_{1,j}^{k+1}\right|\left|b_{2,j}^{k+1}-b_{2,j}\right|\\
	&\quad +\left|b_{1,j}^{k+1}-b_{1,j}\right|\left|b_2(x)-b_{2,j}^{k+1}\right|+\left|b_{1,j}^{k+1}-b_{1,j}\right|\left|b_{2,j}^{k+1}-b_{2,j}\right|\\
	&\leq \left|b_1(x)-b_{1,j}^{k+1}\right|\left|b_2(x)-b_{2,j}^{k+1}\right|+C(k+1)\|b_2\|_{\mathrm{Osc}_{\mathrm{exp}\, L^{r_2}}}\left|b_1(x)-b_{1,j}^{k+1}\right|\\
	&\quad +C(k+1)\|b_1\|_{\mathrm{Osc}_{\mathrm{exp}\, L^{r_1}}}\left|b_2(x)-b_{2,j}^{k+1}\right|+C(k+1)^2\|\mathbf b\|,
	\end{align*}
	by virtue of Lemma~\ref{lema: promedios sobre cubos dilatados de b acotada por norma osc}. Then
	\begin{align*}
	F_j(y)&\leq C\sum_{k=1}^\infty \frac{2^{-k}}{|2^{k+1}Q_j|}\int_{2^{k+1}Q_j}\left|b_1(x)-b_{1,j}^{k+1}\right|\left|b_2(x)-b_{2,j}^{k+1}\right|u(x)\,dx\\
	&\quad +C\sum_{k=1}^\infty \frac{2^{-k}(k+1)}{|2^{k+1}Q_j|}\int_{2^{k+1}Q_j}\left|b_1(x)-b_{1,j}^{k+1}\right|u(x)\,dx\\
	&\quad +C\sum_{k=1}^\infty \frac{2^{-k}(k+1)}{|2^{k+1}Q_j|}\int_{2^{k+1}Q_j}\left|b_2(x)-b_{2,j}^{k+1}\right|u(x)\,dx\\
	&\quad +C\sum_{k=1}^\infty \frac{2^{-k}(k+1)^2}{|2^{k+1}Q_j|}\int_{2^{k+1}Q_j}u(x)\,dx.
	\end{align*}
	We now apply a generalized H\"{o}lder inequality to each term above, with respect to $d\zeta= u(x)\,dx$ and with the functions $\varphi_i(t)=e^{t^{r_i}}-1$, for $i=1,2$. This yields
	\[F_j(y)\leq C\left(\inf_{2^{k+1}(Q_j)}u\right)\sum_{k=1}^\infty2^{-k}(k+1)^2\leq Cu(y),\]
	which proves \eqref{eq: teo: desigualdad mixta para Tb caso m=2 - eq1}. By combining this estimate with Lemma~\ref{lema: lema fundamental} we get
	\begin{align*}
	I_2&\leq \frac{C}{t}\sum_j \int_{Q_j}f(y)u(y)v(y)\,dy+\frac{C}{t}\sum_j \frac{uv(Q_j)}{v(Q_j)}\int_{Q_j}f(y)v(y)\,dy\\
	&\leq \frac{C}{t}\int_{\mathbb{R}^n}f(y)u(y)v(y)\,dy.
	\end{align*}\label{pag: estimacion de integral de h_j u v}
	Let us conclude with the estimates of $I_3$ and $I_4$. We need to use here the case $m=1$, which can be proved by following the ideas for the $\mathrm{BMO}$ case (see \cite{Berra-Carena-Pradolini(M)}, Theorem 1). Then
	\begin{align*}
	I_3&\leq uv\left(\left\{x\in \mathbb{R}^n\backslash{\Omega^*}: \frac{|T_{b_1}(\sum_j(b_2-\lambda_{2,j})fv\mathcal{X}_{Q_j})(x)|}{v(x)}>\frac{t}{8}\right\}\right)\\
	&\quad +uv\left(\left\{x\in \mathbb{R}^n\backslash{\Omega^*}: \frac{|T_{b_1}(\sum_j(b_2-\lambda_{2,j})f_{Q_j}^vv\mathcal{X}_{Q_j})(x)|}{v(x)}>\frac{t}{8}\right\}\right)\\
	&=I_{3,1}+I_{3,2}.
	\end{align*}
	By taking $\Phi_1(t)=t(1+\log^+t)^{1/r_1}$ and $\lambda_{2,j}=b_{2,j}$, we have
	\begin{align*}
	I_{3,1}&\leq C\int_{\mathbb{R}^n}\Phi_1\left(\frac{\sum_j|b_2(x)-b_{2,j}|f(x)\mathcal{X}_{Q_j}(x)}{t}\right)u(x)v(x)\,dx\\
	&\leq C\sum_j\int_{Q_j}\Phi_1\left(\frac{|b_2(x)-b_{2,j}|f(x)}{t}\right)u(x)v(x)\,dx.
	\end{align*}
	Since $\Phi^{-1}(t)\varphi_2^{-1}(t)\lesssim \Phi_1^{-1}(t)$ for $t\geq e$, we obtain that 
	\[\Phi_1(st)\leq \Phi(s)+ \varphi_2(t).\]
	Notice that $1=\|b_2\|_{\mathrm{Osc}_{\mathrm{exp}\,L^{r_2}}}\geq \|b_2-b_{2,j}\|_{\mathrm{exp}\,L^{r_2},Q_j}\geq C\|b_2-b_{2,j}\|_{\mathrm{exp}\,L^{r_2},Q_j,uv}$. This implies that 
	\begin{align*}
	I_{3,1}&\leq C\sum_j\int_{Q_j}\left(\Phi\left(\frac{f(x)}{t}\right)+ \varphi_2(|b_2-b_{2,j}|)\right)u(x)v(x)\,dx\\
	&\leq \int_{\mathbb{R}^n}\Phi\left(\frac{f(x)}{t}\right)u(x)v(x)\,dx+C\sum_j uv(Q_j)\\
	&\leq \int_{\mathbb{R}^n}\Phi\left(\frac{f(x)}{t}\right)u(x)v(x)\,dx+\frac{C}{t}\sum_j \frac{uv(Q_j)}{v(Q_j)}\int_{Q_j}f(x)v(x)\,dx\\
	&\leq C\int_{\mathbb{R}^n}\Phi\left(\frac{f(x)}{t}\right)u(x)v(x)\,dx,
	\end{align*}
	where we have used Lemma~\ref{lema: lema fundamental}. On the other hand, since $f_{Q_j}^v/t\leq C$ we have that
	\begin{align*}
	I_{3,2}&\leq C\int_{\mathbb{R}^n}\Phi_1\left(\frac{\sum_j|b_2(x)-b_{2,j}|f_{Q_j}^v\mathcal{X}_{Q_j}(x)}{t}\right)u(x)v(x)\,dx\\
	&\leq C\sum_j\int_{Q_j}\Phi_1\left(C|b_2(x)-b_{2,j}|\right)u(x)v(x)\,dx\\
	&\leq C\sum_j\int_{Q_j}\left(\Phi(C)+\varphi_2(|b_2(x)-b_{2,j}|)\right)u(x)v(x)\,dx,
\end{align*}
and the estimate follows similarly as above. The term $I_4$ can be bounded by interchanging the roles of the components $b_1$ and $b_2$. The proof for $m=2$ is complete. 

\medskip

Suppose now that the result is true for every symbol with $k$ components and consider $\mathbf{b}=~(b_1,b_2,\dots,b_{k+1})$. Let us assume again that $\|b_i\|_{\mathrm{Osc}_{\mathrm{exp}\,L^{r_i}}}=1$ for every $i$. Fixed $t>0$, we perform the Calder\'on-Zygmund decomposition of $f$ with respect to the measure $d\mu=v(x)\,dx$ at level $t$, obtaining a disjoint collection of dyadic cubes $\{Q_j\}_j$. We also decompose $f=g+h$. If $Q_j^*=3Q_j$ and $\Omega^*=\cup_j Q_j^*$ we have that
	\begin{align*}
	uv\left(\left\{x: \frac{|T_\mathbf{b}(fv)(x)|}{v(x)}>t\right\}\right)&= uv\left(\left\{x: \frac{|T_\mathbf{b}(gv)(x)|}{v(x)}>\frac{t}{2}\right\}\right)+uv\left(\left\{x: \frac{|T_\mathbf{b}(hv)(x)|}{v(x)}>\frac{t}{2}\right\}\right)\\
	&\leq uv\left(\left\{x: \frac{|T_\mathbf{b}(gv)(x)|}{v(x)}>\frac{t}{2}\right\}\right)+uv(\Omega^{*})\\
	&\quad +
	uv\left(\left\{x\in \mathbb{R}^n\backslash{\Omega^*}: \frac{|T_\mathbf{b}(hv)(x)|}{v(x)}>\frac{t}{2}\right\}\right)\\
	&=I+II+III.
	\end{align*}
	The estimate of $I$ follows from the strong $(p,p)$ type of $T_\mathbf{b}$ for $A_p$ weights proved in \cite{P-T}, and for $II$ we proceed as in the case $m=2$. 
	
	For $III$ we use Proposition~\ref{propo: Tb en termino de conmutadores con simbolos de menor orden}. If $\lambda_{i,j}$ are constants to be chosen for $1\leq i\leq k+1$ and $j\in\mathbb{N}$,~then
	\begin{align*}
	T_{\mathbf b} (hv)(x)&=\sum_j T_{\mathbf b} (h_jv)(x)\\ 
	&=\sum_j\left(\prod_{i=1}^{k+1}(b_i(x)-\lambda_{i,j})\right)T(h_jv)(x)-\sum_j T\left(\left(\prod_{i=1}^{k+1}(b_i-\lambda_{i,j})\right)h_jv\right)(x)\\
	&\quad 
	-\sum_{\sigma\in S_{k+1},\,0<|\sigma|<k+1 }T_{\mathbf b_\sigma}\left(\sum_j \left(\prod_{i=1}^{k+1}(b_i-\lambda_{i,j})^{\bar\sigma_i}\right)h_jv\right)(x),
	\end{align*}\label{pag: estimacion de III caso general}
    which implies that
	\begin{align*}
	III&\leq 	uv\left(\left\{x\in \mathbb{R}^n\backslash{\Omega^*}: \sum_j \left(\prod_{i=1}^{k+1}|b_i(x)-\lambda_{i,j}|\right)\frac{|T(h_jv)(x)|}{v(x)}>\frac{t}{2^{k+2}}\right\}\right)\\
	&\quad +  uv\left(\left\{x\in \mathbb{R}^n\backslash{\Omega^*}: \sum_j \frac{\left|T\left(\left(\prod_{i=1}^{k+1}(b_i-\lambda_{i,j})\right)h_jv\right)(x)\right|}{v(x)}>\frac{t}{2^{k+2}}\right\}\right)\\
	&\quad +\sum_{\sigma\in S_{k+1}, \,0<|\sigma|<k+1}uv\left(\left\{x\in \mathbb{R}^n\backslash{\Omega^*}: \frac{\left|T_{\mathbf b_\sigma}\left(\sum_j\left(\prod_{i=1}^{k+1}(b_i-\lambda_{i,j})^{\bar\sigma_i}\right)h_jv\right)(x)\right|}{v(x)}>\frac{t}{2^{k+2}}\right\}\right)\\
	&= I_1+I_2+I_3.
	\end{align*}
	Let us first estimate $I_1$. Take $\lambda_{i,j}=b_{i,j}$ for every $j$ and every $1\leq i\leq k+1$. By applying Tchebychev inequality we have that
	\begin{align*}
	I_1&\leq \sum_j \int_{\mathbb{R}^n\backslash \Omega^*}\left(\prod_{i=1}^{k+1}|b_i(x)-b_{i,j}|\right)\left|\int_{Q_j}h_j(y)v(y)K(x-y)\,dy\right|u(x)\,dx\\
	&\leq \sum_j \int_{Q_j}|h_j(y)|v(y)\left(\int_{\mathbb{R}^n\backslash Q_j^*}|K(x-y)-K(x-x_{Q_j})|\left(\prod_{i=1}^{k+1}|b_i(x)-b_{i,j}|\right)u(x)\,dx\right)\,dy\\
	&=\sum_j \int_{Q_j}|h_j(y)|v(y)F_{j,k}(y)\,dy.
	\end{align*}\label{pag: estimacion de I_1 caso general}
	We shall prove that $F_{j,k}(y)\leq Cu(y)$, for $y\in Q_j$ and $C$ independent of $j$. Indeed, by using the smoothness condition of $K$ \eqref{eq:prop del nucleo} we get
	\begin{align*}
	F_{j,k}(y)&\leq \sum_{\ell=0}^\infty \int_{A_{j,\ell}} \frac{|y-x_{Q_j}|}{|x-x_{Q_j}|^{n+1}}\left(\prod_{i=1}^{k+1}|b_i(x)-b_{i,j}|\right)u(x)\,dx\\
	&\leq C\sum_{\ell=0}^\infty \frac{2^{-\ell}}{|2^{\ell+1}Q_j|}\int_{2^{\ell+1}Q_j} \left(\prod_{i=1}^{k+1}|b_i(x)-b_{i,j}|\right)u(x)\,dx.
	\end{align*}
	Let $b_{i,j}^{\ell}=|2^{\ell}Q_j|^{-1}\int_{2^{\ell}Q_j}b_i$. By virtue of Lemmas~\ref{lema: promedios sobre cubos dilatados de b acotada por norma osc} and~\ref{lema: producto de sumas} we obtain
	\begin{align*}
	\prod_{i=1}^{k+1}|b_i(x)-b_{i,j}|&= \sum_{\sigma\in S_{k+1}}\prod_{i=1}^{k+1}|b_i-b_{i,j}^{\ell+1}|^{\sigma_i}|b_{i,j}^{\ell+1}-b_{i,j}|^{\bar \sigma_i}\\
	&\leq C(\ell+1)^{k+1}\prod_{i=1}^{k+1}|b_i-b_{i,j}^{\ell+1}|^{\sigma_i}.
	\end{align*}
	We apply now generalized H\"{o}lder inequality with functions $\varphi_i(t)=e^{t^{r_i}}-1$, $1\leq i\leq k+1$, with respect to the measure $d\zeta=u(x)\,dx$. This yields
	\begin{align*}
	F_{j,k}(y)&\leq C\sum_{\ell=0}^\infty\frac{u(2^{\ell+1}Q_j)}{|2^{\ell+1}Q_j|}2^{-\ell}(\ell+1)^{k+1}\times\\
	&\quad \times \sum_{\sigma\in S_{k+1}}\left(\prod_{i: \sigma_i=1}\|b_i-b_{i,j}^{\ell+1}\|_{\varphi_i,2^{\ell+1}Q_j,u}\right)\left(\prod_{i: \sigma_i=0}\|\mathcal{X}_{2^{\ell+1}Q_j}\|_{\varphi_i,2^{\ell+1}Q_j,u}\right)\\
	&\leq C[u]_{A_1} \sum_{\ell=0}^\infty2^{-\ell}(\ell+1)^{k+1}u(y)\\
	&=Cu(y).
	\end{align*}
	From here the estimate follows in the same way as in page~\pageref{pag: estimacion de integral de h_j u v}.
	
	Let us focus now on $I_2$. Considering again $\lambda_{i,j}=b_{i,j}$, Theorem~\ref{teo: desigualdad mixta para T} allows us to get
	\begin{align*}
	I_2&\leq \frac{C}{t}\sum_j \int_{Q_j} \left(\prod_{i=1}^{k+1} |b_i(x)-b_{i,j}|\right)|h_j(x)|u(x)v(x)\,dx\\
	&\leq \frac{C}{t}\sum_j \int_{Q_j} \left(\prod_{i=1}^{k+1} |b_i(x)-b_{i,j}|\right)f(x)u(x)v(x)\,dx\\
	&\quad +\frac{C}{t}\sum_j \int_{Q_j} \left(\prod_{i=1}^{k+1} |b_i(x)-b_{i,j}|\right)f_{Q_j}^vu(x)v(x)\,dx\\
	&=I_2^1+I_2^2.
	\end{align*}\label{pag: estimacion de I2}
	For $I_2^1$, observe that
	\[\left(\prod_{i=1}^{k+1}\varphi_i^{-1}(t)\right)\Phi^{-1}(t)\lesssim t,\]
	for $t\geq e$. By applying generalized H\"{o}lder inequality with respect to $uv$, Lemma~\ref{lema: control de norma exp L^r respecto de w por norma exp L^r} and \eqref{eq: equivalencia de norma luxemburgo con integral} we obtain that
	\begin{align*}
	I_2^1&\leq \frac{C}{t}\sum_j (uv)(Q_j)\left(\prod_{i=1}^{k+1}\|b_i-b_{i,j}\|_{\varphi_i,Q_j,uv}\right)\|f\|_{\Phi,Q_j,uv}\\
	&\leq \frac{C}{t}\sum_j(uv)(Q_j)\left(\prod_{i=1}^{k+1}\|b_i-b_{i,j}\|_{\varphi_i,Q_j}\right)\left\{t+\frac{t}{(uv)(Q_j)}\int_{Q_j}\Phi\left(\frac{f(x)}{t}\right)u(x)v(x)\,dx\right\}\\
	&\leq C\sum_j \left(\frac{uv(Q_j)}{v(Q_j)}\frac{1}{t}\int_{Q_j}f(x)v(x)\,dx+\int_{Q_j}\Phi\left(\frac{f(x)}{t}\right)u(x)v(x)\,dx\right)\\
	&\leq C\int_{\mathbb{R}^n}\Phi\left(\frac{f(x)}{t}\right)u(x)v(x)\,dx,
	\end{align*}
	by virtue of Lemma~\ref{lema: lema fundamental}.
	For $I_2^2$ we apply again  H\"{o}lder inequality to get
	\begin{align*}
	I_2^2&\leq \frac{C}{t}\sum_j\frac{(uv)(Q_j)}{v(Q_j)}\left(\int_{Q_j}fv\right)\left(\prod_{i=1}^{k+1}\|b_i-b_{i,j}\|_{\varphi_i,Q_j,uv}\right)\|\mathcal{X}_{Q_j}\|_{\Phi,Q_j,uv}\\
	&\leq \sum_j\frac{C}{t}\left(\int_{Q_j}fuv\right)\left(\prod_{i=1}^{k+1}\|b_i-b_{i,j}\|_{\varphi_i,Q_j}\right)\\
	&\leq \frac{C}{t}\left(\int_{\mathbb{R}^n}fuv\right).
	\end{align*}
	Finally, we estimate $I_3$. Fix $\sigma\in S_{k+1}$ such that $0<|\sigma|<k+1$. Let $\Phi_{\sigma}(t)=t(1+\log^+t)^{1/r_{\sigma}}$, where
	\[\frac{1}{r_\sigma}=\sum_{i: \sigma_i=1}\frac{1}{r_i}.\]
	By using the inductive hypothesis we conclude
	\[uv\left(\left\{ \frac{\left|T_{\mathbf b_\sigma}\left(\sum_j\left(\prod_{i=1}^{k+1}(b_i-\lambda_{i,j})^{\bar\sigma_i}\right)h_jv\right)(x)\right|}{v(x)}>\frac{t}{2^{k+1}}\right\}\right)\leq I_3^1(\sigma)+I_3^2(\sigma),\]
	where
	\[I_3^1(\sigma)=C\sum_j\int_{Q_j}\Phi_\sigma\left(\frac{\left(\prod_{i=1}^{k+1}|b_i-\lambda_{i,j}|^{\bar\sigma_i}\right)f}{t}\right)uv\]
	and
	\[I_3^2(\sigma)=C\sum_j\int_{Q_j}\Phi_\sigma\left(\frac{\left(\prod_{i=1}^{k+1}|b_i-\lambda_{i,j}|^{\bar\sigma_i}\right)f_{Q_j}^v}{t}\right)uv.\]\label{pag: estimacion de I3 sigma}
	To estimate $I_3^1(\sigma)$ observe that
	\[\Phi^{-1}(t)\left(\prod_{i:\sigma_i=0}\varphi_i^{-1}(t)\right)\approx \frac{t}{(1+\log^+t)^{1/r}}(1+\log^+t)^{1/r-1/r_\sigma}=\frac{t}{(1+\log^+t)^{1/r_\sigma}}=\Phi_\sigma^{-1}(t),\]
	which implies that
	\[\Phi_\sigma\left(\frac{\left(\prod_{i=1}^{k+1}|b_i-\lambda_{i,j}|^{\bar\sigma_i}\right)f}{t}\right)\leq \Phi\left(\frac{f}{t}\right)+\sum_{i: \sigma_i=0}\varphi_i(|b_i-b_{i,j}|).\]
	Since $1=\|b_i\|_{\mathrm{Osc}_{\mathrm{exp}\,L^{r_i}}}\geq \|b_i-b_{i,j}\|_{\mathrm{exp}\,L^{r_i},Q_j}\geq C\|b_i-b_{i,j}\|_{\mathrm{exp}\,L^{r_i},Q_j,uv}$ for every $i$ such that $\sigma_i=0$, we have
	\begin{align*}
	I_3^1(\sigma)&\leq C\sum_j\int_{Q_j}\Phi\left(\frac{f}{t}\right)uv+\sum_j\sum_{i: \sigma_i=0}\int_{Q_j}\varphi_i(|b_i-b_{i,j}|)uv\\
	&\leq C\int_{\mathbb{R}^n}\Phi\left(\frac{f}{t}\right)uv+C\sum_j \frac{(uv)(Q_j)}{v(Q_j)}\int_{Q_j}\frac{f}{t}v\\
	&\leq C\int_{\mathbb{R}^n}\Phi\left(\frac{f}{t}\right)uv,
	\end{align*}
	by Lemma~\ref{lema: lema fundamental}.
	On the other hand, since $f_{Q_j}^v/t\leq C$
	\begin{align*}
	I_3^2(\sigma)&\leq C\sum_j uv(Q_j)+C\sum_j\sum_{i: \sigma_i=0} \int_{Q_j}\varphi_i(|b_i-b_{i,j}|)uv\\
	&\leq C\sum_j\frac{(uv)(Q_j)}{v(Q_j)}\frac{1}{t}\int_{Q_j}fv+C(uv)(Q_j)\\
	&\leq C\int_{\mathbb{R}^n}\Phi\left(\frac{f}{t}\right)uv.
	\end{align*}	
	Therefore the result holds for every symbol $\mathbf b$ with $k+1$ components. The proof of Theorem~\ref{teo: desigualdad mixta para Tb caso CZO} is complete. \qedhere
\end{proof}

\section{Operators with kernel of Hörmander type}\label{seccion: caso Hormander}

This section will be devoted to prove the strong $(p,p)$ type of $T_\mathbf{b}$ on the Hörmander setting as well as a mixed inequality for these operators. When $b$ is a single symbol belonging to $\mathrm{BMO}$ and $T=K*f$, the first result follows by combining a Coifman type inequality proved in \cite{LRdlT} with the weighted strong $(p,p)$ type of $M_\varphi$ obtained in \cite{B-D-P}, where $\varphi$ is a Young function with certain properties. One of the keys for the proof of the aforementioned Coifman inequality is a pointwise relation between the lower order commutators of $T_b^m$ and the sharp-$\delta$ maximal function $M^{\#}_\delta f(x)=((M^{\#} f^\delta )(x))^{1/\delta}$, where
\[M^{\#} f(x)=\sup_{Q\ni x} \frac{1}{|Q|}\int_Q |f(y)-f_Q|\,dy\] 
and $f_Q=|Q|^{-1}\int_Q f$. It is well-known (see, for example, \cite{javi}) that 
\begin{equation}\label{eq: equivalencia M sharp con infimo}
M^{\#}f(x)\approx \sup_Q \inf_{a\in\mathbb{C}}\frac{1}{|Q|}\int_Q |f(y)-a|\,dy
\end{equation}
and that for every $0<p<\infty$ and $w\in A_\infty$ the inequality

\begin{equation}\label{eq: desigualdad de Fefferman-Stein}
\int_{\mathbb{R}^n} (Mf(x))^pw(x)\,dx\leq C\int_{\mathbb{R}^n} (M^{\#}f(x))^pw(x)\,dx,
\end{equation}
holds whenever the left-hand side is finite.

The following proposition gives the pointwise relation involving $M^{\#}_\delta$ on the multilinear context. 

   \begin{propo}\label{propo: M sharp delta en terminos de conmutadores de orden menor}
    	Let $m\in\mathbb{N}$, $r_i\geq 1$ for $1\leq i\leq m$ and $1/r=\sum_{i=1}^m 1/r_i$. Let $\eta, \phi$ be Young functions that verify $\bar{\eta}^{-1}(t)\phi^{-1}(t)(\mathrm{log}\, t)^{1/r}\lesssim t$, for $t\geq e$. Let $T$ be an operator with kernel $K\in H_{\phi,m}$ and $\mathbf{b}=(b_1,b_2,\dots,b_m)$, where $b_i\in \mathrm{Osc}_{\mathrm{exp}\,L^{r_i}}$ for $1\leq i\leq m$. Then the inequality
    	\begin{equation}\label{eq: estimacion M^sharp_delta para Tbm con nucleo Hormander}
    	M^{\#}_\delta(T_\mathbf{b}f)(x)\leq C\|\mathbf{b}\|M_{\bar\eta}f(x)+C\!\!\sum_{\sigma\in S_m,\,|\sigma|<m }\|\mathbf{b}_{\bar \sigma}\|M_{\varepsilon}(T_{\mathbf{b}_\sigma}f)(x)
    	\end{equation}
    	 holds for every $0<\delta<\varepsilon<1$.
    \end{propo}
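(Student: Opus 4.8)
The plan is to argue pointwise: fix $x\in\mathbb{R}^n$ and a cube $Q\ni x$, set $\lambda_i=(b_i)_Q$, and bound $\big(\tfrac1{|Q|}\int_Q|T_{\mathbf b}f-c_Q|^\delta\big)^{1/\delta}$ for a suitable constant $c_Q$; since $0<\delta<1$ we have $\big||a|^\delta-|b|^\delta\big|\le|a-b|^\delta$, so taking the supremum over all $Q\ni x$ and invoking the equivalence \eqref{eq: equivalencia M sharp con infimo} yields the claim (we may assume $f$ bounded with compact support, as usual). The starting point is Proposition~\ref{propo: Tb en terminos de conmutadores con simbolos de menor orden aplicado a f} with these $\lambda_i$, which expresses $T_{\mathbf b}f$ as $(-1)^m T\big(\prod_{i=1}^m(b_i-\lambda_i)f\big)$ plus a sum over $\sigma\in S_m$ with $|\sigma|<m$ of terms $\pm\big(\prod_{i=1}^m(b_i-\lambda_i)^{\bar\sigma_i}\big)T_{\mathbf b_\sigma}f$ (the term $\sigma=(0,\dots,0)$ being $\pm(\prod_i(b_i-\lambda_i))Tf$). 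We will only subtract a constant from the first term.

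For the $\sigma$-terms with $|\sigma|<m$ we estimate locally on $Q$. Fix $q>1$ with $q\delta<\varepsilon$ (possible since $\varepsilon/\delta>1$) and apply Hölder with exponents $q',q$ to $\tfrac1{|Q|}\int_Q\big|\prod_{i:\sigma_i=0}(b_i-\lambda_i)\big|^\delta|T_{\mathbf b_\sigma}f|^\delta$. After raising to the power $1/\delta$, the $T_{\mathbf b_\sigma}f$-factor is $\big(\tfrac1{|Q|}\int_Q|T_{\mathbf b_\sigma}f|^{q\delta}\big)^{1/(q\delta)}\le M_\varepsilon(T_{\mathbf b_\sigma}f)(x)$ because $q\delta\le\varepsilon$ and $x\in Q$, while the oscillation factor is $\big(\tfrac1{|Q|}\int_Q|\prod_{i:\sigma_i=0}(b_i-\lambda_i)|^{q'\delta}\big)^{1/(q'\delta)}$, which by Hölder in the index $i$ and the John--Nirenberg type bound $\big(\tfrac1{|Q|}\int_Q|b_i-(b_i)_Q|^s\big)^{1/s}\lesssim s^{1/r_i}\|b_i\|_{\mathrm{Osc}_{\exp L^{r_i}}}$ is controlled by $\prod_{i:\sigma_i=0}\|b_i\|_{\mathrm{Osc}_{\exp L^{r_i}}}=\|\mathbf b_{\bar\sigma}\|$ (the implicit constant depends only on $q,\delta,m$ and the $r_i$). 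This produces the second sum in \eqref{eq: estimacion M^sharp_delta para Tbm con nucleo Hormander}.

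For the main term, split $f=f\mathcal X_{\tilde Q}+f\mathcal X_{\tilde Q^c}=:f_1+f_2$, where $\tilde Q=NQ$ with $N$ a fixed large dilation constant (chosen so that the dyadic annuli covering $\tilde Q^c$ lie at scales admissible in the Hörmander condition, i.e.\ larger than $c\,\mathrm{diam}(Q)$), set $g_\ell=\prod_i(b_i-\lambda_i)f_\ell$, and take $c_Q=(-1)^m T(g_2)(x_Q)$, $x_Q$ the centre of $Q$ (well defined since $x_Q\notin\supp g_2$). Then $|T(g)-c_Q|\le|T(g_1)|+|T(g_2)(\cdot)-T(g_2)(x_Q)|$. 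Since $K\in H_{\phi}\subseteq H_1$ and $T$ is bounded on $L^2$, $T$ is of weak type $(1,1)$, so Kolmogorov's inequality gives $\big(\tfrac1{|Q|}\int_Q|T(g_1)|^\delta\big)^{1/\delta}\lesssim\tfrac1{|\tilde Q|}\int_{\tilde Q}\prod_i|b_i-\lambda_i|\,|f|$; a generalized Hölder inequality with $\varphi_i(t)=e^{t^{r_i}}-1$ and $\Phi$, Lemma~\ref{lema: promedios sobre cubos dilatados de b acotada por norma osc} to pass from $(b_i)_Q$ to $(b_i)_{\tilde Q}$, and the inclusion $\|h\|_{\Phi,\tilde Q}\lesssim\|h\|_{\bar\eta,\tilde Q}$ (a consequence of $\bar\eta^{-1}(t)\lesssim\Phi^{-1}(t)$, which follows from the standing hypothesis together with $\phi^{-1}(t)\gtrsim1$ for large $t$) bound this by $C\|\mathbf b\|M_{\bar\eta}f(x)$.

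The global part is the core of the argument. Writing $T(g_2)(y)-T(g_2)(x_Q)$ through the kernel, recentring at $x_Q$, and decomposing $\tilde Q^c$ into dyadic annuli, one bounds $|T(g_2)(y)-T(g_2)(x_Q)|$, uniformly in $y\in Q$, by
\[\sum_{k}(2^kR)^n\big\|K(\cdot-v)-K(\cdot)\big\|_{\phi,\,|u|\sim 2^kR}\,\Big\|\prod_i|b_i-\lambda_i|\,|f|\Big\|_{\bar\phi,B_k},\]
where $v=x_Q-y$, $R\sim\ell(\tilde Q)$ and $B_k=B(x_Q,2^{k+1}R)\supseteq Q$. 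A second generalized Hölder inequality splits the $\bar\phi$-norm as $\prod_i\|b_i-\lambda_i\|_{\exp L^{r_i},B_k}\,\|f\|_{\bar\eta,B_k}$, and this is exactly where the hypothesis enters, since $\prod_i\varphi_i^{-1}(t)\,\bar\eta^{-1}(t)\approx(\log t)^{1/r}\bar\eta^{-1}(t)\lesssim t/\phi^{-1}(t)\approx\bar\phi^{-1}(t)$. Using Lemma~\ref{lema: promedios sobre cubos dilatados de b acotada por norma osc} to get $\|b_i-\lambda_i\|_{\exp L^{r_i},B_k}\lesssim(k+1)\|b_i\|_{\mathrm{Osc}_{\exp L^{r_i}}}$ and $\|f\|_{\bar\eta,B_k}\le M_{\bar\eta}f(x)$, the sum is dominated by $\|\mathbf b\|M_{\bar\eta}f(x)\sum_k(2^kR)^n(k+1)^m\|K(\cdot-v)-K(\cdot)\|_{\phi,\,|u|\sim 2^kR}$, which is finite precisely because $K\in H_{\phi,m}$. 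Hence $\big(\tfrac1{|Q|}\int_Q|T(g_2)(\cdot)-T(g_2)(x_Q)|^\delta\big)^{1/\delta}\lesssim\|\mathbf b\|M_{\bar\eta}f(x)$, and collecting the pieces proves the proposition. I expect the main obstacle to be the two-step Hölder bookkeeping in this global estimate: arranging matters so that the kernel is measured in $H_{\phi,m}$ with the matching $k^m$ weight while the factor $f$ is left in $\bar\eta$ and the oscillations are absorbed into $\|\mathbf b\|$; a minor but genuinely needed point is choosing the dilation $\tilde Q$ large enough to meet the scale restriction in the Hörmander condition.
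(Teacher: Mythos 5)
Your proof is correct and follows essentially the same route as the paper: it starts from Proposition~\ref{propo: Tb en terminos de conmutadores con simbolos de menor orden aplicado a f} with $\lambda_i=(b_i)_Q$, splits off the lower-order commutator terms, and for the remaining term applies Kolmogorov on the local piece and the $H_{\phi,m}$ condition with the two-layer generalized H\"older on the far piece, exactly as the paper does. The only cosmetic differences (ordinary H\"older plus an $L^s$ John--Nirenberg-type bound in place of a single generalized H\"older with exponential Young functions for the $\sigma$-terms, and routing the local estimate through $\Phi$ and the observation $\bar\eta^{-1}\lesssim\Phi^{-1}$ rather than one $(m+2)$-fold generalized H\"older) produce the same bounds and do not change the argument in any substantive way.
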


\begin{proof}
	Fix $0<\delta<\varepsilon<1$. By virtue of Proposition~\ref{propo: Tb en terminos de conmutadores con simbolos de menor orden aplicado a f} we can write
	\[T_\mathbf{b}f(x)=(-1)^mT\left(\left(\prod_{i=1}^m(b_i-\lambda_i)\right)f\right)(x)+\sum_{\sigma\in S_m,\, |\sigma|<m}(-1)^{m-1-|\sigma|}\prod_{i=1}^m(b_i-\lambda_i)^{\bar\sigma_i}T_{\mathbf{b}_\sigma}f(x),\]
	where $\lambda_i$ are constants to be chosen later for $1\leq i\leq m$.
	
	By homogeneity we can assume, without loss of generality, that $\|b_i\|_{\mathrm{Osc}_{\mathrm{exp}\,L^{r_i}}}=1$ for every $1\leq i\leq m$. Fix $x\in\mathbb{R}^n$ and $B=B(x_B, R)$ a ball that contains $x$. By using \eqref{eq: equivalencia M sharp con infimo}, we decompose $f=f\mathcal{X}_{2B}+f\mathcal{X}_{(2B)^c}=f_1+f_2$ and take a constant $a_B$ that we shall choose properly. Thus
	\begin{align*}
	\left(\frac{1}{|B|}\int_B \left||T_{\mathbf{b}}f(y)|^\delta-|a_B|^\delta\right|\,dy\right)^{1/\delta}&\leq \left(\frac{1}{|B|}\int_B |T_{\mathbf{b}}f(y)-a_B|^\delta\,dy\right)^{1/\delta}\\
	&\leq C_\delta\sum_{\sigma\in S_m,\,|\sigma|<m}\left(\frac{1}{|B|}\int_B \left(\prod_{i=1}^m|b_i-\lambda_i|^{\delta\bar \sigma_i}\right)|T_{\mathbf{b}_\sigma}f|^\delta\right)^{1/\delta}+\\
	&\quad  +C_\delta\left(\frac{1}{|B|}\int_B \left|T\left(\left(\prod_{i=1}^m(b_i-\lambda_i)\right)f_1\right)(y)\right|^\delta dy\right)^{1/\delta}+\\
	&\quad  +C_\delta\left(\frac{1}{|B|}\int_B \left|T\left(\left(\prod_{i=1}^m(b_i-\lambda_i)\right)f_2\right)(y)-a_B\right|^\delta dy\right)^{1/\delta}\\
	&=I+II+III.
	\end{align*}
	Let us first estimate $I$. If $q=\varepsilon/\delta>1$, then for every $\sigma\in S_m$ with at least one component equal to 0 we have that for $t\geq t_0$
	\[t^{1/q}\prod_{i: \sigma_i=0}(\mathrm{log}\,t)^{1/r_i}\leq t^{1/q}(\mathrm{log}\,t)^{1/r}\lesssim t.\]
	By taking $\lambda_i = (b_i)_{2B}$ and applying generalized Hölder inequality, we get
	\begin{align*}
	I&\leq \sum_{\sigma\in S_m,\, |\sigma|<m}\left(\prod_{i:\sigma_i=0}\left\|(b_i-(b_i)_{2B})^\delta\right\|_{\mathrm{exp}\,L^{r_i},B}^{1/\delta}\right)\left(\frac{1}{|B|}\int_B |T_{\mathbf{b}_\sigma}f(y)|^{\delta q}dy\right)^{1/(\delta q)}\\
	&\leq \sum_{\sigma\in S_m,\, |\sigma|<m}\left(\prod_{i:\sigma_i=0}\left\|b_i-(b_i)_{2B}\right\|_{\mathrm{exp}\,L^{r_i},B}\right)M_{\varepsilon}\left(T_{\mathbf{b}_\sigma}f\right)(x)\\
	&\leq C \sum_{\sigma\in S_m,\, |\sigma|<m} M_\varepsilon(T_{\mathbf{b}_\sigma}f)(x).
	\end{align*}
	
	We shall now estimate $II$. Since $H_{\phi}\subset H_1$, $T$ is of weak $(1,1)$ type. By applying Kolmogorov inequality we have that
	\begin{align*}
	II&\leq \frac{C}{|B|}\int_B\left(\prod_{i=1}^m|b_i(y)-(b_i)_{2B}|\right)|f(y)|\,dy\\
	&\leq \frac{C}{|2B|}\int_{2B}\left(\prod_{i=1}^m|b_i(y)-(b_i)_{2B}|\right)|f(y)|\,dy\\
	&\leq C \left(\prod_{i=1}^m \|b_i-(b_i)_{2B}\|_{\mathrm{exp}\,L^{r_i}, 2B}\right)\|\mathcal{X}_{2B}\|_{\phi, 2B}M_{\bar\eta}f(x)\\
	&\leq CM_{\bar\eta}f(x),
	\end{align*}
	where we have used generalized Hölder inequality with functions $\varphi_i(t)=e^{t^{1/r_i}}-1$, $1\leq i\leq m$, $\phi(t)$ and $\bar\eta$, since
	\[\bar{\eta}^{-1}(t)\phi^{-1}(t)(\mathrm{log}\, t)^{1/r}\lesssim t\]
	for $t\geq e$.
	
	Finally, we estimate $III$. Pick $a_B=T\left(\left(\prod_{i=1}^m (b_i-(b_i)_{2B})\right)f_2\right)(x_B)$. Jensen inequality yields
	\begin{align*}
	III&\leq \frac{1}{|B|}\int_B \left|T\left(\left(\prod_{i=1}^m\left(b_i-(b_i)_{2B}\right)\right)f_2\right)(y)-T\left(\left(\prod_{i=1}^m\left(b_i-(b_i)_{2B}\right)\right)f_2\right)(x_B)\right|\,dy\\
	&=\frac{1}{|B|}\int_B F(y)\,dy.
	\end{align*}
	We define, for $j\geq 1$, the sets
	\[A_j=\{y: 2^jR\leq |x_B-y|<2^{j+1}R\}.\]
	By using the integral representation of $T$, we have
	\[F(y)=\int_{\mathbb{R}^n}\prod_{i=1}^m |b_i(z)-(b_i)_{2B}||f_2(z)||K(y-z)-K(x_B-z)|\,dz.\]
	Lemmas~\ref{lema: promedios sobre cubos dilatados de b acotada por norma osc} and~\ref{lema: producto de sumas} imply
	\begin{align*}
	\prod_{i=1}^m |b_i(z)-(b_i)_{2B}|&=\prod_{i=1}^m \left(|b_i(z)-(b_i)_{2^{j+1}B}|+|(b_i)_{2^{j+1}B}-(b_i)_{2B}|\right)\\
	&\leq \prod_{i=1}^m \left(|b_i(z)-(b_i)_{2^{j+1}B}|+Cj\right)\\
	&=\sum_{\sigma\in S_m}\prod_{i=1}^m \left|b_i(z)-(b_i)_{2^{j+1}B}\right|^{\sigma_i}(Cj)^{\bar\sigma_i}\\
	&\leq C \sum_{\sigma\in S_m}j^{m-|\sigma|}\prod_{i=1}^m |b_i(z)-(b_i)_{2^{j+1}B}|^{\sigma_i}.
	\end{align*}
	Thus, by applying again Hölder inequality
	\begin{align*}
	F(y)&\lesssim \sum_{\sigma\in S_m}\sum_{j=1}^\infty j^{m-|\sigma|}\frac{(2^j R)^n}{|2^{j+1}B|}\int_{2^{j+1}B}\left(\prod_{i=1}^m|b_i(z)-(b_i)_{2^{j+1}B}|^{\sigma_i}\right)\times\\
	&\qquad\times|K(y-z)-K(x_B-z)|\mathcal{X}_{A_j}(z)|f_2(z)|\,dz\\
	&\leq C\sum_{\sigma\in S_m} \sum_{j=1}^\infty j^{m-|\sigma|}\left[(2^jR)^n \left(\prod_{i:\sigma_i=1} \|b_i-(b_i)_{2^{j+1}B}\|_{\mathrm{exp}\,L^{r_i}, 2^{j+1}B}\right)\|f\|_{\bar \eta, 2^{j+1}B}\right. \times\\
	&\quad\left. \phantom{\prod_{i=1}^m}\times\|K(\cdot - (x_B-y)-K(\cdot))\|_{\phi, |z|\sim 2^jR}\right]\\
	&\leq CM_{\bar \eta}f(x),
	\end{align*}
	since $K\in H_{\phi,m}$ implies $K\in H_{\phi,m-|\sigma|}$.
	By combining these estimates we get 
	\[M^{\#}_\delta(T_\mathbf{b}f)(x)\leq CM_{\bar\eta}f(x)+C\sum_{\sigma\in S_m,\, |\sigma|<m}M_{\varepsilon}(T_{\mathbf{b}_\sigma}f)(x).\qedhere\]
\end{proof}

The following result contains a Coifman type estimate involving $T_\mathbf{b}$. The proof can be performed by using the proposition above and following similar arguments as in Theorem 3.3 in \cite{LRdlT}. We shall omit the details.

\begin{teo}\label{teo: desigualdad tipo Coifman para Tb^m Hormander}
	Let $m\in\mathbb{N}$, $r_i\geq 1$ for every $1\leq i\leq m$ and $1/r=\sum_{i=1}^m 1/r_i$. Let $\eta$ and $\phi$ be Young functions that verify $\bar{\eta}^{-1}(t)\phi^{-1}(t)(\mathrm{log}\, t)^{1/r}\lesssim t$, for $t\geq e$. Let $T$ be an operator with kernel $K\in H_{\phi,m}$ and $\mathbf{b}=(b_1,b_2,\dots,b_m)$ where $b_i\in \mathrm{Osc}_{\mathrm{exp}\,L^{r_i}}$, for $1\leq i\leq m$. Then for every $0<p<\infty$ and $w\in A_\infty$ there exists a positive constant $C$ such that the inequality
	\[\int_{\mathbb{R}^n}|T_\mathbf{b}f(x)|^pw(x)\,dx\leq C\|\mathbf{b}\|^p\int_{\mathbb{R}^n}(M_{\bar\eta}f(x))^pw(x)\,dx\]
	holds for every bounded function $f$ with compact support, provided the left-hand side is finite. 
\end{teo}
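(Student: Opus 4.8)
The plan is to argue by induction on the number $m$ of components of $\mathbf{b}$, along the lines of Theorem~3.3 in \cite{LRdlT}, with Proposition~\ref{propo: M sharp delta en terminos de conmutadores de orden menor} as the main tool. The base case $m=0$, in which $T_\mathbf{b}=T$, is the classical Coifman estimate $\int_{\mathbb{R}^n}|Tf|^pw\le C\int_{\mathbb{R}^n}(M_{\bar\eta}f)^pw$, valid for every $w\in A_\infty$ and $0<p<\infty$ once $K\in H_\eta$; this applies here because $K\in H_{\phi,m}\subset H_\phi$ and the hypothesis $\bar{\eta}^{-1}(t)\phi^{-1}(t)(\log t)^{1/r}\lesssim t$ forces, a fortiori, $\bar{\eta}^{-1}(t)\phi^{-1}(t)\lesssim t$ for $t\ge e$, hence $K\in H_\eta$. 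So I would fix $m\ge 1$, assume the statement for every symbol with fewer than $m$ components, and, by homogeneity exactly as in the proof of Proposition~\ref{propo: M sharp delta en terminos de conmutadores de orden menor}, normalize $\|b_i\|_{\mathrm{Osc}_{\mathrm{exp}\,L^{r_i}}}=1$ for all $i$, so that $\|\mathbf{b}\|=\|\mathbf{b}_\sigma\|=1$ for every $\sigma\in S_m$; the general statement is then recovered by rescaling $f$.

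For the inductive step I would fix $w\in A_\infty$, pick $q\ge 1$ with $w\in A_q$, and choose $0<\delta<\varepsilon<\min\{1,p/q\}$, so that $w\in A_{p/\varepsilon}$ and $M$ is bounded on $L^{p/\varepsilon}(w)$. Writing $M_\delta g=(M(|g|^\delta))^{1/\delta}$, the local integrability of $T_\mathbf{b}f$ gives $|T_\mathbf{b}f|\le M_\delta(T_\mathbf{b}f)$ a.e.\ by Lebesgue differentiation, so applying \eqref{eq: desigualdad de Fefferman-Stein} to $|T_\mathbf{b}f|^\delta$ with exponent $p/\delta$ (licit because $\int|T_\mathbf{b}f|^pw<\infty$) yields
\[\int_{\mathbb{R}^n}|T_\mathbf{b}f|^pw\le\int_{\mathbb{R}^n}\bigl(M_\delta(T_\mathbf{b}f)\bigr)^pw\le C\int_{\mathbb{R}^n}\bigl(M^{\#}_\delta(T_\mathbf{b}f)\bigr)^pw.\]
Inserting the pointwise bound of Proposition~\ref{propo: M sharp delta en terminos de conmutadores de orden menor} (with the same $\eta,\phi$ and the chosen $\delta,\varepsilon$) gives
\[\int_{\mathbb{R}^n}|T_\mathbf{b}f|^pw\le C\int_{\mathbb{R}^n}(M_{\bar\eta}f)^pw+C\!\!\sum_{\sigma\in S_m,\,|\sigma|<m}\int_{\mathbb{R}^n}\bigl(M_{\varepsilon}(T_{\mathbf{b}_\sigma}f)\bigr)^pw.\]
For each such $\sigma$, the boundedness of $M$ on $L^{p/\varepsilon}(w)$ gives $\int(M_{\varepsilon}(T_{\mathbf{b}_\sigma}f))^pw\le C\int|T_{\mathbf{b}_\sigma}f|^pw$, and the inductive hypothesis applies to $T_{\mathbf{b}_\sigma}$: it has $|\sigma|<m$ components, $K\in H_{\phi,m}\subset H_{\phi,|\sigma|}$, and since $1/r_\sigma=\sum_{i:\sigma_i=1}1/r_i\le 1/r$ one has $\bar{\eta}^{-1}(t)\phi^{-1}(t)(\log t)^{1/r_\sigma}\le\bar{\eta}^{-1}(t)\phi^{-1}(t)(\log t)^{1/r}\lesssim t$ for $t\ge e$, so $(\eta,\phi)$ is admissible for $\mathbf{b}_\sigma$ as well. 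Hence $\int|T_{\mathbf{b}_\sigma}f|^pw\le C\int(M_{\bar\eta}f)^pw$, and summing over the finitely many $\sigma$ closes the induction; undoing the normalization restores the factor $\|\mathbf{b}\|^p$.

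The main obstacle is the \emph{a priori finiteness} needed to justify the two displays: one must know that $\int(M_\delta(T_\mathbf{b}f))^pw<\infty$ before invoking \eqref{eq: desigualdad de Fefferman-Stein}, and that $\int|T_{\mathbf{b}_\sigma}f|^pw<\infty$ for each $\sigma$ before invoking the inductive hypothesis. For bounded $f$ with compact support this is handled, as in Theorem~3.3 of \cite{LRdlT}, by truncating the kernel $K$ near the diagonal and at infinity: the truncated operators and all their commutators are a priori in $L^p(w)$, the whole argument runs uniformly in the truncation parameter, and one passes to the limit by Fatou's lemma. A minor companion point—the a.e.\ finiteness of $M^{\#}_\delta(T_\mathbf{b}f)$ and of the dominating maximal operators—follows likewise from the local integrability of the (truncated) commutators. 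I would only indicate these standard reductions and omit the routine verification.
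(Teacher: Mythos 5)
Your proposal is correct and is exactly the argument the paper has in mind: the paper explicitly declines to write it out, saying the result ``can be performed by using the proposition above and following similar arguments as in Theorem 3.3 in \cite{LRdlT},'' and your write-up is precisely that induction on $m$ via Fefferman--Stein, Proposition~\ref{propo: M sharp delta en terminos de conmutadores de orden menor}, and the boundedness of $M$ on $L^{p/\varepsilon}(w)$, with the standard truncation argument handling the a priori finiteness. The one point worth making explicit is the one you already noted: $1/r_\sigma\le 1/r$ and $H_{\phi,m}\subset H_{\phi,|\sigma|}$ guarantee that the pair $(\eta,\phi)$ remains admissible for every lower-order $\mathbf{b}_\sigma$, so the induction closes.
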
	
When we consider a single symbol in BMO, the result above was proved in \cite{LRdlT}. The corresponding result for commutators of CZO  was obtained in \cite{Perez_Sharp97}.

Now we are in a position to give the proof of the strong $(p,p)$ type for $T_\mathbf{b}$.

\begin{proof}[Proof of Theorem~\ref{teo: tipo fuerte de Tb Hormander}]
	We start again by assuming that $\|b_i\|_{\mathrm{Osc}_{\mathrm{exp}\,L^{r_i}}}=1$ for each $1\leq i\leq m$. It is well known under the hypotheses of this theorem (see \cite{B-D-P}, Theorem 2.5) we have that
	\[\|(M_{\bar\eta}f)w\|_{L^p}\leq C\|fw\|_{L^p}.\]
	The conclusion follows immediately from Theorem~\ref{teo: desigualdad tipo Coifman para Tb^m Hormander} applied with $w^p$.
\end{proof}

We close this section with the proof of mixed inequalities for $T_\mathbf{b}$. We shall need a mixed inequality involving the operator $T$, which was set and proved in \cite{Berra-Carena-Pradolini(M)}.	

\begin{teo}\label{teo: desigualdad mixta para T Hormander}
	Let $1<q<\infty$ and $q^2/(2q-1)<\beta<q$. Assume that $w\in A_1\cap \mathrm{RH}_s$ for some $s>1$ and $v^{\alpha}\in A_{(q/\beta)'}(u)$, where $\alpha=\beta(q-1)/(q-\beta)$. Let $T$ be an operator with kernel $K\in H_\phi$, where $\phi$ is a Young function that verifies $\bar{\phi}\in B_\rho$ for every $\rho\geq \min\{\beta,s\}$. Then there exists $C>0$ such that the inequality
	\[uv\left(\left\{x\in\mathbb{R}^n: \left|\frac{T(fv)(x)}{v(x)}\right|>t\right\}\right)\leq \frac{C}{t}\int_{\mathbb{R}^n} |f(x)|u(x)v(x)\,dx\]
\end{teo}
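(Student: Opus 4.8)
The plan is to adapt the Calder\'on--Zygmund decomposition scheme used in \cite{CruzUribe-Martell-Perez} and \cite{Berra-Carena-Pradolini(M)} for the Calder\'on--Zygmund setting, replacing the pointwise kernel regularity by the $L^\phi$--H\"ormander condition \eqref{eq: condicion_hormander_phi} and the weighted $L^p$ bounds for $T$ by the Coifman type inequality of Theorem~\ref{teo: desigualdad tipo Coifman para Tb^m Hormander}. First I would reduce to $f\ge 0$ bounded with compact support and fix $t>0$. The hypotheses yield $v\in A_\infty$ and, through Lemma~\ref{lema: u en A1 y v en Ap(u) implica uv en Ap}, $uv^\alpha\in A_\infty$, hence also $uv\in A_\infty$; in particular $v\,dx$ is doubling, so I can run its Calder\'on--Zygmund decomposition of $f$ at height $t$, obtaining disjoint dyadic cubes $\{Q_j\}$ with $t<\tfrac1{v(Q_j)}\int_{Q_j}fv\le C_nt$, split $f=g+h$ with $h=\sum_j h_j$, $h_j=(f-f_{Q_j}^v)\mathcal{X}_{Q_j}$, $\int_{Q_j}h_jv=0$, and set $\Omega^*=\bigcup_j 3Q_j$. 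This gives
\[uv\Big(\Big\{\tfrac{|T(fv)|}{v}>t\Big\}\Big)\le uv\Big(\Big\{\tfrac{|T(gv)|}{v}>\tfrac t2\Big\}\Big)+uv(\Omega^*)+uv\Big(\Big\{x\notin\Omega^*:\tfrac{|T(hv)(x)|}{v(x)}>\tfrac t2\Big\}\Big)=I+II+III.\]
The term $II=uv(\Omega^*)$ is disposed of at once: by the doubling of $uv$ and Lemma~\ref{lema: lema fundamental}, $uv(3Q_j)\lesssim uv(Q_j)\lesssim(\inf_{Q_j}u)v(Q_j)\le\tfrac1t(\inf_{Q_j}u)\int_{Q_j}fv\le\tfrac1t\int_{Q_j}fuv$, and summing gives $II\lesssim\tfrac1t\int_{\R^n}fuv$.

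For $III$ I would use that when $x\notin 3Q_j$ the representation \eqref{eq: representacion integral del T} applies to $h_jv$ and the cancellation $\int_{Q_j}h_jv=0$ lets me subtract $K(x-x_{Q_j})$, $x_{Q_j}$ being the center of $Q_j$, so that $III\lesssim\tfrac1t\sum_j\int_{Q_j}|h_j(y)|v(y)F_j(y)\,dy$ with $F_j(y)=\int_{(3Q_j)^c}|K(x-y)-K(x-x_{Q_j})|u(x)\,dx$. Splitting this integral over the dyadic annuli centered at $x_{Q_j}$, applying the generalized H\"older inequality \eqref{eq: holder generalizada} with $\phi$ and $\bar\phi$ on each annulus, the $L^\phi$--H\"ormander condition \eqref{eq: condicion_hormander_phi}, and the pointwise estimate $M_{\bar\phi}u\lesssim u$ of Lemma~\ref{lema: control puntual de MPhi u por u} (available because $u\in A_1\cap\mathrm{RH}_s$ and $\bar\phi\in B_s$, which holds since $\bar\phi\in B_\rho$ for every $\rho\ge\min\{\beta,s\}$), I expect to get $F_j(y)\lesssim u(y)$ for $y\in Q_j$. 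Since $|h_j|\le f+f_{Q_j}^v$ on $Q_j$, the bound $\int_{Q_j}|h_j|uv\lesssim\int_{Q_j}fuv$ follows just as in the treatment of $II$, and $III\lesssim\tfrac1t\int_{\R^n}fuv$.

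The term $I$ is the heart of the argument. The Calder\'on--Zygmund selection together with the Lebesgue differentiation theorem for $v\,dx$ give $g\le Ct$ a.e., so Tchebyshev's inequality with exponent $q$ yields $I\lesssim t^{-q}\int_{\R^n}|T(gv)|^q\,uv^{1-q}$. The decisive point is that $T$ is bounded on $L^q(uv^{1-q})$: this I would obtain by combining Theorem~\ref{teo: desigualdad tipo Coifman para Tb^m Hormander} (with no symbol) with the weighted estimate $\|(M_{\bar\phi}F)w\|_{L^q}\lesssim\|Fw\|_{L^q}$, valid when $\bar\phi\in B_\rho$ for some $\rho>\beta$ and $w^q\in A_{q/\beta}$ (see \cite{B-D-P}, as in the proof of Theorem~\ref{teo: tipo fuerte de Tb Hormander}), applied with $w^q=uv^{1-q}$. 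Granting $uv^{1-q}\in A_{q/\beta}$, it follows that $\int_{\R^n}|T(gv)|^q\,uv^{1-q}\lesssim\int_{\R^n}g^q\,uv\lesssim t^{q-1}\int_{\R^n}g\,uv\lesssim t^{q-1}\int_{\R^n}fuv$, where the last inequality is the estimate of $II$ applied to $g$; hence $I\lesssim\tfrac1t\int_{\R^n}fuv$, and adding the three bounds finishes the proof.

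I expect the genuine obstacle to be not the decomposition itself but the choice of exponent $q$ and the verification that $uv^{1-q}\in A_{q/\beta}$, which is exactly where the relation $\alpha=\beta(q-1)/(q-\beta)$ and the range $q^2/(2q-1)<\beta<q$ are needed. Writing $\langle\psi\rangle_{u,Q}=\tfrac1{u(Q)}\int_Q\psi\,u$ and using $(q/\beta)'-1=\beta/(q-\beta)$, $\alpha(q-\beta)/\beta=q-1$, the hypothesis $v^\alpha\in A_{(q/\beta)'}(u)$ is precisely $\langle v^\alpha\rangle_{u,Q}\,\langle v^{1-q}\rangle_{u,Q}^{\beta/(q-\beta)}\lesssim 1$ for every cube $Q$. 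On the other hand, since $u\in A_1$ one has $\tfrac{u(Q)}{|Q|}\approx\inf_Q u\le u(x)$ on $Q$, whence the exact identity $\tfrac1{|Q|}\int_Q uv^{1-q}=\tfrac{u(Q)}{|Q|}\langle v^{1-q}\rangle_{u,Q}$ and the bound $\tfrac1{|Q|}\int_Q u^{-\beta/(q-\beta)}v^\alpha\lesssim\big(\tfrac{u(Q)}{|Q|}\big)^{-\beta/(q-\beta)}\langle v^\alpha\rangle_{u,Q}$; inserting these into the $A_{q/\beta}$ quotient the powers of $u(Q)/|Q|$ cancel and one is left with $\langle v^{1-q}\rangle_{u,Q}\,\langle v^\alpha\rangle_{u,Q}^{(q-\beta)/\beta}\lesssim 1$, which is the displayed inequality raised to the power $(q-\beta)/\beta$. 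The range $q^2/(2q-1)<\beta<q$ guarantees in addition that $q>\beta$ (so $q/\beta>1$ and $q$ is an admissible exponent for $M_{\bar\phi}$) and that $\alpha\ge 1$, so that $uv^\alpha\in A_\infty$ and all the maximal and reverse H\"older tools used above apply.
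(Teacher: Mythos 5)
The paper itself does not prove Theorem~\ref{teo: desigualdad mixta para T Hormander}; it simply imports it from \cite{Berra-Carena-Pradolini(M)} with the sentence ``which was set and proved in \cite{Berra-Carena-Pradolini(M)}''. So there is no in-paper proof to compare against, but your scheme does mirror, in the $m=0$ case, the argument the paper carries out for the multilinear Theorem~\ref{teo: desigualdad mixta para Tb multilineal Hormander}: Calder\'on--Zygmund decomposition with respect to $v\,dx$, Tchebyshev with exponent $q$ and strong $(q,q)$ boundedness of $T$ on $L^q(uv^{1-q})$ for $I$, doubling of $uv$ plus Lemma~\ref{lema: lema fundamental} for $II$, and annuli plus generalized H\"older and Lemma~\ref{lema: control puntual de MPhi u por u} for $III$. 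Your algebraic verification that $uv^{1-q}\in A_{q/\beta}$ is correct; the paper instead observes that $v^\alpha\in A_{(q/\beta)'}(u)$ is equivalent by duality to $v^{1-q}\in A_{q/\beta}(u)$ and then invokes Lemma~\ref{lema: u en A1 y v en Ap(u) implica uv en Ap}, which is shorter but equivalent.

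Two small points should be repaired. First, you set $\Omega^*=\bigcup_j 3Q_j$, but the $L^\phi$--H\"ormander condition \eqref{eq: condicion_hormander_phi} comes with a constant $c\geq 1$ and only gives the annulus estimate for $R>c|y-x_{Q_j}|$; with $y\in Q_j$ this forces the excised region around $Q_j$ to scale like $c\sqrt{n}\,\ell(Q_j)$. The paper's proof of Theorem~\ref{teo: desigualdad mixta para Tb multilineal Hormander} accordingly takes $Q_j^*=2c\sqrt{n}Q_j$; a fixed dilate of $3$ is not enough when $c>1$. Second, the strong $(q,q)$ inequality coming from Theorem~\ref{teo: desigualdad tipo Coifman para Tb^m Hormander} (or its $m=0$ analogue in \cite{LRdlT}) is \emph{conditional}: it holds ``provided the left-hand side is finite''. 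So before applying it to $g v$ you need an a priori qualitative argument that $\|(T(gv))w\|_{L^q}<\infty$, as the paper does (first assuming $w^q$ and the symbols bounded, then passing to the limit). Also, for the $m=0$ Coifman inequality you should cite \cite{LRdlT} directly rather than Theorem~\ref{teo: desigualdad tipo Coifman para Tb^m Hormander}, whose statement takes $m\in\mathbb{N}$, i.e.\ $m\geq 1$. None of these affects the overall strategy, which is the right one.
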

holds for every $t>0$.

\begin{proof}[Proof of Theorem~\ref{teo: desigualdad mixta para Tb multilineal Hormander}]
	We proceed by induction on $m$. When $m=1$ we have a symbol with a single component; the proof can be achieved by following the corresponding version of the $\mathrm{BMO}$ case (\cite{Berra-Carena-Pradolini(M)}, Theorem 4). Assume now that the result holds for any symbol with $k$ components and let us prove it for $\mathbf{b}=(b_1,b_2,\dots, b_{k+1})$. Also assume without loss of generality that $f$ is bounded with compact support and that $\|b_i\|_{\mathrm{Osc}_{\mathrm{exp}\, L^{r_i}}}=1$ for each $1\leq i\leq k+1$. Since $\alpha>1$ we have $v\in A_{(q/\beta)'}(u)\subset A_\infty(u)$, and this implies that $d\mu(x)=v(x)\,dx$ is doubling. We perform the Calderón-Zygmund decomposition of $f$ at height $t>0$, with respect to $v$, obtaining a disjoint collection of dyadic cubes $\{Q_j\}_j$ that verify
	\[t<f_{Q_j}^v\leq Ct,\]
	for each $j$.  We also split $f=g+h$. Let $Q_j^*=2c\sqrt{n}Q_j$, where $c=\min\{c_{\bar\eta},c_\phi\}$ and these constants are the parameters appearing in the conditions $H_{\phi,m}$ and $H_{\bar\eta}$ for $K$.  If $\Omega^*=\bigcup_j Q_j^*$, then 
	
	\begin{align*}
	uv\left(\left\{x: \frac{|T_\mathbf{b}(fv)(x)|}{v(x)}>t\right\}\right)& \leq uv\left(\left\{x: \frac{|T_\mathbf{b}(gv)(x)|}{v(x)}>\frac{t}{2}\right\}\right)+uv(\Omega^{*})\\
	&\quad +
	uv\left(\left\{x\in \mathbb{R}^n\backslash{\Omega^*}: \frac{|T_\mathbf{b}(hv)(x)|}{v(x)}>\frac{t}{2}\right\}\right)\\
	&=I+II+III.
	\end{align*}
	Notice that the hypothesis on $v$ yields $v^{\alpha(1-(q/\beta))}\in A_{q/\beta}(u)$, that is $v^{1-q}\in A_{q/\beta}(u)$ and therefore $w^q=uv^{1-q}\in A_{q/\beta}$. By applying Tchebychev inequality with $q$ and Theorem~\ref{teo: tipo fuerte de Tb Hormander} we get
	\begin{align*}
	I&\leq \frac{C}{t^q}\int_{\mathbb{R}^n}|T_\mathbf{b}(gv)(x)|^qw(x)^q\,dx\\
	&\leq \frac{C}{t^q}\int_{\mathbb{R}^n}|g(x)v(x)|^qw(x)^q\,dx\\
	&=\frac{C}{t^q}\int_{\mathbb{R}^n}|g|^qu(x)v(x)\,dx,
	\end{align*}
	provided $\|(T_\mathbf{b}(gv))w\|_{L^q}$ is finite. From this point the estimate follows exactly as in page~\pageref{pag: estimacion de I}.
	
	Let us prove that $\|(T_\mathbf{b}(gv))w\|_{L^q}$ is indeed finite. As a first step assume that $w^q$ and every component function of $\mathbf{b}$ are in $L^\infty$. Inequality~\ref{eq: representacion de Tb con b multilineal} implies that
	\[|T_\mathbf{b}(gv)(x)|\leq \sum_{\sigma\in S_{k+1}} \left(\prod_{i=1}^{k+1}\left|b_i^{\sigma_i}(x)\right|\right)\left|T\left(\left(\prod_{i=1}^{k+1}b_i^{\bar{\sigma}_i}\right)gv\right)(x)\right|,\]
	and using the strong $(q,q)$ type of $T$ we obtain that
	\begin{align*}
	\int_{\mathbb{R}^n}|T_{\mathbf{b}}(gv)(x)|^qw^q(x)\,dx&\leq C\|w^q\|_{L^\infty}\left(\prod_{i=1}^{k+1}\|b_i\|_{L^\infty}^q\right)\int_{\mathbb{R}^n}|g(x)|^qv^q(x)\,dx\\
	&\leq C\|w^q\|_{L^\infty}\left(\prod_{i=1}^{k+1}\|b_i\|_{L^\infty}^q\right)\|g\|_{L^\infty}^q\int_{Q_0}v^q(x)\,dx\\
	&<\infty,
	\end{align*}
	since condition $\beta>q^2/(2q-1)$ implies that $v^q$ is a weight and, consequently, locally integrable. The additional assumption that $w^q$ and every $b_i$ are in $L^\infty$ can be removed by following standard arguments similarly as we did in the proof of Theorem~\ref{teo: desigualdad tipo Coifman para Tb^m Hormander}. 
	
	The estimate of $II$ follows exactly as in page~\pageref{pag: estimacion de II}. For $III$ we proceed as in the proof of Theorem~\ref{teo: desigualdad mixta para Tb caso CZO} in page~\pageref{pag: estimacion de III caso general} to get
	\[III\leq I_1+I_2+I_3.\]

	For $I_1$, as we did in page~\pageref{pag: estimacion de I_1 caso general}, we get
	\[I_1\leq \sum_j \int_{Q_j}|h_j(y)|v(y)F_{j,k}(y)\,dy.\]
	We shall prove that $F_{j,k}(y)\leq Cu(y)$, for every $y\in Q_j$ and with $C$ independent of $j$. 
	By setting $b_{i,j}^{\ell}=|2^{\ell}Q_j|^{-1}\int_{2^{\ell}Q_j}b_i$. By Lemmas~\ref{lema: promedios sobre cubos dilatados de b acotada por norma osc} and~\ref{lema: producto de sumas} we can conclude that
	\begin{align*}
	\prod_{i=1}^{k+1}|b_i(x)-b_{i,j}|&= \sum_{\sigma\in S_{k+1}}\prod_{i=1}^{k+1}\left|b_i-b_{i,j}^{\ell+1}\right|^{\sigma_i}\left|b_{i,j}^{\ell+1}-b_{i,j}\right|^{\bar \sigma_i}\\
	&\leq C(\ell+1)^{k+1}+\sum_{\sigma\in S_{k+1},\,|\sigma|>0}(\ell+1)^{k+1-|\sigma|}\prod_{i: \sigma_i=1}\left|b_i-b_{i,j}^{\ell+1}\right|.
	\end{align*}
	Let $A_{j,\ell}=\{x: 2^{\ell-1}r_j<|x-x_{Q_j}|\leq 2^\ell r_j\}$, where $r_j=c\sqrt{n}\ell(Q_j)$. Then we have that 	\begin{align*}
	F_{j.k}(y)&\leq C\sum_{\ell=0}^\infty(\ell+1)^{k+1}\int_{A_{j,\ell}}|K(x-y)-K(x-x_{Q_j})|u(x)\,dx\\
	&\quad +\sum_{\sigma\in S_{k+1},\,|\sigma|>0}\sum_{\ell=0}^\infty(\ell+1)^{k+1-|\sigma|}\int_{A_{j,\ell}}\prod_{i:\sigma_i=1} \left|b_i(x)-b_{i,j}^{\ell+1}\right||K(x-y)-K(x-x_{Q_j})|u(x)\,dx\\
	&=F_{j,k}^1(y)+\sum_{\sigma\in S_{k+1},\,|\sigma|>0} F_{j,k}^\sigma(y).
	\end{align*}
	
	Let $B_j^\ell =B(x_{Q_j}, 2^{\ell}r_j)$. Recall that from Remark~\ref{obs: H_{phi,m} implica H_{eta,m}} we have that $K\in H_{\eta,k+1}$. Then we apply generalized Hölder inequality with functions $\eta$ and $\bar\eta$ to get
	\begin{align*}
	F_{j,k}^1(y)&\leq C\sum_{\ell=0}^\infty\ell^{k+1}(2^{\ell}r_j)^n\|(K(\cdot-y)-K(\cdot-x_{Q_j}))\mathcal{X}_{A_{j,\ell}}\|_{\eta, B_j^\ell}\|u\|_{\bar\eta, B_j^\ell}\\
	&=C\sum_{\ell=0}^\infty\ell^{k+1}(2^{\ell}r_j)^n\|K(\cdot-y)-K(\cdot-x_{Q_j})\|_{\eta, |x|\sim 2^{\ell}r_j}\|u\|_{\bar\eta, B_j^\ell}\\
	&\leq C_\eta M_{\bar\eta}u(y)\\
	&\leq Cu(y),
	\end{align*}
	by virtue of Lemma~\ref{lema: control puntual de MPhi u por u}.
	
	On the other hand, $K\in H_{\phi,k+1}$ implies $K\in H_{\phi,l}$ for every $0\leq l\leq k+1$. Fixed $\sigma\in S_{k+1}$, we apply generalized Hölder inequality with functions $\varphi_i(t)=e^{t^{r_i}}-1$, $1\leq i\leq k+1$, $\phi$ and $\bar\eta$, since
	\[\bar{\eta}^{-1}(t)\phi^{-1}(t)\prod_{i:\sigma_i=1}\left(\log\,t\right)^{1/r_i}=\bar{\eta}^{-1}(t)\phi^{-1}(t)\left(\log\,t\right)^{1/r_\sigma}\leq \bar{\eta}^{-1}(t)\phi^{-1}(t)\left(\log\,t\right)^{1/r}\leq t.\]
	
	Therefore,
	\[F_{j,k}^\sigma(y)\leq C\sum_{\ell=0}^\infty \ell^{k+1-|\sigma|}(2^\ell r_j)^n\left(\prod_{i:\sigma_i=1} \|b_i-b_{i,j}^{\ell+1}\|_{\mathrm{exp}\,L^{r_i}, B_j^\ell}\right)\|K(\cdot-y)-K(\cdot-x_{Q_j})\|_{\phi, |x|\sim 2^{\ell}r_j}\|u\|_{\bar\eta, B_j^\ell}.\]
	Notice that $B_j^\ell\subset 2^{\ell+1} Q_j^*$. Let $n_0$ be the smallest integer that verifies $2^{n_0}\geq 2\sqrt{n}c$, then we have that $2^{\ell+1}Q_j^*\subset 2^{n_0+\ell+1}Q_j$. By virtue of Lemma~\ref{lema: promedios sobre cubos dilatados de b acotada por norma osc} we get
	\begin{align*}
	\prod_{i:\sigma_i=1} \left\|b_i-b_{i,j}^{\ell+1}\right\|_{\mathrm{exp}\,L^{r_i}, B_j^\ell}&\leq \prod_{i:\sigma_i=1}\left\|b_i-b_{i,j}^{\ell+1}\right\|_{\mathrm{exp}\,L^{r_i}, 2^{\ell+n_0+1}Q_j}\\
	&\lesssim \prod_{i:\sigma_i=1}\left(\left\|b_i-b_{i,j}^{\ell+n_0+1}\right\|_{\mathrm{exp}\,L^{r_i}, 2^{\ell+n_0+1}Q_j}+\left\|b_{i,j}^{\ell+n_0+1}-b_{i,j}^{\ell+1}\right\|_{\mathrm{exp}\,L^{r_i}, 2^{\ell+n_0+1}Q_j}\right)\\
	&\leq \prod_{i:\sigma_i=1}\left(\left\|b_i-b_{i,j}^{\ell+n_0+1}\right\|_{\mathrm{exp}\,L^{r_i}, 2^{\ell+n_0+1}Q_j}+Cn_0\right)\\
	&\leq C.
	\end{align*}
	Consequently,
	\begin{align*}
	F_{j,k}^\sigma(y)&\leq C\sum_{\ell=0}^\infty \ell^{k+1-|\sigma|}(2^\ell r_j)^n\|K(\cdot-y)-K(\cdot-x_{Q_j})\|_{\phi, |x|\sim 2^{\ell}r_j}\|u\|_{\bar\eta, B_j^\ell}\\
	&\leq CM_{\bar\eta} u(y)\sum_{\ell=0}^\infty \ell^{k+1-|\sigma|}(2^\ell r_j)^n\|K(\cdot-y)-K(\cdot-x_{Q_j})\|_{\phi, |x|\sim 2^{\ell}r_j}\\
	&\leq C_{k,\phi}M_{\bar\eta} u(y)\\
	&\leq C u(y),
	\end{align*}
	by Lemma~\ref{lema: control puntual de MPhi u por u} again. From these two estimates it follows that  $F_{j,k}(y)\leq Cu(y)$. Then
	\[I_1\leq C\sum_j\int_{Q_j}|h_j(y)|u(y)v(y)\,dy,\]
	and from this point we can continue the estimate in the same way as in page~\pageref{pag: estimacion de integral de h_j u v}.
	
	We can achieve the estimate of $I_2$ by applying Theorem~\ref{teo: desigualdad mixta para T Hormander} and proceeding as in page \pageref{pag: estimacion de I2}.
	
	We conclude with the estimate of $I_3$. Since $K\in H_\phi\cap H_{\eta,k+1}$ we have that $K\in H_\phi\cap H_{\eta,l}$ for each $1\leq l\leq k+1$. Fix $\sigma\in S_{k+1}$ such that $0<|\sigma|<k+1$. Then, by applying the inductive hypothesis we get
	\[uv\left(\left\{x\in \mathbb{R}^n\backslash{\Omega^*}: \frac{\left|T_{\mathbf b_\sigma}\left(\sum_j\left(\prod_{i=1}^{k+1}(b_i-\lambda_{i,j})^{\bar\sigma_i}\right)h_jv\right)(x)\right|}{v(x)}>\frac{t}{2^{k+2}}\right\}\right)\]
	is bounded by
	\[C\int_{\mathbb{R}^n}\Phi_\sigma\left(\frac{\sum_j \prod_{i=1}^{k+1}\left|b_i(x)-b_{i,j}\right|^{\bar{\sigma}_i}|h_j(x)|}{t}\right)u(x)v(x)\,dx,\]
	where $\Phi_\sigma$ is as in the proof of Theorem~\ref{teo: desigualdad mixta para Tb caso CZO}. Since $h_j$ is supported in $Q_j$, this last expression can be written as
	\begin{align*}
	I_3^1(\sigma)+I_3^2(\sigma)&=C\sum_j \int_{Q_j}\Phi_\sigma\left(\frac{\sum_j \prod_{i=1}^{k+1}\left|b_i(x)-b_{i,j}\right|^{\bar{\sigma}_i}f(x)}{t}\right)u(x)v(x)\,dx\\
	&\quad +\sum_j \int_{Q_j}\Phi_\sigma\left(\frac{\sum_j \prod_{i=1}^{k+1}\left|b_i(x)-b_{i,j}\right|^{\bar{\sigma}_i}f_{Q_j}^v(x)}{t}\right)u(x)v(x)\,dx.
	\end{align*}
	These two quantities can be estimated as in page~\pageref{pag: estimacion de I3 sigma}. This completes the proof.\qedhere	
\end{proof}

\begin{obs}
We want to point out that similar results than those contained in this article can be achieved by considering non necessarily convolution operators with the obvious changes in the hypothesis of the kernels (see for example the conditions on $K$ given in \cite{IFRR21}).
\end{obs}
\def\cprime{$'$}
\providecommand{\bysame}{\leavevmode\hbox to3em{\hrulefill}\thinspace}
\providecommand{\MR}{\relax\ifhmode\unskip\space\fi MR }
\providecommand{\MRhref}[2]{%
	\href{http://www.ams.org/mathscinet-getitem?mr=#1}{#2}
}
\providecommand{\href}[2]{#2}

\end{document}